\def\1{\bm{1}}
\def\vb{{\bm{b}}}
\def\ve{{\bm{e}}}
\def\vu{{\bm{u}}}
\def\vx{{\bm{x}}}
\def\eva{{a}}
\def\mJ{{\bm{J}}}
\def\mK{{\bm{K}}}
\def\mN{{\bm{N}}}
\def\mU{{\bm{U}}}
\def\mX{{\bm{X}}}
\def\mSigma{{\bm{\Sigma}}}
\DeclareMathAlphabet{\mathsfit}{\encodingdefault}{\sfdefault}{m}{sl}
\SetMathAlphabet{\mathsfit}{bold}{\encodingdefault}{\sfdefault}{bx}{n}
\def\gN{{\mathcal{N}}}
\def\gO{{\mathcal{O}}}
\def\gR{{\mathcal{R}}}
\def\gS{{\mathcal{S}}}
\def\sR{{\mathbb{R}}}
\DeclareMathOperator*{\argmin}{arg\,min}
\newtheorem{theorem}{Theorem}
\newtheorem{lemma}[theorem]{Lemma}
\newtheorem{proposition}[theorem]{Proposition}
\newtheorem{assumption}[theorem]{Assumption}
\newcommand{\scolor}[1]{{\color{magenta}#1}}
\title{Gradient descent in matrix factorization: 
Understanding large initialization}
\author[1]{Hengchao Chen}
\author[2]{Xin Chen}
\author[1]{Mohamad Elmasri}
\author[1,3]{\href{mailto:<qsunstats@gmail.com>}{Qiang Sun}}
\affil[1]{%
    University of Toronto
}
\affil[2]{%
    Princeton University
}
\affil[3]{%
    MBZUAI
}
\begin{document}

\maketitle

\begin{abstract}

Gradient Descent (GD) has been proven effective in solving various matrix factorization problems. However, its optimization behavior with large initial values remains less understood. To address this gap, this paper presents a novel theoretical framework for examining the convergence trajectory of GD with a large initialization. The framework is grounded in signal-to-noise ratio concepts and inductive arguments. The results uncover an implicit incremental learning phenomenon in GD and offer a deeper understanding of its performance in large initialization scenarios.
     
\end{abstract}

\section{Introduction}\label{sec:1}


Low-rank optimization is pivotal in various  applications, including matrix completion~\citep{candes2012exact}, phase retrieval~\citep{shechtman2015phase}, matrix sensing~\citep{recht2010guaranteed}, and others. One common approach to addressing low-rank problems is through convex relaxation techniques such as nuclear norm regularization \citep{recht2010guaranteed}.  While these methods offer robust statistical performance guarantees, they often incur substantial computational costs, potentially scaling cubically with matrix size \citep{chi2019nonconvex}.  
In response, recent studies \citep{chen2019gradient,ma2018implicit,chi2019nonconvex} have proposed employing matrix factorization, which naturally encodes low rankness and significantly reduces the computational expenses per iteration when using gradient descent. However, a primary concern remains: the non-convex nature of the resulting objective necessitates a thorough investigation into the convergence properties of gradient descent (GD).


Recent research has demonstrated that GD effectively solves a variety of low-rank problems. \cite{ma2018implicit} show that with a benign initialization\footnote{A benign initialization refers to starting the algorithm near the global minima.}, GD converges linearly to the global minima in applications such as matrix completion, phase retrieval, and blind deconvolution. Similarly, \cite{zhu2021global} explore the optimization landscape of matrix sensing, finding that GD, when starting with a benign initialization,  converges linearly to the global minima.
These findings highlight the intriguing optimization properties of GD in non-convex settings  and have catalyzed many studies in low-rank matrix optimization \citep{chi2019nonconvex}.

Unlike in convex optimization, initialization is critical in non-convex optimization. The aforementioned works establish global convergence  under the assumption of a benign initialization, necessitating carefully designed starting points. This prompts a critical question: Is random initialization\footnote{Random initialization selects an initial point randomly, typically far from the global minima.} sufficient to ensure fast global convergence? For certain problems, the answer is affirmative. \cite{chen2019gradient} show that with a random initialization, GD quickly converges globally in phase retrieval.  \cite{stoger2021small} show that GD with a small random initialization\footnote{Small or large initialization refers to whether the norm of the initial point is near zero or substantially larger, respectively.} also achieves rapid global convergence in matrix sensing. Subsequent studies have further examined GD with small random initialization in matrix sensing, addressing different aspects such as the asymmetric case \citep{jiang2023algorithmic} and the incremental learning phenomenon \citep{jin2023understanding}. These investigations, except \citet{chen2019gradient}, all assume a small initialization, which we  discuss below in detail.  


\begin{figure*}[t]
    \centering
    \includegraphics[width = \textwidth]{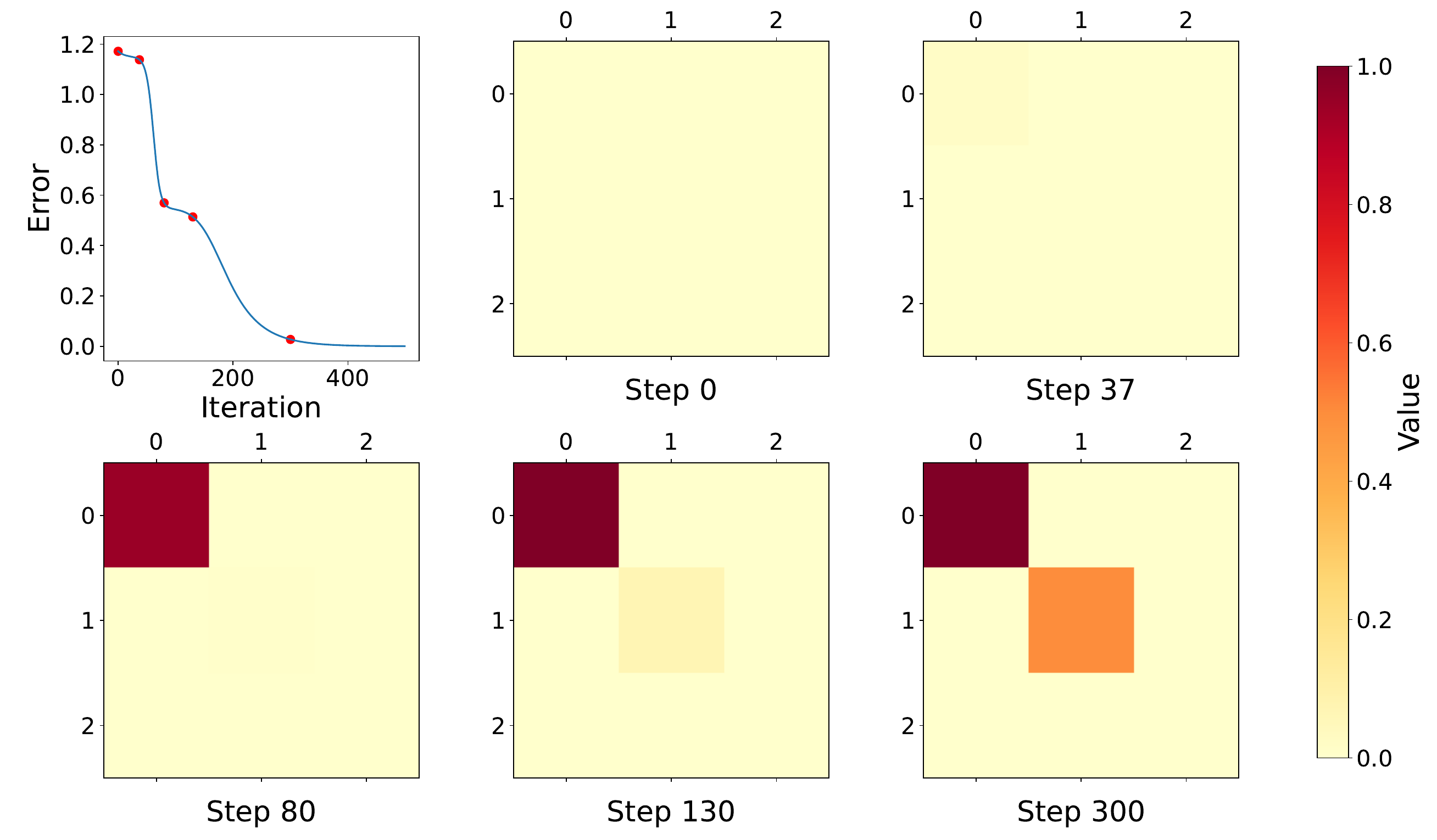}
    \caption{The top left panel shows the errors $\|\mSigma_r-\mX_t\mX_t^\top\|_{\rm F}$ over iterations. The rest panels  show the heat maps of the top three rows and columns of  $\mX_t\mX_t^\top$ at iterations $t=0,37,80,140$, and 300, corresponding to the red points in the top left panel.}
    \label{fig:1}
\end{figure*}

In this paper, we investigate the convergence properties of GD with a large initialization.  Using a large initialization often helps reduce the training time, and is widely adopted in neural network training \citep{sun2020optimization}. For simplicity, we limit our analysis to a population version\footnote{A brief introduction to the matrix sensing problem can be found  in  Appendix \ref{sec:apd:preliminary}. The difference between matrix sensing and its population version will also be discussed.} of the matrix sensing problem. Specifically, we aim to study the convergence properties  of GD when solving the following symmetric matrix factorization problem:
\begin{align}\label{equ:1}
    \mX^*=\argmin_{\mX\in\sR^{d\times r}}\|\mSigma-\mX\mX^\top\|_{\rm F}^2,
\end{align}
where $\mSigma\in\sR^{d\times d}$ is a positive semi-definite matrix of rank at least $r$. 
While it is straightforward to show that $\mX^*\mX^{*\top} = \mSigma_r$, with $\mSigma_r$ being the best rank-$r$ approximation of $\mSigma$, establishing the global convergence theory of GD in this problem is non-trivial. Let the initialization be: 
\$
\mX_0=\varpi\mN_0\in\RR^{d\times r},\ \textnormal{entries of}~\mN_0~\textnormal{being i.i.d.}~ \cN(0, {1}/{d}).
\$

Let $\kappa=\lambda_1/\Delta\geq 1$ be the conditional number, with  $\lambda_1$ being the largest eigenvalue of $\bSigma$ and $\Delta$ denoting  the eigengap.
Prior works typically assume a small initialization, 
aka $\varpi=d^{-\iota(\kappa)}$ for some positive  increasing function  $\iota(\cdot)$\footnote{We call $\bX_0$  a small initialization when $\varpi$ tends to zero as $d$ increases. This is because a standard statistical analysis shows that $\norm{\bX_0}_{\rm F} = \Theta(\varpi)$, which omits constants that are independent of $d$.}.  For instance, \cite{stoger2021small} require 
\#\label{equ:small}
\varpi\lesssim  d^{-3\kappa^2}, 
\#
and \cite{jin2023understanding} need a even smaller $\varpi$. Such $\varpi$ decays rapidly to zero as $d$  or $\kappa$ increases. In our study, we do not impose such conditions and only require $\varpi=O(1)$. We show that this requirement is necessary since GD will diverge when $\varpi$ is larger than this order. We refer to an initialization with $\varpi=O(1)$ as a large initialization, reflecting its relative magnitude in comparison to typical values considered in the field.  
Notably, our theories are applicable to the small initialization scenario as well, which is a special case of $\varpi = O(1)$.

\subsection{Our contribution}\label{sec:1.1}


Our main result is outlined in the informal theorem below, with its rigorous counterparts collected later. This result is established through a novel signal-to-noise ratio (SNR) based approach, combined with an inductive argument. 

\begin{theorem}[Informal] 
   Assume that $\bSigma$ is a positive semi-definite matrix with its top $r+1$ eigenvalues being distinct and arranged in descending order.  Let $\bX_t$ be the GD sequence for problem \eqref{equ:1} with  $\bX_0=\varpi\bN_0$, where $\varpi$ is a positive constant independent of $d$ and $\bN_0\in\RR^{d\times r}$ has independent $\cN(0,{1}/{d})$ entries. Then 
    \begin{enumerate}[leftmargin=*]
    \item The GD sequence converges to the global minima almost surely~\citep{lee2016gradient,zhu2021global};
    \item A comprehensive trajectory analysis of GD is provided, indicating that eigenvectors associated with larger eigenvalues are learned first;
    \item Under an additional assumption, GD achieves $\epsilon$-accuracy in $\cO(\log(1/\epsilon)+\log(d))$ steps. 
    \end{enumerate}
\end{theorem}

Property 1 is a direct consequence of prior works. It guarantees that our trajectory analysis is valid almost surely. Property 2 shows that the top eigenvectors can be learned first,  unaffected by the behavior of later  signals. This point will be clarified later. Property 3 describes the fast global convergence of GD under an additional assumption concerning the saddle point escaping property. The verification of this assumption requires new theoretical techniques, which we defer to future research.

Now we elucidate Property 2 and our theoretical analysis through  a simple yet representative example concerning  rank-two matrix approximation. The experiment is conducted with parameters: $d=4000$, $r=2$, and $\mSigma={\rm diag}(1,0.5, \ve)$, where $\ve\in\sR^{d-r}$ forms an arithmetic sequence decreasing from $0.3$  to $0$. The initialization matrix is set to  $\mX_0=0.5\mN_0$ with entries of $\mN_0$ independently sampled from $\gN(0,\frac{1}{d})$.  The GD iterations $\bX_t$ are computed using  a step size of 0.1. We evaluate errors using $\|\mSigma_r-\mX_t\mX_t^\top\|_{\rm F}$ with $\mSigma_r={\rm diag}(1,0.5,0,\ldots,0)$ representing  the best rank-$r$ matrix approximation to $\mSigma$. In Figure \ref{fig:1}, the error trajectory is plotted, highlighting key inflection points and featuring heat maps of the first three rows and columns of $\mX_t\mX_t^\top$ at these steps.  Observations indicate that GD undergoes an incremental learning process, characterized by error curves that exhibit both flat and steep segments. 


To interpret the error trajectory  in Figure \ref{fig:1}, we  analyze the first $r$ rows of $\bX_t$ individually. Specifically, we examine the dynamics of the quantities $\sigma_1(\vu_{k,t})$ and $\sigma_1(\vu_{k,t}\mK_{k,t}^\top)$, where $\vu_{k,t}$ is the $k$-th row of $\mX_t$ and $\mK_{k,t}$ is the $(k+1)$-to-$d$-th rows of $\mX_t$. 
These quantities correspond to the diagonal and off-diagonal elements in the heat map of $\mX_t\mX_t^\top$. Hence, our mathematical analysis directly explains the dynamics observed in the heat maps of Figure \ref{fig:1}. Our analysis on the SNR $\sigma_1^2(\vu_{k,t})/\sigma_1(\bu_{k,t}\bK_{k,t}^\top)$ reveals that the off-diagonal elements decrease at a geometric rate once the signal strength $\sigma_1^2(\vu_{k,t})$ reaches a specific threshold. This observation motivates us to employ an inductive argument to analyze the whole convergence trajectory.

Lastly, we highlight an ancillary contribution regarding the convergence of GD with a benign initialization. Prior works, such as~\cite{zhu2021global},  only prove the  global linear convergence of GD when $\bSigma$ is exactly of rank $r$. In contrast, our work extends these results by proving global linear convergence of GD for all matrix approximation problems, allowing for  $\bSigma$ whose rank exceeds $r$. Our analysis is  based on an SNR argument, which distinguishes  from the prior landscape-based analysis. Additionally, this analysis can be further applied to offer a new proof for the global linear convergence of benignly initialized GD  in matrix sensing problems.

\subsection{Related work}

Matrix factorization based low rank matrix optimization has received significant attention in recent years \citep{chi2019nonconvex}. A primary challenge involves  analyzing the optimization properties. Previous studies have approached  this issue through various angles, including examing the optimization landscape \citep{sun2016complete,sun2018geometric,zhu2021global} and directly conducting  convergence analyses \citep{ma2018implicit,chen2019gradient,stoger2021small}. 
\citet{lee2016gradient} show  that GD escapes saddle points almost surely under the strict saddle point condition, implying global convergence in scenarios where all local minima are also global minima, and all saddle points are strict.


In our paper, we refer to the implicit incremental learning phenomenon as the prioritized learning of the top eigenstructure. This phenomenon has been investigated from different perspectives. \citet{li2020towards} show that in matrix factorization, gradient flow with infinitesimal initialization is mathematically equivalent to greedy low-rank learning under specific assumptions. \cite{jin2023understanding} show that in matrix sensing, GD with a small initialization exhibits an incremental learning phenomenon. \citet{simon2023stepwise} observe that a similar incremental learning phenomenon exists in self-supervised learning when a small initialization is employed. However, none of these works explore the large initialization regime, which we will investigate for the first time in this paper.

\subsection{Paper overview}

The rest of this paper proceeds as follows. Section \ref{sec:2} reviews the usage of SNR analysis for rank-one matrix approximation. Section \ref{sec:3.1} uses the SNR analysis to prove the local linear convergence of GD in general rank problems. In Section \ref{sec:RI}, we examine the random initialization. Specifically, Section \ref{sec:3.2} reviews small initialization while Section \ref{sec:large} considers large initialization and presents  our main result. A sketch of proof is provided in Section \ref{sec:proof}.  
Concluding discussions are presented in Section \ref{sec:5} and all detailed proofs are collected in the Appendix.

\section{A warm-up}\label{sec:2_total}

In this section, we first introduce the  SNR analysis for rank-one matrix approximation problems as developed by \cite{chen2019gradient}. We then discuss the challenges involved in extending this analysis to the case of general rank. Despite these challenges, it is possible to extend the analysis with the use of a benign initialization. We provide such an extension in Theorem \ref{thm:1}, which generalizes  the previous results.

\subsection{Rank-one matrix approximation}\label{sec:2}
We begin with a review of rank-one matrix approximation. In this context, \citet{chen2019gradient} demonstrate  that GD with a large random initialization exhibits linear convergence to the global minima by leveraging an SNR argument. Specifically, consider problem \eqref{equ:1} with $r=1$ and assume\footnote{There is no loss of generality in assuming that $\bSigma$ is diagonal, as the analysis of  GD  is invariant to orthogonal rotations.  For a detailed explanation, please refer to Appendix \ref{sec:a2-apd}.} $\mSigma={\rm diag}(\lambda_1,\ldots,\lambda_d)$ is diagonal with decreasing diagonal elements and $\lambda_1>\lambda_2$. Let the initialization  vector  $\vx_0\in\sR^{d}$ be such that its first entry is  non-zero  and its  norm $\|\vx_0\|$ is less than $2\lambda_1$. Then  $\vx_t\vx_t^\top$ rapidly converges to ${\rm diag}(1,0,\ldots,0)$.  The vector  $\vx_t$ is updated according to the GD rule:
\begin{align}
    \vx_t=\vx_{t-1}+\eta (\mSigma-\vx_{t-1}\vx_{t-1}^\top)\vx_{t-1},
\end{align}
where $\eta$ is the learning rate.

\citet{chen2019gradient} first decompose $\vx_t$ as $\vx_{t}=(\eva_t,\vb_t)^\top$, where $\eva_t\in\sR$ and $\vb_t\in\sR^{d-1}$. The GD rule is then rewritten  as
\begin{align}
    \eva_t&=a_{t-1}+\eta\lambda_1a_{t-1}-\eta (a_{t-1}^2+\|\vb_{t-1}\|^2)a_{t-1},\\
    \vb_t&=\vb_{t-1}+\eta \mSigma_{\rm res}\vb_{t-1}-\eta (a_{t-1}^2+\|\vb_{t-1}\|^2)\vb_{t-1},
\end{align}
where $\mSigma_{\rm res}={\rm diag}(\lambda_2,\ldots,\lambda_d)$. Let $\alpha_t=|a_t|$ and $\beta_t=\|\vb_t\|$, and assume $\eta\lambda_1$ is smaller than some constant, say $\frac{1}{12}$. Then we have
\begin{align}
    \alpha_t&=(1+\eta\lambda_1-\eta\alpha_{t-1}^2-\eta\beta_{t-1}^2)\alpha_{t-1},\label{equ:5}\\
    \beta_t&\leq (1+\eta\lambda_2-\eta\alpha_{t-1}^2-\eta\beta_{t-1}^2)\beta_{t-1}.\label{equ:6}
\end{align}
By dividing \eqref{equ:6} by \eqref{equ:5}, it follows that
\begin{align}
    \frac{\beta_t}{\alpha_t}&\leq \frac{1+\eta\lambda_2-\eta \alpha_{t-1}^2-\eta\beta_{t-1}^2}{1+\eta\lambda_1-\eta \alpha_{t-1}^2-\eta\beta_{t-1}^2}\cdot\frac{\beta_{t-1}}{\alpha_{t-1}}\notag \\
    &\leq (1-\frac{\eta \Delta}{3})\cdot \frac{\beta_{t-1}}{\alpha_{t-1}},\label{equ:7}
\end{align}
where $\Delta=\lambda_1-\lambda_2$ is the eigengap and the second inequality uses the fact that for all $s\in[-1/2,1/2]$,
\begin{align}
    h(s)=\frac{1-\eta\Delta/2+s}{1+\eta\Delta/2+s}\leq h({1}/{2})\leq 1-\frac{\eta\Delta}{3}.
\end{align}
Inequality \eqref{equ:7} indicates that the ratio ${\beta_t}/{\alpha_t}$ decreases to zero geometrically fast. Using this result, \citet{chen2019gradient} establish that $\beta_t$ and $\alpha_t$ rapidly converge to zero and $\lambda_1$,  respectively. In our paper, we refer to this argument as an SNR analysis, designating $\alpha_t$ as the signal strength and $\beta_t$ as the noise strength.

\subsection{The general rank case: Challenges and a solution}\label{sec:3.1}

Generalizing the SNR argument to general rank problems introduces significant challenges. For example, the global minima  cannot simply be characterized by the two real numbers $\alpha_t$ and $\beta_t$. Identifying other effective measures that characterize  the GD sequence, and providing a dynamic analysis akin to that  in equations \eqref{equ:5} and \eqref{equ:6}, is notably difficult. Fundamentally, this difficulty arises from the heterogeneity across different dimensions or, more formally, from the non-commutativity of matrix multiplication.


One way to address this challenge involves using  a benign initialization with a high initial SNR. This strategy facilitates the extension  of the  SNR analysis to general rank problems and enables the establishment of  the local linear convergence for GD.  Consider problem \eqref{equ:1} with a general $r$ and assume $\mSigma={\rm diag}(\lambda_1,\ldots,\lambda_d)$ is diagonal with decreasing diagonal elements, where the eigengap $\Delta\coloneqq\lambda_r-\lambda_{r+1}>0$. Let $\mX_0\in\sR^{d\times r}$ be an initialization  point. Then the GD update rule is: 
\begin{align}
    \mX_{t}=\mX_{t-1}+\eta(\mSigma-\mX_{t-1}\mX_{t-1}^\top)\mX_{t-1},\label{equ:9}
\end{align}
where $\eta$ is the learning rate.

For the SNR argument, we decompose $\mX_t$ into $(\mU_t^\top,\mJ_t^\top)^\top$, where $\mU_t$ consists of  the first $r$ rows of $\mX_t$ and $\mJ_t$ includes the remaining  $d-r$ rows. Analogous to the rank-one case,  $\mU_t$ is considered the signal, being non-zero at the global minima,  while $\mJ_t$ is considered the noise, being zero at the same.  Adopting a benign initialization means that $\sigma_r(\mU_0)$ is large and  $\sigma_1(\mJ_0)$ is small. More precisely, we define the set $\gR$ as: 
\begin{align}\label{equ:R}
    \gR=\bigg\{\mX=\begin{pmatrix}\mU\\\mJ\end{pmatrix}\mid\  &\sigma_1^2(\mX)\leq 2\lambda_1, \sigma_r^2(\mU)\geq \Delta/4, \notag \\
    &~~~\sigma_1^2(\mJ)\leq \lambda_r-\Delta/2 \bigg\}.
\end{align}
This set contains all the global minima of problem~\eqref{equ:1}. Moreover, the SNR ${\sigma_r^2(\mU)}/{\sigma_1^2(\mJ)}$ is larger than the constant ${\Delta}/{(4\lambda_1)}$ for any $\mX$ in $\gR$. In the Appendix, we demonstrate that if GD is initialized within $\gR$,  the sequence $\mX_t$ will remain in $\gR$ and the SNR will rapidly increase  towards infinity. Consequently, we can establish the local linear convergence of GD as in Theorem \ref{thm:1}, which is instrumental in analyzing  random initialization. Later, we will show that, for any initialization point  $\bX_0\notin\cR$, the convergence of GD consists of two phases: the first phase, where the sequence enters $\cR$, followed by  the final global convergence phase. With Theorem \ref{thm:1}, we only need to analyze the first phase. 

\begin{theorem}\label{thm:1}
    Suppose $\eta\leq \frac{\Delta^2}{36\lambda_1^3}$, $\mX_0\in\gR$, and $\mX_t$ is the GD sequence  given by \eqref{equ:9}. Then, for any small $\epsilon>0$, we have $\|\mSigma_r-\mX_t\mX_t^\top\|\leq \epsilon$ in $\gO(\frac{6}{\eta\Delta}\ln\frac{200r\lambda_1^3}{\eta\Delta^2\epsilon})$ iterations, where $\mSigma_r={\rm diag}(\lambda_1,\ldots,\lambda_r,0,\ldots,0)$. 
\end{theorem}



Prior studies on local linear convergence have either focused on the rank-one scenario \citep{chen2019gradient} or assumed that $\mSigma$ is exactly of rank $r$ \citep{zhu2021global}, and their proof arguments do not directly apply to Theorem \ref{thm:1}. In contrast, by utilizing an SNR argument, we establish local linear convergence for general cases. Our SNR analysis hinges on establishing a lower bound for the signal, $\sigma_{r}^2(\bU_{t+1})$, and an upper bound for the noise, $\sigma_1^2(\bJ_{t+1})$. These bounds must be precisely related to facilitate the analysis of the ratio between SNR$_{t+1}$  and SNR$_{t}$,  which poses significant  challenges.

Moreover, while we assume 
$\mSigma$ is positive semi-definite for simplicity, our proof  can easily be adapted to general symmetric case. It can also be modified to establish the local linear convergence of GD in matrix sensing scenarios \citep{zhu2021global}.


\section{Random initialization}\label{sec:RI}

Benign initialization, while conceptually valuable, has limited practical utility because it often requires oracle information. This limitation is particularly pronounced in matrix sensing problems, where $\mSigma$ is observed only through random measurements \citep{stoger2021small}. Consequently, researchers have shifted their focus towards random initialization. According to Theorem \ref{thm:1}, the convergence analysis of GD simplifies to determining the duration required for the sequence to enter the set $\cR$. 
Once within  $\cR$, the sequence is guaranteed to converge to the global minima exponentially fast.

\subsection{Small random initialization}\label{sec:3.2}

Existing research, with the exception of the rank-one case,  predominantly focuses on  small random initialization. This approach assumes $\bX_0=\varpi\bN_0$, where $\bN_0\in\RR^{d\times r}$ has independent $\cN(0,\frac{1}{d})$ entries and $\varpi$ is notably  small. Concentration arguments indicate  the norm $\norm{\bX_0}$ is of order $\cO(\varpi)$. When $\varpi$ is sufficiently small, the higher-order term $\mX_{\cdot}\mX_{\cdot}^\top\mX_{\cdot}$ in \eqref{equ:9} becomes negligible in the early iterations. Consequently, during these early iterations, the GD iteration behaves like a power method: 
\begin{align}
    \mX_t\approx\mX_{t-1}+\eta\mSigma\mX_{t-1}.
\end{align}
The eigenvectors associated with larger eigenvalues will be learned faster. Using the same $\bU,\bJ$ notation as in Section~\ref{sec:3.1}, we obtain that  ${\sigma_{r}(\bU_{t+1})}/{\sigma_{r}(\bU_t)}$ is greater than ${\sigma_1(\bJ_{t+1})}/{\sigma_1(\bJ_t)}$ for small $t$, indicating  that the signal strength increases faster than the noise strength. By picking  a sufficiently small $\varpi$, we can show that after $\cO(\log(d))$ rounds, $\sigma_r^2(\bU_{t})$ will rise above $\Delta/4$ while $\sigma_1(\bJ_t)$ remains negligible. 
This rapid entry of the sequence $\bX_t$ into the region 
$\cR$ facilitates global linear convergence, as shown by \cite{stoger2021small}.  Additionally, \cite{jin2023understanding} explore  the incremental learning behavior of GD under small $\varpi$ conditions. Other studies, such as those by \cite{ma2022behind} and \cite{soltanolkotabi2023implicit}, also examine GD with small initialization.




\subsection{Large random initialization}\label{sec:large}

In practice, however, a large initialization is often employed, where $\mX_0=\varpi\mN_0$ with  $\varpi$ being  a constant {\bf independent of $d$}. In such cases, the arguments in Section \ref{sec:3.1} or Section \ref{sec:3.2} are inadequate. Specifically, the initial SNR is too low for  the arguments in Section~\ref{sec:3.1} to be applicable, and the initial magnitude $\norm{\mX_0}$ is too high to use the arguments in Section \ref{sec:3.2}. Thus, to examine large initialization, we  need a more delicate analysis, as presented in this section.



Our analysis builds upon the discussions presented   in Section \ref{sec:1.1}. Specifically, we consider problem \eqref{equ:1} with a general rank parameter $r$ and assume without loss of generality that $\mSigma={\rm diag}(\lambda_1,\ldots,\lambda_d)$ is diagonal with decreasing diagonal elements. Suppose the leading $r+1$ eigenvalues of $\mSigma$ are strictly decreasing and let $\Delta=\min_{i\leq r}\{\lambda_i-\lambda_{i+1}\}>0$ be the eigengap. Our goal is to characterize the convergence  of the GD sequence $\bX_t$ as defined in equation \eqref{equ:9}.



Following the discussion in Section \ref{sec:1.1}, we define $\bu_{k,t}$ as the $k$-th row of $\bX_t$ and $\bK_{k,t}$ as the $(k+1)$-to-$d$-th rows of $\bX_t$. The relationships between these definitions and the visualizations in Figure \ref{fig:1} are introduced  in Section \ref{sec:1.1}.
Let
\begin{align}\label{equ:S}
    \gS=\{\mX\in\sR^{d\times r}\mid \ & \sigma_1^2(\mX)\leq 2\lambda_1,\notag\\
    &\sigma_1^2(\mK_{k})\leq \lambda_k-\frac{3\Delta}{4},\forall k\leq r\},
\end{align}
be a subset of $\RR^{d\times r}$ where $\bX$ and $\bK_k$, the $(k+1)$-to-$d$~rows of $\mX$, both have bounded norms. We will show that $\bX_t$ quickly enters the set $\cS$. The duration until this entry occurs is denoted by:  
\$
  t_{{\rm init},1}=\min\{t\geq 0\mid \bX_t\in\cS\}.
  \$ 
To streamline our presentation,  we introduce two constants $t^*$ and $t^{\sharp}$:
\#
t^*&=\log\left(\frac{\Delta^2}{8\lambda_1^3+144r^2\lambda_1}\right)/\log(1-\eta\Delta/6), \\
 t^{\sharp}&=\log\left({\Delta}/{(4r)}\right)/\log(1-\eta\Delta/6).\label{equ:t*}
\#  
Subsequently, we define the quantities 
\$
\{T_{\bu_k},t_k,t_k^*,t_{{\rm init},k+1}\}
\$ 
in a successive manner up to $T_{\bu_r}$:
\begin{itemize}[leftmargin=*]
    \item Define $T_{\bu_k}$, counted from $t_{{\rm init},k}+1$, as the earliest time when the strength of the $k$-th signal,  $\sigma_1^2(\bu_{k,t})$,  first  surpasses $\Delta/2$:
    \$
    T_{\bu_k}=\min \{t\geq 0\mid \sigma_1^2(\bu_{k,\,t+t_{{\rm init},k}})\geq\Delta/2\};
    \$
    \item $t_k=t_{{\rm init},k}+T_{\bu_k}+t^*$;
    \item $t_k^*$ is the smallest integer for which the following inequality holds, indicating that the $(k+1)$-th signal strength no longer falls below a geometrically decaying sequence 
    \#
    &r(1-\eta\Delta/6)^{t_{k}^*} \nonumber\\
    &\qquad\leq\sqrt{\frac{\Delta}{8}} \min\left\{\sigma_1(\bu_{k+1,t_k+t_k^*}),\sqrt{\frac{\Delta}{2}}\right\}; \label{equ:assumption}
    \#
    \item $t_{{\rm init},k+1}=t_k+t_k^*$.
\end{itemize}


These quantities are instrumental in characterizing the convergence of GD, and our primary objective is to upper bound these quantities. We first find that  $t_k^*<\infty,\forall k\leq r$  almost surely when random initialization is utilized. This finding, articulated in Proposition \ref{prop:asp}, is supported by the theory of \citet{lee2016gradient} and the landscape analysis of  \citet{zhu2021global}. A detailed proof of this result is deferred to the Appendix.


\begin{proposition}\label{prop:asp}
    Let $\eta\leq\frac{\Delta}{100\lambda_1^2}$ and $\bX_t$ be the GD sequence initialized with  $\bX\in\RR^{d\times r}$. Then the following set 
    \$
    \{\bX\in\RR^{d\times r}|  \sigma_1(\bX)\leq {1}/{\sqrt{3\eta}},\,  t_k^*=\infty\textnormal{ for some }k\leq r\}
    \$
    is of Lebesgue measure zero.
\end{proposition}



Motivated by this proposition, we introduce the following assumption and then present  our main theorem. While our theorem is formulated under deterministic initialization, it remains applicable to scenarios involving random initialization.

\begin{assumption}\label{asp:1}
    Assume that $t_k^*<\infty$ for all $k\leq r$. 
\end{assumption}



\begin{theorem}\label{lma:r'}

    Suppose $\eta\leq\frac{\Delta^2}{100\lambda_1^{3}}$, $\sigma_1(\bX_0)\leq 1/{\sqrt{3\eta}}$, $\bX_t$ is the GD sequence, and Assumption \ref{asp:1} holds. Then we have
    \begin{enumerate}
        \item[(1)] $\bX_t\in\cR$ for all $t\geq t_{\cR}\coloneqq t_{{\rm init},r}+T_{\bu_r}+t^*+t^{\sharp},$ where 
        \$
        t_{{\rm init},1}&=\gO\left(\frac{1}{\eta\lambda_1}\log\frac{1}{6\eta\lambda_1}\right)+\gO \left(\frac{1}{\eta\Delta}\log\frac{8\lambda_1}{\Delta} \right),\\
        T_{\bu_k}&=\cO\left(\frac{4}{\eta\Delta}\log\frac{\Delta}{2\sigma_1^2(\bu_{k,t_{\rm init},k})}\right),\quad\forall k\leq r.
        \$

        \item[(2)]GD achieves $\epsilon$-accuracy, i.e., $\norm{\bSigma_r-\bX_t\bX_t^\top}_{\rF}\leq \epsilon$, in 
        \#\label{equ:global}
        t_{\cR}+\cO\left(\frac{6}{\eta\Delta}\ln\frac{200r\lambda_1^3}{\eta\Delta^2\epsilon}\right)
        \# 
        iterations.  
        
        \item[(3)] For all $k< r$ and $t\geq t_k$, both $\sigma_1(\bu_{k,t}\bK_{k,t}^\top)$ and $p_{k,t}$ converge to zero linearly fast:
    \#
    &\sigma_1(\bu_{k,t}\bK_{k,t}^\top)\leq(1-\eta\Delta/6)^{t-t_{k}},\notag\\
    &|p_{k,t}|\leq (2\lambda_1+\frac{24r}{\eta\Delta})\cdot(1-\eta\Delta/8)^{t-t_k},\label{equ:pkt}
    \#
    where $p_{k,t}=\lambda_k-\sigma_1^2(\bu_{k,t})$. This reveals the implicit incremental learning of GD.   
    \end{enumerate}
\end{theorem}

Theorem \ref{lma:r'} imposes relatively  mild conditions. First, the condition that $\sigma_1(\bX_0)\leq {1}/{\sqrt{3\eta}}$ holds with high probability when we pick $\bX_0=\varpi\bN_0$ with $\varpi\lesssim 1/{\sqrt{\eta}}$, using the same $\bN_0$ as previously discussed. This order is maximal, as the GD sequence may diverge when $\sigma_1(\bX_0)$ exceeds this order. For example, consider $\bSigma=\zero$ and $\eta\sigma_1^2(\bX_0)\geq3$. By employing an inductive argument for the GD iteration \eqref{equ:9}, we can show that
\$
\sigma_1(\bX_{t+1})\geq (\eta\sigma_1^2(\bX_t)-1)\cdot \sigma_1(\bX_t)>2\sigma_1(\bX_t),\quad \forall t.
\$
This  result implies that the GD sequence diverges when $\varpi$ is a large constant. Consequently,  it  establishes the maximal order of $\varpi$ for convergence is $\cO(1/\sqrt{\eta})$. The only possible improvement could be a constant factor. Additionally, this rate is independent of the dimension $d$, in stark contrast to the condition in the small initialization scenario \eqref{equ:small} where $\varpi$ decays to zero exponentially fast as $d$ increases. This relaxed assumption makes our theorem applicable to large initialization. Second, Assumption \ref{asp:1} is considered mild as demonstrated in Proposition \ref{prop:asp}. Therefore,  Theorem \ref{lma:r'} is applicable across a wide range of contexts.

The conclusions of Theorem \ref{lma:r'} are threefold. First, we upper bound all quantities except $t_k^*$ by logarithmic terms. 
These bounds partially explain the fast convergence of GD in Figure \ref{fig:1}. 
Next, by combining property (1) with Theorem \ref{thm:1}, we obtain the global convergence rate in \eqref{equ:global}. 
Third, we show
that the $k$-th signal strength converges to the target value exponentially fast following  the $t_k$th step. Crucially, this convergence is independent of $t_{j}^*$ for all $j\geq k$.
 This indicates that the convergence of the $k$-th signal is not affected by the behavior of subsequent signals ($(k+1)$-to-$r$-th), exemplifying an implicit incremental learning phenomenon in GD.


Finally, if we make an additional assumption, we can obtain the fast global convergence of GD in Theorem \ref{thm:8}.

\begin{assumption}\label{asp:2}
    Assume $t_k^*=\cO(\log(d))$ for all $k\leq r$.
\end{assumption}

\begin{theorem}\label{thm:8}
    Assume that conditions in Theorem \ref{lma:r'}  and Assumption \ref{asp:2} hold. Then GD achieves $\epsilon$-accuracy in $\cO(\log(d)+\log(1/\epsilon))$ iterations.
\end{theorem}

 
Assumption \ref{asp:2} is  particularly nuanced as  it upper bounds the quantity $t_k^*$. We call it a transition assumption because it 
 facilitates the analytical progression from the analysis of the $k$-th row to the $(k+1)$-th row, positing the transition time is $\cO(\log(d))$. Verifying this assumption is challenging and we leave it to the future work.



\section{Proof sketch}\label{sec:proof}
 

In this section, we provide a sketch of the proof. Initially focusing on rank-two matrix approximation, we subsequently extend the analysis to general rank problems. The primary distinction between the rank-two scenario and general rank problems lies in the number of rounds of inductive arguments required.

\subsection{Rank-two matrix approximation}\label{sec:3}

To start with, we first show that when $\sigma_1(\bX_0)\leq {1}/{\sqrt{3\eta}}$, the GD sequence will quickly enter the region $\cS$  defined in \eqref{equ:S}, and the sequence will remain in $\cS$ afterwards. This proves the first property in Theorem \ref{lma:r'}. Recall that $t_{{\rm init},1}=\min\{t\geq 0\mid \bX_t\in\cS\}$ and $\bX_t$ is the GD sequence given by \eqref{equ:9}.


\begin{lemma}\label{lma:1}
    Suppose $\eta\leq\frac{1}{12\lambda_1}$ and $\sigma_1(\mX_0)\leq \frac{1}{\sqrt{3\eta}}$. Then $\mX_t\in\gS$ for all $t\geq t_{{\rm init},1}$, where 
    \begin{align*}
        t_{{\rm init},1}=\gO\left(\frac{1}{\eta\lambda_1}\log\frac{1}{6\eta\lambda_1}\right)+\gO\left(\frac{1}{\eta\Delta}\log\frac{8\lambda_1}{\Delta}\right).
    \end{align*}
\end{lemma}

Lemma \ref{lma:1} establishes  that $\cS$ is an absorbing set of GD, indicating  that once the sequence enters this set, it will remain there indefinitely. This characteristic allows us to assume $\bX_t\in\cS$ in subsequent analysis.

\subsubsection{$\sigma_1^2({\mathbf{u}}_{1,t})$ rapidly increases  above $\Delta/2$}\label{sec:4.2.1}

Our next step is to analyze the first row $\bu_{1,t}$ of $\mX_t$. This is in sharp contrast to the results in Section~\ref{sec:3.1} and \ref{sec:3.2}, where the first $r$ rows of $\bX_t$ are analyzed together. Although using large initialization makes previous analysis infeasible, it is still manageable to examine only the first row of $\bX_t$. In Lemma~\ref{lma:2}, we show that $\sigma_1^2(\bu_{1,t})$ rapidly increases to be larger or equal to $\Delta/2$, and it remains larger than or equal to this threshold afterwards. 
This implies that the first signal strength will become larger than or equal to  $\Delta/2$ after logarithmic steps. It allows us to employ an SNR argument in the subsequent analysis.
Furthermore, this lemma aligns with the initial phase of the GD dynamics as illustrated in Figure \ref{fig:1}, providing valuable theoretical insights into the behavior of GD.

\begin{lemma}\label{lma:2}
    Let $\eta\leq\frac{1}{12\lambda_1}$ and  $\sigma_1(\bX_0)\leq1/{\sqrt{3\eta}}$. Assume $\sigma_1(\bu_{1,t_{{\rm init},1}})>0$. Then 
    \$
    \sigma_1^2(\bu_{1,t})\geq \Delta/2,~~~\forall~t\geq t_{{\rm init},1}+ T_{\bu_1}, 
    \$ 
 where
    \begin{align*}
        T_{\bu_1}=\cO\left(\frac{4}{\eta\Delta}\log\frac{\Delta}{2\sigma_1^2(\bu_{1,{t_{{\rm init},1}}})}\right).
    \end{align*}
\end{lemma}

\subsubsection{SNR converges linearly towards infinity and $\sigma_1^2({\mathbf{u}}_{1,t})$ converges}


Once $\sigma_1^2(\bu_{1,t})$ exceeds $\Delta/{2}$, then we can employ an SNR argument similar to \eqref{equ:7}. Specifically, we pick the SNR as
\$
\frac{\sigma_1^2(\bu_{1,t})}{\sigma_1(\bu_{1,t}\bK_{1,t}^\top)},
\$ 
and show  that it converges linearly towards infinity, where $\bK_{1,t}$ is the $2$-to-$d$-th rows of $\bX_t$. Since $\sigma_1^2(\bu_{1,t})$ is in the interval $[\Delta/2,2\lambda_1]$ by Lemma \ref{lma:1} and \ref{lma:2}, we can show that the noise strength $\sigma_1(\bu_{1,t}\bK_{1,t}^\top)$ diminishes to zero  fast. In particular, 
if $\bu_{1,t}\bK_{1,t}^\top=\zero$, then the dynamics of $\bu_{1,t}$ becomes
\$
\bu_{1,t+1}=\bu_{1,t}+\eta\lambda_1\bu_{1,t}-\eta\sigma_1^2(\bu_{1,t})\bu_{1,t}.
\$
This update rule implies the fast convergence of $\sigma_1^2(\bu_{1,t})$ to $\lambda_1$. Generally, when the term $\bu_{1,t}\bK_{1,t}^\top$ is close to zero, the dynamics of $\bu_{1,t}$ will mimic the above iteration. Following this, we can establish the fast convergence of $\sigma_1^2(\bu_{1,t})$ to $\lambda_1$. These results, established in Lemma \ref{lma:3}, relate to Property 3 in Theorem \ref{lma:r'}.

\begin{lemma}\label{lma:3}
    Let $\eta\leq\frac{\Delta}{100\lambda_1^2}$ and assume $\sigma_1(\bX_0)\leq 1/{\sqrt{3\eta}}$ and $\sigma_1(\bu_{1,0})>0$. Then for all $t\geq t_1$, we have
    \begin{align*}
        \sigma_1(\bu_{1,t}\bK_{1,t}^\top)\leq (1-\eta\Delta/6)^{t-t_1}
    \end{align*}
    where $t_1=t_{{\rm init},1}+T_{\bu_1}+t^*$, $T_{\bu_1}$ is given in Lemma \ref{lma:2}, and  $t^*$ is a constant defined in \eqref{equ:t*}. 
    In addition, let 
    \$
    p_{1,t}=\lambda_1-\sigma_1^2(\bu_{1,t})
    \$
    be the error term. Then for all $t\geq t_1$, we have
    \$
    |p_{1,t}|\leq (2\lambda_1+24\frac{24r}{\eta\Delta})\cdot (1-\eta\Delta/8)^{t-t_1}.
    \$
\end{lemma}

The above result implies  the rapid convergence of the first signal. This convergence is independent of the behavior of subsequent signals, a phenomenon known as implicit incremental learning. Furthermore, this result corresponds to the second phase of the GD dynamics, as depicted in Figure \ref{fig:1}, offering a substantial theoretical explanation.


\subsubsection{Transition assumption and induction}\label{sec:4.2.3}

Lemma \ref{lma:3} shows that the magnitude $\sigma_1(\bu_{1,t}\bK_{1,t}^\top)$ diminishes linearly to zero. This motivates us to decouple the original matrix factorization problem into two sub-problems. For the first sub-problem, we study the convergence of the first row of $\mX_t$, which has been presented in previous section. In the second sub-problem, we examine $\bK_{1,t}$, the $2$-to-$d$-th rows of $\mX_t$. Such decoupling is exact when $\bu_{1,t}\mK_{1,t}^\top=0$, and under this condition, the update rule of $\mK_{1,t}$ becomes
\$
\mK_{1,t}=\mK_{1,t-1}+\eta(\bGamma_1-\mK_{1,t-1}\mK_{1,t-1}^\top)\mK_{1,t-1},
\$
where $\bGamma_1=\diag(\lambda_2,\ldots,\lambda_{d})$. This is congruent with the GD update rule of $\mX_t$ as in \eqref{equ:9}, and hence an inductive argument could be applied.

Generally, when the noise term $\sigma_1(\bu_{1,t}\bK_{1,t}^\top)$ only decreases fast but does not reach zero, one should check whether $\bu_{1,t}\bK_{1,t}^\top$ is negligible (in the analysis of $\bu_{2,t}$). Specifically, if $\sigma_1(\bu_{2,t})$ is not always decreasing at the same speed as $\sigma_1(\bu_{1,t}\bK_{1,t}^\top)$, then we can apply the above inductive argument. To formalize  this intuition, we introduce a variable $t_1^*$, which is defined as the smallest integer such that 
\#
r(1-\eta\Delta/6)^{t_{1}^*}\leq\sqrt{\frac{\Delta}{8}}\min\{\sigma_1(\bu_{2,t_1+t_1^*}),\sqrt{\frac{\Delta}{2}}\},\label{equ:t1*}
\#
where $t_1$ is defined in Lemma \ref{lma:3}. Recall that for all $t\geq t_1$, 
\$
\sigma_1(\bu_{1,t}\bK_{1,t}^\top)\leq (1-\eta\Delta/6)^{t-t_1}.
\$
Thus, \eqref{equ:t1*} essentially compares the second signal strength $\sigma_1(\bu_{2,\cdot})$ with an upper bound on the noise  $\sigma_1(\bu_{1,t}\bK_{1,t}^\top)$. When \eqref{equ:t1*} holds, we find that the noise term is negligible, and thus a similar result as Lemma \ref{lma:2} can be established for the second signal $\sigma_1(\bu_{2,\cdot})$, leading to Lemma \ref{lma:10}. 

\begin{lemma}\label{lma:10}
    Suppose the conditions of Lemma \ref{lma:3} holds. Let $t_{{\rm init},2}=t_1+t_1^*$, where $t_1$ is given by Lemma \ref{lma:3} and $t_1^*$ is given by \eqref{equ:t1*}. Suppose $t_1^*<\infty$. Then 
    \$
    \sigma_1^2(\bu_{2,t})\geq {\Delta}/{2},~~~\forall~t\geq t_{{\rm init},2}+T_{\bu_2}, 
    \$ 
     where
    \$
    T_{\bu_2}=\cO\left(\frac{4}{\eta\Delta}\log\frac{\Delta}{2\sigma_1^2(\bu_{2,t_{{\rm init},2}})}\right).
    \$
\end{lemma}

In Lemma \ref{lma:10}, we assume $t_1^*<\infty$, which relates to Assumption \ref{asp:1}. If we  assume $t_1^*=\cO(\log(d))$ as in Assumption \ref{asp:2}, then we can show that $T_{\bu_2}=\cO(\log(d))$ as well. While we have not theoretically characterized the quantity $t_1^*$, our theory is still insightful in the following sense. 
\begin{itemize}[leftmargin=*]
    \item First, the term $\sigma_1(\bu_{1,t}\bK_{1,t}^\top)$ is shown to decay to zero linearly fast while $\sigma_1^2(\bu_{2,t})$ does not seem to possess similar theories. Hence, we may expect that the time point $t_1^*$ is not large. 
    \item Second, $t_1^*$ characterizes the time when the GD sequence escapes from the saddle points\footnote{Any stationary point with $\bu_{2}=0$ is a saddle point. Hence, if the GD sequence $\mX_t$ converges with $t_1^*=\infty$, then it must converge to a saddle point.}. This time is inevitable for the GD sequence converging to the global minima. Even we do not provide an upper bound on $t_1^*$, we know the convergence behavior of GD during this time. Notably, during this time, both $\sigma_1(\bu_{1,t}\bK_{1,t}^\top)$ and $\sigma_1^2(\bu_{2,t})$ converge to zero fast. 
    \item Thirdly, during the time $t_1$-$(t_1+t_1^*)$, while $\sigma_1^2(\bu_{2,t})$ converges to zero fast, the first signal $\sigma_1^2(\bu_{1,t})$ still converges to $\lambda_1$, as shown in Lemma \ref{lma:3}. This means the convergence of the first signal is not affected by the behaviors of the rest signals, which supports the incremental learning phenomenon -- {\it leading signals first converge even when the rest are stuck by saddle points}. 
    \item Finally, the time $t_1$ to $t_1+t_1^*$ aligns with the third stage of the GD dynamics as displayed in Figure \ref{fig:1}. The experiment shows that the time $t_1^*$ is not too long. 
\end{itemize}
Despite these arguments, there is still a need to examine the duration $t_1^*$ in the future research, which might involve investigating specific initialization mechanisms.



\subsubsection{Final convergence}

Previous analyses indicate that the strengths of both the first and second signals exceed $\Delta/2$, and the corresponding noise components decay geometrically. A simple verification shows that the GD sequence $\mX_t$ will quickly enter the region $\gR$, which is defined in \eqref{equ:R}. Then by the local linear convergence of GD in Theorem \ref{thm:1}, we shall complete the characterization of the GD sequence's convergence to the global minima. This final stage aligns with the fourth stage of the GD dynamics as illustrated in Figure \ref{fig:1}.



\subsection{General rank matrix approximation}\label{sec:4}

It is straightforward to extend the rank-two case to the general rank case. The key point is to repeat the inductive arguments for $(r-1)$ rather than one times.

Similar to the rank-two case, we will now successively show that $\sigma_1^2(\bu_{k,t})$ surpasses $\Delta/2$ and $\sigma_1(\bu_{k,t}\bK_{k,t}^\top)$ diminishes linearly to zero  for all $k\leq r$. Moreover, we will show that $\sigma_1^2(\bu_{k,t})$ converges to $\lambda_k$ after certain iterations. Once the first $r$ rows of $\bX_t$ are all analyzed, we can show that the sequence $\bX_t$ quickly enters the region $\cR$ defined in \eqref{equ:R}. By invoking the local linear convergence theorem, we will conclude the proof. 

Our analysis consistently uses the SNR argument, although the specific SNR definitions vary. 
\begin{itemize}[leftmargin=*]
    \item For analyzing the $k$-th signal strength in Theorem \ref{lma:r'}, we examine the SNR given by
    \$
    \frac{\sigma_1^2(\bu_{k,t})}{\sigma_1(\bu_{k,t}\bK_{k,t}^\top)}.
    \$
    We will prove both the diminishing of $\sigma_1(\bu_{k,t}\bK_{k,t}^\top)$ to zero and the convergence of $\sigma_1^2(\bu_{k,t})$ to $\lambda_k$.
    \item For analyzing the local linear convergence in Theorem \ref{thm:1}, we utilize the SNR  defined  as 
    \$
    {\sigma_r^2(\bU_t)}/{\sigma_1^2(\bJ_t)},
    \$
    where $\bU,\bJ$ are defined in Section \ref{sec:3.1}. We will prove the linear convergence of $\bJ$ to zero, as well as the fast convergence of $\bU_{t}$ to the target matrix.

\end{itemize}

\section{Concluding remarks}\label{sec:5}

This paper presents a comprehensive analysis of the trajectory of GD for matrix factorization problems, with a partcular focus on large initialization. By employing both an SNR argument and inductive reasoning, we deepen the investigation and uncover that even with large initialization, GD still exhibit an incremental learning phenomenon. We anticipate that these insights will stimulate further research in related domains.





This study presents several limitations that naturally suggest avenues for future research.
\begin{itemize}[leftmargin = *]
    \item  First, we have not established an upper bound for $t_k^*$ defined in \eqref{equ:assumption}. Determining an effective upper bound is crucial, and exploring potential negative results in this context could also be insightful.
    \item Second, our analysis assumes strictly decreasing top eigenvalues. Extending the findings to matrices with possibly equal eigenvalues  requires additional research.
    \item  Third, our analysis is confined to the simplest matrix factorization scenario. Exploring these results in more complex settings, such as matrix sensing where $\bSigma$  is accessible only through linear measurements, would be particularly compelling. Given that our dynamic analysis is sensitive to noise,  this generalization may be challenging. 
    \item  Last, investigating  GD in deep matrix factorization also presents a significant research opportunity. It remains unclear how large initialization impacts the GD trajectory in such a complex case.
\end{itemize}

\scolor{

}

\section*{Acknowledgement}

Hengchao Chen and Qiang Sun are supported in part by an NSERC Dicovery Grant (RGPIN-2018-06484), a Data Sciences Institute Catalyst Grant, and a computing grant from Compute Canada.


\bibliography{uai2024-template}

\newpage

\onecolumn

\title{Gradient descent in matrix factorization: 
Understanding large initialization\\(Supplementary Material)}
\maketitle

\appendix
\section*{\Large Appendix}

\section{Preliminary}\label{sec:apd:preliminary}

In this section, we present preliminary to our studied formulation. 

\subsection{Matrix sensing and it population version}

In this section, we review matrix sensing problems \citep{recht2010guaranteed,li2018algorithmic} and derive its population version as a matrix factorization problem \citep{chi2019nonconvex}. In matrix sensing problems, we aim to recover an unknown  symmetric positive semidefinite matrix $\bSigma\in\RR^{d\times d}$ from a set of linear measurements
\$
y_i=\langle{\bSigma},{\bA_i}\rangle,\quad i=1,\ldots, m.
\$
Here $\{\bA_i\}\subseteq\RR^{d\times d}$ are sensing matrices known {\it a priori}. A standard assumption is to require that $\bSigma$ is of rank $r\ll d$ and $\bA_i$ has independent $\cN(0,1)$ entries. Under such assumptions, one common strategy is to employ matrix factorization, and solve the following least square minimizing problem:
\$
\hat\bSigma=\hat \bX\hat \bX^\top,\quad \hat \bX=\argmin_{\bX\in\mathbb{R}^{d\times r}}f(\bX)\overset{\rm def}{=}\frac{1}{4m}\sum_{i=1}^m(y_i-\langle\bA_i,\bX\bX^\top\rangle)^2.
\$
Starting from an initial point $\bX_0\in\mathbb{R}^{d\times r}$, the gradient descent algorithm will update $\bX_t$ as follows
\$
\bX_t=\bX_{t-1}-\eta \nabla f(\bX_{t-1}),
\$
where $\nabla f(\bX)$ is the gradient of $f$ given by
\$
 \nabla f(\bX)=-\frac{1}{m}\sum_{i=1}^m(y_i-\left<\bA_i,\bX\bX^\top\right>)\bA_i\bX=-\frac{1}{m}\sum_{i=1}^m\left<\bA_i,\bSigma-\bX\bX^\top\right>\cdot \bA_i\bX.
\$
Using statistical concentration analysis \citep{recht2010guaranteed}, we know that the gradient $\nabla f(\bX)$ is approximately
\$
\nabla f(\bX)\approx\mathbb{E}\nabla f(\bX)=-(\bSigma-\bX\bX^\top)\bX.
\$
Thus, the matrix sensing is related to the following population version:
\$
\bX_t=\bX_{t-1}+\eta(\bSigma-\bX_t\bX_t^\top)\bX_t,
\$
which is the GD update rule \eqref{equ:9} for matrix factorization problems. Many works \citep{recht2010guaranteed,li2018algorithmic,8357489,zhu2021global} build on the intuition that for sufficiently large $m$, matrix sensing is approximately the matrix factorization. These works rely on the restricted isotropy property (RIP) established in \citet{recht2010guaranteed}. We can use this property and our analysis in Section \ref{sec:3.1} to derive the local linear convergence in the matrix sensing problem. However, for the case of large initialization, RIP seems not sufficient. Therefore, extending matrix factorization to matrix sensing requires further statistical analysis.

\subsection{Invariance to orthogonal rotations}\label{sec:a2-apd}

In this section, we
elaborate why there is no loss of generality to assume that $\bSigma$ is a diagonal matrix. Specifically, when $\bSigma$ is a general symmetric positive semidefinite matrix, we can write $\bSigma=\bU\bLambda \bU^\top$ by the eigen-decomposition, where $\bU\in\mathbb{R}^{d\times d}$ is an orthogonal matrix and $\bLambda$ is a diagonal matrix. Suppose the GD sequence is given by
\$
\bX_t=\bX_{t-1}+\eta(\bSigma-\bX_{t-1}\bX_{t-1}^\top)\bX_{t-1}.
\$
Then we consider the transformation $\bY_{t}=\bU\bX_t$, which leads to
\$
\bY_t=\bY_{t-1}+\eta(\bLambda-\bY_{t-1}\bY_{t-1}^\top)\bY_{t-1}.
\$
This reduces to the case where $\bLambda$ is diagonal. In addition, the convergences of $\bY_t$ and $\bX_t$ are associated, and the random initialization of $\bY_0$ and $\bX_0$ are associated up to an orthogonal matrix. Consequently, there is no loss of generality to assume that $\bSigma$ is diagonal.

\section{Proof of Theorem \ref{thm:1}}

Our proof of Theorem \ref{thm:1} consists of three steps. 
\begin{itemize}
    \item First, we show that $\cR$ is an absorbing region for GD. Here a set is regarded as an absorbing set if the GD sequence remains within the set after its first entrance. 
    \item Next, we show that $\sigma_1(\bJ_t)$ converges to zero at a linear rate, employing an SNR argument. 
    \item Finally, we establish the linear convergence to the global minima.
\end{itemize}
Before diving deeper, we first write down the update rules for $\bU_t$ and $\bJ_t$. By \eqref{equ:9}, we have 
\#
\bU_{t+1}&=\bU_t+\eta\bLambda_r\bU_t-\eta\bU_t\bX_t^\top\bX_t,\label{equ:U}\\
\bJ_{t+1}&=\bJ_t+\eta\bLambda_{\rm res}\bJ_t-\eta\bJ_t\bX_t^\top\bX_t,\label{equ:J}
\#
where $\bLambda_r=\diag(\lambda_1,\ldots,\lambda_r)$ and $\bLambda_{\rm res}=\diag(\lambda_{r+1},\ldots,\lambda_{d})$. Note that $\bSigma_r=\diag(\bLambda_r,\zero)$.

\subsection{The GD sequence remains in $\gR$}

Lemma \ref{lma:2.2} shows that $\cR$ is an absorbing region for GD.

\begin{lemma}\label{lma:2.2}
Suppose $\eta\leq \frac{\Delta^2}{36\lambda_1^{3}}$ and $\bX_t\in\cR$. Then $\bX_{t'}\in\cR$ for all $t'\geq t$.
\end{lemma}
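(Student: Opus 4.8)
\textbf{Proof proposal for Lemma \ref{lma:2.2}.}

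The plan is to show that the three defining inequalities of $\cR$ are preserved by one step of the update \eqref{equ:2.1}--\eqref{equ:2.2}; the claim then follows by induction on $t'\geq t$. Throughout I would work with the block decomposition $\bX=(\bU^\top\bJ^\top)^\top$ and write $\bP_t=\bX_t^\top\bX_t=\bU_t^\top\bU_t+\bJ_t^\top\bJ_t\in\RR^{r\times r}$, so that \eqref{equ:2.1}--\eqref{equ:2.2} read $\bU_{t+1}=\bU_t(\bI_r-\eta\bP_t)+\eta\bLambda_r\bU_t$ and $\bJ_{t+1}=\bJ_t(\bI_r-\eta\bP_t)+\eta\bLambda_{\res}\bJ_t$. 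The key quantitative facts I would extract first, valid whenever $\bX_t\in\cR$, are: (i) $0\preceq\bP_t\preceq 2\lambda_1^*\bI_r$, so $\bI_r-\eta\bP_t$ is PSD for $\eta\le 1/(2\lambda_1^*)$ (which holds since $\eta\le\Delta^2/(36\lambda_1^{*3})\le 1/(36\lambda_1^*)$); (ii) $\sigma_r^2(\bU_t)\ge\Delta/4$ and $\sigma_1^2(\bJ_t)\le\lambda_r^*-\Delta/2$ together force $\sigma_r^2(\bX_t)\ge\sigma_r^2(\bU_t)\ge\Delta/4$ and give a strict gap $\sigma_r^2(\bU_t)-\sigma_1^2(\bJ_t)$ bounded below.

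\emph{Step 1: the bound $\sigma_1^2(\bX_{t+1})\le 2\lambda_1^*$.} Here I would pass to the outer product. From $\bX_{t+1}=\bX_t+\eta(\bSigma-\bX_t\bX_t^\top)\bX_t$ one gets, in terms of $\bX_t\bX_t^\top$, that the operator norm of $\bX_{t+1}\bX_{t+1}^\top$ can be controlled by a scalar recursion: writing $a_t=\sigma_1^2(\bX_t)=\|\bX_t\bX_t^\top\|_{\rm op}$, a standard computation bounds $a_{t+1}$ by $a_t$ plus lower-order terms of the form $\eta\lambda_1^* a_t$ and $-\eta a_t^2$ plus $\eta^2$ corrections. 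One checks that the map $a\mapsto a+2\eta\lambda_1^* a-\eta a^2+\eta^2(\cdots)$, restricted to $a\le 2\lambda_1^*$, stays $\le 2\lambda_1^*$: at $a=2\lambda_1^*$ the dominant terms $2\eta\lambda_1^*\cdot 2\lambda_1^* - \eta(2\lambda_1^*)^2 = 0$ cancel, and the smallness of $\eta$ absorbs the $\eta^2$ remainder. This gives $\sigma_1^2(\bX_{t+1})\le 2\lambda_1^*$.

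\emph{Step 2: the noise bound $\sigma_1^2(\bJ_{t+1})\le\lambda_r^*-\Delta/2$.} From $\bJ_{t+1}=\bJ_t(\bI_r-\eta\bP_t+\eta\bLambda_{\res})$, and since $\|\bLambda_{\res}\|_{\rm op}=\lambda_{r+1}^*=\lambda_r^*-\Delta$ while $\bP_t\succeq\sigma_r^2(\bU_t)\bI_r\succeq(\Delta/4)\bI_r$ on the relevant subspace, I would estimate $\sigma_1(\bJ_{t+1})\le\sigma_1(\bJ_t)\cdot\|\bI_r-\eta\bP_t+\eta\bLambda_{\res}\|_{\rm op}$. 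The multiplier is $\le 1-\eta\lambda_r^*+\eta\lambda_{r+1}^* + \eta\,(\text{part of }\bP_t\text{ above }\sigma_r^2(\bU_t))$; the sign issue is that $\bP_t$ need not be a scalar multiple of identity, so one must be careful that only the \emph{smallest} eigenvalue of $\bP_t$ helps. A cleaner route: bound $\sigma_1(\bJ_{t+1})^2$ via $\bJ_{t+1}^\top\bJ_{t+1}$ and use $\bP_t\succeq\bU_t^\top\bU_t\succeq(\Delta/4)\bI_r$, yielding a contraction factor strictly below $1-\eta\Delta/2$ when $\sigma_1^2(\bJ_t)$ is near the threshold, plus an $\eta^2$ remainder killed by the step-size bound. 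Hence $\sigma_1^2(\bJ_{t+1})\le\lambda_r^*-\Delta/2$.

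\emph{Step 3: the signal bound $\sigma_r^2(\bU_{t+1})\ge\Delta/4$.} From $\bU_{t+1}=\bU_t+\eta\bLambda_r\bU_t-\eta\bU_t\bP_t$, I would lower-bound $\sigma_r(\bU_{t+1})$ using $\sigma_r(\bU_t+\eta\bLambda_r\bU_t)\ge(1+\eta\lambda_r^*)\sigma_r(\bU_t)$ and $\|\eta\bU_t\bP_t\|_{\rm op}\le\eta\sigma_1(\bU_t)\cdot 2\lambda_1^*$, but this crude subtraction is too lossy; instead I would use the bilinear identity $\bU_{t+1}^\top\bU_{t+1}\succeq(\bI_r+\eta\bLambda_r-\eta\bP_t)\bU_t^\top\bU_t(\bI_r+\eta\bLambda_r-\eta\bP_t)$ after checking $\bI_r+\eta\bLambda_r-\eta\bP_t$ is PSD and $\succeq\bI_r-\eta\bP_t+0$, then note $\sigma_r^2(\bU_{t+1})\ge\sigma_r^2(\bU_t)\cdot\sigma_{\min}^2(\bI_r+\eta\bLambda_r-\eta\bP_t)$. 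Since $\bP_t=\bU_t^\top\bU_t+\bJ_t^\top\bJ_t$, the key inequality $\sigma_r^2(\bU_t)+\sigma_1^2(\bJ_t)\le\lambda_r^*$ — which holds on $\cR$ because $\sigma_1^2(\bJ_t)\le\lambda_r^*-\Delta/2$ and we may assume $\sigma_r^2(\bU_t)\le\Delta/2$ in the worst case, otherwise we have slack — gives $\bLambda_r-\bP_t\succeq 0$ in the direction where $\bU_t$ is small, so the multiplier is $\ge 1$ there, keeping $\sigma_r^2(\bU_{t+1})\ge\sigma_r^2(\bU_t)\ge\Delta/4$; again an $\eta^2$ remainder is the only cost, absorbed by $\eta\le\Delta^2/(36\lambda_1^{*3})$.

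The main obstacle I anticipate is Step 3, and more precisely the fact that $\bP_t$ is a general PSD matrix rather than a scalar, so one cannot simultaneously diagonalize it with $\bU_t^\top\bU_t$ and $\bLambda_r$. The argument sketched above sidesteps this by working with $\sigma_r$ (the smallest singular value) and using the Courant--Fischer characterization on the subspace achieving the minimum, together with the structural inequality $\sigma_r^2(\bU_t)+\sigma_1^2(\bJ_t)\le\lambda_r^*$ on $\cR$, which ensures the "pull-down" term $-\eta\bU_t\bP_t$ never overwhelms the "push-up" term $\eta\bLambda_r\bU_t$ in the direction of the smallest singular vector of $\bU_t$. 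Tracking the $\eta^2$ error terms carefully and verifying they are dominated by $\eta\Delta/\text{poly}(\lambda_1^*)$ under the stated step-size condition is the remaining bookkeeping. Once all three inequalities are shown to propagate from step $t$ to step $t+1$, induction closes the proof.
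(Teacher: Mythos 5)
The overall plan — show each of the three defining inequalities of $\cR$ survives one gradient step and then induct — is exactly the paper's strategy, and your Steps 1 and 2 are rough versions of the paper's arguments. The genuine gap is in Step 3, which you correctly identify as the crux but do not actually resolve. First, the bilinear lower bound you invoke, $\bU_{t+1}^\top\bU_{t+1}$ bounded below by $(\bI_r+\eta\bLambda_r-\eta\bP_t)\bU_t^\top\bU_t(\bI_r+\eta\bLambda_r-\eta\bP_t)$ with your $\bP_t=\bX_t^\top\bX_t$, is not a consequence of the update: by \eqref{equ:2.1}, $\bU_{t+1}=\bU_t+\eta\bLambda_r\bU_t-\eta\bU_t\bX_t^\top\bX_t$ mixes a \emph{left} multiplication by $\bLambda_r$ with a \emph{right} multiplication by $\bX_t^\top\bX_t$, so $\bU_{t+1}$ cannot be written as $\bU_t\bM$ for a single $r\times r$ matrix $\bM$ unless $\bU_t$ happens to commute with $\bLambda_r$. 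Second, the ``key inequality'' $\sigma_r^2(\bU_t)+\sigma_1^2(\bJ_t)\leq\lambda_r^*$ simply fails on much of $\cR$ (there $\sigma_r^2(\bU_t)$ may be as large as $2\lambda_1^*$ while $\sigma_1^2(\bJ_t)$ is close to $\lambda_r^*-\Delta/2$), and your fallback ``otherwise we have slack'' is not quantified: to use slack you need a uniform one-step lower bound on $\sigma_r^2(\bU_{t+1})$, which is precisely what is missing. The paper's proof supplies exactly this ingredient (its Lemma \ref{lma:3.8a}): expand the \emph{outer} product $\bU_{t+1}\bU_{t+1}^\top$, split it into positive semidefinite blocks, and replace the non-commuting $\bJ_t^\top\bJ_t$ by the scalar bound $\sigma_1^2(\bJ_t)\bI_r$ at the cost of adding a PSD matrix, yielding
\[
\sigma_r^2(\bU_{t+1})\geq\bigl(1+2\eta(\lambda_r^*-\sigma_1^2(\bJ_t)-\sigma_r^2(\bU_t))\bigr)\sigma_r^2(\bU_t)-4\eta^2\lambda_1^{*3},
\]
after which monotonicity of $s\mapsto(1+\eta\Delta-2\eta s)s$ on $(-\infty,1/(4\eta)]$ and $\eta\leq\Delta^2/(32\lambda_1^{*3})$ give $\sigma_r^2(\bU_{t+1})\geq\Delta/4$ uniformly over $\sigma_r^2(\bU_t)\in[\Delta/4,2\lambda_1^*]$, with no case split. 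This is the step your sketch does not reach.

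Two smaller issues. In Step 1, your scalar recursion claims the first-order terms cancel exactly at $a=2\lambda_1^*$; if that were so, the positive $\eta^2$ remainder could not be absorbed no matter how small $\eta$ is. The correct recursion (square the bound $\sigma_1(\bX_{t+1})\leq(1+\eta\lambda_1^*-\eta\sigma_1^2(\bX_t))\sigma_1(\bX_t)$, or argue directly on $\sigma_1$ via monotonicity of $s\mapsto(1+\eta\lambda_1^*-\eta s^2)s$ as the paper does) has a strictly negative margin at the boundary. In Step 2, the expression $\bJ_{t+1}=\bJ_t(\bI_r-\eta\bP_t+\eta\bLambda_{\rm res})$ is dimensionally wrong ($\bLambda_{\rm res}$ acts on the left), and a near-threshold contraction alone does not close the argument: when $\sigma_1^2(\bJ_t)$ is well below $\lambda_r^*-\Delta/2$ the noise can grow, and one needs the monotone-function argument (the paper's $g_2(s)=(1+\eta(\lambda_{r+1}^*-s^2))s$) or an explicit overshoot estimate to rule out jumping over the threshold in one step.
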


\begin{proof}
    This lemma is proved by induction. Suppose $\bX_t\in\cR$. 
    \begin{itemize}
        \item By Lemma \ref{lma:inv1} and $\sigma_1^2(\bX_t)\leq2\lambda_1$, we get $\sigma_1^2(\bX_{t+1})\leq2\lambda_1$.
        \item By Lemma \ref{lma:inv2}, $\sigma_1^2(\bX_{t})\leq 2\lambda_1$, and $\sigma_1^2(\bJ_t)\leq \lambda_r-\Delta/2$, we get $\sigma_1^2(\bJ_{t+1})\leq\lambda_r-\Delta/2$.
        \item By Lemma \ref{lma:inv3} and $\bX_t\in\cR$, we get $\sigma_r^2(\bU_{t+1})\geq\Delta/4$ and thus $\bX_{t+1}\in\cR$.
    \end{itemize}   
    By induction, we conclude that $\bX_{t'}\in\cR$ for all $t'\geq t.$
\end{proof}

\subsubsection{Technical lemmas}

In this section, we summarize technical lemmas used in the proof of Lemma \ref{lma:2.2}. 

Lemma \ref{lma:inv1} delineates the first category of absorbing sets for GD, denoted as 
\$
\cS_1=\{\bX\in\RR^{d\times r}\mid\sigma_1(\bX)\leq a\},
\$
valid for any $a\in[\sqrt{\lambda_1},1/\sqrt{3\eta}]$. 

\begin{lemma}\label{lma:inv1}
 
    Suppose $\eta\leq\frac{1}{3\lambda_1}$ and $a\in[\sqrt{\lambda_1},1/\sqrt{3\eta}]$. If $\sigma_1(\bX_t)\leq a$, then $\sigma_1(\bX_{t'})\leq a$, $\forall$ $t'\geq t$. 
\end{lemma}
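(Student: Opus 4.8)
\textbf{Proof proposal for Lemma \ref{lma:inv1}.}

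The plan is to show that the scalar quantity $s_t\coloneqq\sigma_1^2(\bX_t)$ cannot jump above $a^2$ in one step, provided it starts at or below $a^2$. First I would derive a one-step recursion (or inequality) for $\sigma_1(\bX_{t+1})$ directly from the update rule \eqref{equ:1.3}. Writing $\bX_{t+1}=\bX_t+\eta(\bSigma-\bX_t\bX_t^\top)\bX_t=\bX_t\bigl(\bI_r+\eta\bX_t^\top\bX_t\bigr)^{-1}\cdot(\text{something})$ is not quite clean, so instead I would work with the symmetric PSD matrix $\bX_{t+1}^\top\bX_{t+1}$ or, more simply, bound $\sigma_1(\bX_{t+1})=\|\bX_{t+1}\|_{\mathrm{op}}$ via the submultiplicativity/triangle inequality. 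Let $v$ be a unit vector achieving $\|\bX_{t+1}v\|=\sigma_1(\bX_{t+1})$. Then
\[
\|\bX_{t+1}v\|=\bigl\|\bX_t v+\eta(\bSigma-\bX_t\bX_t^\top)\bX_t v\bigr\|=\bigl\|(\bI_d+\eta\bSigma-\eta\bX_t\bX_t^\top)\bX_t v\bigr\|\le \bigl\|(\bI_d+\eta\bSigma-\eta\bX_t\bX_t^\top)\bigr\|_{\mathrm{op}}\,\sigma_1(\bX_t).
\]
So the whole lemma reduces to controlling the operator norm of the symmetric matrix $\bM_t\coloneqq\bI_d+\eta\bSigma-\eta\bX_t\bX_t^\top$.

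Next I would estimate $\|\bM_t\|_{\mathrm{op}}$ under the hypotheses $\sigma_1(\bX_t)\le a$, $\eta\le 1/(3\lambda_1^*)$, and $a\in[\sqrt{\lambda_1^*},1/\sqrt{3\eta}]$. Since $\bSigma\preceq\lambda_1^*\bI_d$ and $\bX_t\bX_t^\top\succeq\zero$, the largest eigenvalue of $\bM_t$ is at most $1+\eta\lambda_1^*$; since $\bX_t\bX_t^\top\preceq\sigma_1^2(\bX_t)\bI_d\preceq a^2\bI_d$ and $\bSigma\succeq\zero$, the smallest eigenvalue of $\bM_t$ is at least $1-\eta a^2\ge 1-\eta/(3\eta)\cdot\frac13\cdot 3=\ldots$ — more carefully, $1-\eta a^2\ge 1-\eta\cdot\frac{1}{3\eta}=1-\frac13=\frac23>-(1+\eta\lambda_1^*)$, so $\|\bM_t\|_{\mathrm{op}}=\max\{1+\eta\lambda_1^*,\,|1-\eta a^2|\}\le 1+\eta\lambda_1^*$ whenever $\eta a^2\le 2+\eta\lambda_1^*$, which holds comfortably. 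Thus $\sigma_1(\bX_{t+1})\le(1+\eta\lambda_1^*)\sigma_1(\bX_t)$, but this crude bound is not enough by itself — it would blow up. The correct refinement is to use that $\bX_t\bX_t^\top$ is \emph{not} negligible: on the eigenspace where $\bX_t v$ has its largest component, $\bX_t\bX_t^\top$ acts with eigenvalue close to $\sigma_1^2(\bX_t)$. Concretely, I would take $v$ a top right-singular vector of $\bX_t$ so that $\bX_t\bX_t^\top(\bX_t v)=\sigma_1^2(\bX_t)\,\bX_t v$, giving $\bX_{t+1}v=(\bI_d+\eta\bSigma)\bX_t v-\eta\sigma_1^2(\bX_t)\bX_t v$. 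Hmm — but $v$ achieving $\sigma_1(\bX_{t+1})$ need not coincide with the top singular vector of $\bX_t$, so the cleanest route is the variational one: bound $\sigma_1(\bX_{t+1})$ by analyzing the map $g(s)=(1+\eta\lambda_1^*-\eta s)\sqrt{s}$ for $s=\sigma_1^2(\bX_t)$, after first showing $\sigma_1(\bX_{t+1})\le\sigma_1(\bX_t)\cdot\max\{1+\eta\lambda_1^*-\eta\sigma_1^2(\bX_t),\,\eta\sigma_1^2(\bX_t)-1\}$ — actually the simplest self-contained claim is: if $\sigma_1^2(\bX_t)\le a^2$ then either $\sigma_1(\bX_t)$ already satisfies the bound and one pushes forward, or one shows the quadratic $h(s)=s(1+\eta\lambda_1^*-\eta s)^2$ is increasing on $[0,a^2]$ and $h(a^2)\le a^2$.

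So the key computational step I would carry out is: show that the function $\phi(s)=s\,(1+\eta\lambda_1^*-\eta s)^2$ satisfies $\phi(s)\le a^2$ for all $s\in[0,a^2]$ whenever $a\in[\sqrt{\lambda_1^*},1/\sqrt{3\eta}]$ and $\eta\le 1/(3\lambda_1^*)$. Since $\sigma_1^2(\bX_{t+1})\le\phi(\sigma_1^2(\bX_t))$ (which itself needs the argument above, treating the top singular direction carefully, using $0\preceq\bX_t\bX_t^\top$ and $\bX_t\bX_t^\top\preceq\sigma_1^2(\bX_t)\bI$ to sandwich the multiplier), monotone-plus-boundary gives the claim, and then induction on $t'$ finishes it. \textbf{The main obstacle} I anticipate is precisely the reduction $\sigma_1^2(\bX_{t+1})\le\phi(\sigma_1^2(\bX_t))$: the operator $\bM_t=\bI_d+\eta\bSigma-\eta\bX_t\bX_t^\top$ and $\bX_t$ do not commute, so one cannot diagonalize simultaneously, and one must argue that the "worst" direction for $\bX_{t+1}$ still only picks up a factor bounded by $\max_{0\le\mu\le\lambda_1^*,\ 0\le\nu\le\sigma_1^2(\bX_t)}|1+\eta\mu-\eta\nu|$ times $\sigma_1(\bX_t)$, and then optimize this product over $\nu$ at $\nu=\sigma_1^2(\bX_t)$ versus the $a^2$ cap — a short but slightly delicate case analysis on the sign of $1+\eta\lambda_1^*-\eta\sigma_1^2(\bX_t)$. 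Everything else is elementary calculus on a cubic/quartic in $s$, and the constraint $a\le 1/\sqrt{3\eta}$ is exactly what keeps the relevant multiplier in $[-1,1+\eta\lambda_1^*]$ so that $\phi$ stays below $a^2$.
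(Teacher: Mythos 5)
Your overall architecture matches the paper's: establish the one-step contraction $\sigma_1(\bX_{t+1})\leq(1+\eta\lambda_1^*-\eta\sigma_1^2(\bX_t))\cdot\sigma_1(\bX_t)$, then check by elementary calculus that the map $s\mapsto(1+\eta\lambda_1^*-\eta s^2)s$ keeps $[0,a]$ invariant for $a\in[\sqrt{\lambda_1^*},1/\sqrt{3\eta}]$ (your $\phi(s)\leq a^2$ claim is exactly the paper's two-case argument: if $\sigma_1(\bX_t)\geq\sqrt{\lambda_1^*}$ the multiplier is at most $1$, otherwise push the monotone map to the boundary value $\sqrt{\lambda_1^*}\leq a$), and finish by induction. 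The scalar part of your plan is correct.

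The genuine gap is in the step you yourself flag as the main obstacle, and your sketched resolution of it would fail. You propose to bound the worst direction's gain by $\max_{0\leq\mu\leq\lambda_1^*,\,0\leq\nu\leq\sigma_1^2(\bX_t)}|1+\eta\mu-\eta\nu|$ and then "optimize over $\nu$ at $\nu=\sigma_1^2(\bX_t)$"; but that maximum is attained at $\nu=0$ and equals $1+\eta\lambda_1^*$, i.e.\ exactly the crude bound $\|\bI_d+\eta\bSigma-\eta\bX_t\bX_t^\top\|\leq 1+\eta\lambda_1^*$ that you already conceded is insufficient, and there is no reason the maximizing direction for $\bX_{t+1}$ sees $\bX_t\bX_t^\top$ acting with eigenvalue $\sigma_1^2(\bX_t)$. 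The fix (this is the paper's Lemma~\ref{lma:3.2}) is not a simultaneous-diagonalization or worst-multiplier argument on the composite operator, but a triangle-inequality split \emph{before} taking norms:
\[
\bX_{t+1}=\bX_t(\bI_r-\eta\bX_t^\top\bX_t)+\eta\bSigma\bX_t .
\]
The first summand shares its singular vectors with $\bX_t$ (write $\bX_t=\bU\bD\bV^\top$; it equals $\bU(\bD-\eta\bD^3)\bV^\top$), so its singular values are exactly $(1-\eta\sigma_i^2(\bX_t))\sigma_i(\bX_t)$, and monotonicity of $s\mapsto s-\eta s^3$ on $[0,1/\sqrt{3\eta}]$ identifies the largest one as $(1-\eta\sigma_1^2(\bX_t))\sigma_1(\bX_t)$; the second summand has operator norm at most $\eta\lambda_1^*\sigma_1(\bX_t)$. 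Adding the two gives the desired one-step bound with no commutation issue at all — the penalty term is applied coherently per singular direction within its own summand, which is precisely what your composite-operator bound cannot see. With this lemma in hand, the rest of your argument goes through as you wrote it.
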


\begin{proof}
	    Lemma \ref{lma:3.2} states that if $\sigma_1(\bX_t)\leq1/\sqrt{3\eta}$, then the following inequality holds 
	    \$
	    \sigma_1(\bX_{t+1})\leq(1+\eta\lambda_1-\eta\sigma_1^2(\bX_t))\cdot\sigma_1(\bX_t).
	    \$
     \begin{itemize}
         \item If $\sqrt{\lambda_1}\leq\sigma_1(\bX_t)\leq a$, the above inequality implies that $\sigma_1(\bX_{t+1})\leq\sigma_1(\bX_t)\leq a$.
         \item If $\sigma_1(\bX_t)\leq\sqrt{\lambda_1}\leq a$, it follows that
		\$
		\sigma_1(\bX_{t+1})\leq (1+\eta\lambda_1-\eta\lambda_1)\sqrt{\lambda_1}\leq a.
		\$ 
        This uses the fact that $g_1(s)=(1+\eta\lambda_1-\eta s^2)s$ is increasing on $[0,1/\sqrt{3\eta}]$. 
     \end{itemize}
         By induction, we have $\sigma_1(\bX_{t'})\leq a$ for all $t'\geq t$.
	\end{proof}

Lemma \ref{lma:inv2} demonstrates that if $\sigma_1(\bX_t)\leq\sqrt{2\lambda_1}$, $\sigma_1^2(\bJ_t)\leq a$, and $a\geq\lambda_{r+1}$, then $\sigma_1^2(\bJ_{t+1})\leq a$. Combining with Lemma \ref{lma:inv1}, it implies that
\$
\cS_2=\{\bX=\begin{pmatrix}
    \bU\\
    \bJ
\end{pmatrix}\in\RR^{d\times r}\mid\sigma_1(\bX)\leq\sqrt{2\lambda_1},\sigma_1^2(\bJ)\leq a\}
\$
is an absorbing set for GD, provided that $a\geq\lambda_{r+1}$ and $\eta\leq\frac{1}{12\lambda_1}$. Here $\bU$ and $\bJ$ are the top $r$ rows and the $(r+1)$-to-$d$-th rows of $\bX$ respectively. 

\begin{lemma}\label{lma:inv2}
   
    Suppose $\eta\leq\frac{1}{12\lambda_1}$, $\sigma_1^2(\bX_t)\leq2\lambda_1$, and $a\geq \lambda_{r+1}$. If $\sigma_1^2(\bJ_t)\leq a$, then $\sigma_1^2(\bJ_{t+1})\leq a$.
    
\end{lemma}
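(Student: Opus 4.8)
The plan is to control $\sigma_1(\bJ_{t+1})$ directly from the one-step recursion, in the spirit of the proof of Lemma~\ref{lma:inv1}. If $a>2\lambda_1^*$ the claim is immediate, since $\sigma_1(\bJ_t)\le\sigma_1(\bX_t)\le\sqrt{2\lambda_1^*}$ and Lemma~\ref{lma:inv1}, applied with $a_0=\sqrt{2\lambda_1^*}\in[\sqrt{\lambda_1^*},1/\sqrt{3\eta}]$, gives $\sigma_1(\bJ_{t+1})\le\sigma_1(\bX_{t+1})\le\sqrt{2\lambda_1^*}<\sqrt a$. So assume $\lambda_{r+1}^*\le a\le2\lambda_1^*$, and use throughout the two facts $\sigma_1^2(\bX_t)\le2\lambda_1^*$ and $\bX_t^\top\bX_t=\bU_t^\top\bU_t+\bJ_t^\top\bJ_t\succeq\bJ_t^\top\bJ_t$.

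Write the update as $\bJ_{t+1}=(\bI_{d-r}+\eta\bLambda_{\res})\bJ_t-\eta\bJ_t\bX_t^\top\bX_t$, fix a unit vector $v\in\RR^r$ attaining $\|\bJ_{t+1}v\|=\sigma_1(\bJ_{t+1})$, and split $\bX_t^\top\bX_t v=m_v v+w_v$ with $m_v\coloneqq\langle v,\bX_t^\top\bX_t v\rangle\in[\|\bJ_tv\|^2,2\lambda_1^*]$ and $w_v\perp v$, so that $\bJ_{t+1}v=(\bI_{d-r}+\eta\bLambda_{\res}-\eta m_v\bI_{d-r})\bJ_tv-\eta\bJ_tw_v$. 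The first piece is the principal term: because $\bLambda_{\res}$ is diagonal with entries in $[0,\lambda_{r+1}^*]$ and $m_v\le2\lambda_1^*$, every eigenvalue of $\bI+\eta\bLambda_{\res}-\eta m_v\bI$ is positive and at most $1+\eta(\lambda_{r+1}^*-m_v)$, so its action on $\bJ_tv$ has norm $\le(1+\eta(\lambda_{r+1}^*-m_v))\|\bJ_tv\|$, and since $m_v\ge\|\bJ_tv\|^2$ this is $\le\sqrt{h(\|\bJ_tv\|^2)}$ with $h(s)\coloneqq s(1+\eta(\lambda_{r+1}^*-s))^2$. The scalar facts I need, all consequences of $\eta\le1/(12\lambda_1^*)$, are that $h$ is strictly increasing on $[0,2\lambda_1^*]$ (because $h'(s)=(1+\eta(\lambda_{r+1}^*-s))(1+\eta\lambda_{r+1}^*-3\eta s)$ with both factors positive there), that $h(\lambda_{r+1}^*)=\lambda_{r+1}^*$, and therefore that $h([0,a])\subseteq[0,a]$ whenever $\lambda_{r+1}^*\le a\le2\lambda_1^*$, since $h(a)=a(1-\eta(a-\lambda_{r+1}^*))^2\le a$. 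Consequently the principal piece already obeys $\|(\bI+\eta\bLambda_{\res}-\eta m_v\bI)\bJ_tv\|^2\le h(\|\bJ_tv\|^2)\le a$, using $\|\bJ_tv\|^2\le\sigma_1^2(\bJ_t)\le a$, so the content of the lemma is that the perturbation $\eta\bJ_tw_v$ does not break this.

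For the perturbation I would use $\|w_v\|^2=\|\bX_t^\top\bX_t v\|^2-m_v^2\le2\lambda_1^*m_v-m_v^2=m_v(2\lambda_1^*-m_v)$, the bound $\|\bJ_tw_v\|\le\sigma_1(\bJ_t)\|w_v\|$, and the fact that $w_v$ is the part of $(\bU_t^\top\bU_t+\bJ_t^\top\bJ_t)v$ orthogonal to $v$, combined with $\bU_t^\top\bU_t\preceq2\lambda_1^*\bI-\bJ_t^\top\bJ_t$. Expanding $\sigma_1^2(\bJ_{t+1})=\|(\bI+\eta\bLambda_{\res}-\eta m_v\bI)\bJ_tv\|^2-2\eta\langle(\bI+\eta\bLambda_{\res}-\eta m_v\bI)\bJ_tv,\bJ_tw_v\rangle+\eta^2\|\bJ_tw_v\|^2$, there are two easy regimes. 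When $\|\bJ_tv\|^2$ is comfortably below $\lambda_{r+1}^*$ there is $\Omega(1)$ room between $h(\|\bJ_tv\|^2)$ and $a$, which absorbs the $O(\eta)$ perturbation once $\eta\le1/(12\lambda_1^*)$; when $\|\bJ_tv\|^2$ is comfortably above $\lambda_{r+1}^*$ the map $h$ contracts by an $\Omega(\eta)$ amount, which dominates the $O(\eta)$ perturbation.

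The delicate regime, and the main obstacle, is $\|\bJ_tv\|^2\approx\sigma_1^2(\bJ_t)\approx\lambda_{r+1}^*\approx a$, where $h$ neither gains nor loses and there is essentially no slack. There I would exploit that the cross term is effectively of order $\eta^2$: writing $\langle(\bI+\eta\bLambda_{\res}-\eta m_v\bI)\bJ_tv,\bJ_tw_v\rangle=(1-\eta m_v)\langle\bJ_t^\top\bJ_tv,w_v\rangle+\eta\langle\bJ_t^\top\bLambda_{\res}\bJ_tv,w_v\rangle$, the second summand is $O(\eta)$, while the first vanishes to leading order because $\langle\bJ_t^\top\bJ_tv,w_v\rangle=0$ when $v$ is a top right-singular vector of $\bJ_t$ (and in general $\langle\bJ_t^\top\bJ_tv,w_v\rangle$ is controlled via $\|\bJ_t^\top\bJ_tv\|^2\ge\|\bJ_tv\|^4$ and $\bU_t^\top\bU_t\preceq2\lambda_1^*\bI-\bJ_t^\top\bJ_t$). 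Using also $\|\bJ_tw_v\|\le\sigma_2(\bJ_t)\|w_v\|$ when $v$ is a top right-singular vector (since then $w_v\perp v$), the relevant inequality in this regime collapses to one of the form $(1-\eta\|\bU_tv\|^2)^2\lambda_{r+1}^*+\eta^2\|\bJ_tw_v\|^2\le\lambda_{r+1}^*$, which holds because $\eta\sigma_1^2(\bU_t)\le2\eta\lambda_1^*\le1$. Carrying out this bookkeeping, most cleanly with a short case split on $\sigma_1^2(\bJ_t)$ relative to $\lambda_{r+1}^*$, gives $\sigma_1^2(\bJ_{t+1})\le a$; the absorbing-set claim for $\cS_2$ then follows by combining this bound with Lemma~\ref{lma:inv1} and induction.
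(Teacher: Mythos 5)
Your reduction to $\lambda_{r+1}^*\le a\le 2\lambda_1^*$ and your treatment of the principal term are fine: the scalar facts about $h(s)=s(1+\eta(\lambda_{r+1}^*-s))^2$ are correct and are essentially the same monotonicity mechanism the paper uses. The gap is in the perturbation analysis, and it sits exactly where the lemma has no slack. Your test vector $v$ is chosen as a maximizer for $\bJ_{t+1}$, i.e.\ a top right-singular vector of $\bJ_{t+1}$, yet every cancellation you invoke in the delicate regime --- $\langle\bJ_t^\top\bJ_t v,w_v\rangle=0$ and $\|\bJ_t w_v\|\le\sigma_2(\bJ_t)\|w_v\|$ --- holds only when $v$ is a top right-singular vector of $\bJ_t$, which it need not be. For the actual $v$, the cross term $-2\eta\langle(\bI_{d-r}+\eta\bLambda_{\res}-\eta m_v\bI_{d-r})\bJ_t v,\bJ_t w_v\rangle$ is a priori of size $\asymp\eta\lambda_1^* a$, since $\|w_v\|$ can be of order $\lambda_1^*$; with $\eta\le 1/(12\lambda_1^*)$ that is a constant fraction of $a$. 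The slack $a-h(\|\bJ_t v\|^2)$ beats it only when $\|\bJ_t v\|^2$ is below $a$ by a constant fraction, and the $h$-contraction beats it only when $\|\bJ_t v\|^2-\lambda_{r+1}^*\gtrsim\lambda_1^*$, which generally fails; so your ``two easy regimes'' leave uncovered a wide middle range, not just a thin shell around $\lambda_{r+1}^*$. Inside that range, the parenthetical ``controlled via $\|\bJ_t^\top\bJ_t v\|^2\ge\|\bJ_t v\|^4$ and $\bU_t^\top\bU_t\preceq 2\lambda_1^*\bI_r-\bJ_t^\top\bJ_t$'' is not a bound on $\langle\bJ_t^\top\bJ_t v,w_v\rangle$, and the inequality you say the problem ``collapses to'' is asserted rather than derived. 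As written, the proof is incomplete.

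For comparison, the paper avoids this bookkeeping entirely by proving a direction-free one-step recursion (its Lemma~\ref{lma:3.5}): writing \eqref{equ:2.2} as
\[
\bJ_{t+1}=\Bigl(\tfrac12\bJ_t-\eta\bJ_t\bJ_t^\top\bJ_t\Bigr)+\Bigl(\tfrac14\bI_{d-r}+\eta\bLambda_{\res}\Bigr)\bJ_t+\bJ_t\Bigl(\tfrac14\bI_r-\eta\bU_t^\top\bU_t\Bigr)
\]
and using $\sigma_1(\bA+\bB)\le\sigma_1(\bA)+\sigma_1(\bB)$, where the cubic block has top singular value exactly $\sigma_1(\bJ_t)/2-\eta\sigma_1^3(\bJ_t)$ because $s/2-\eta s^3$ is increasing on the relevant range. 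This extracts the $-\eta\sigma_1^2(\bJ_t)$ contraction uniformly over directions, with no cross terms, giving $\sigma_1(\bJ_{t+1})\le(1+\eta(\lambda_{r+1}^*-\sigma_1^2(\bJ_t)-\sigma_r^2(\bU_t)))\sigma_1(\bJ_t)$; the lemma then follows from the monotonicity of $s\mapsto(1+\eta(\lambda_{r+1}^*-s^2))s$, exactly your scalar step. To salvage your vectorial route you would need a genuine quantitative bound on the cross term valid for the maximizing $v$ of $\bJ_{t+1}$ (or uniformly over all unit $v$); that is the missing ingredient.
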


\begin{proof}
	    By Lemma \ref{lma:3.5}, we have
	    \$
	    \sigma_1(\bJ_{t+1})\leq(1+\eta(\lambda_{r+1}-\sigma_1^2(\bJ_t)) )\cdot\sigma_1(\bJ_t).
	    \$
     \begin{itemize}
         \item If $\lambda_{r+1}<\sigma_1^2(\bJ_{t})\leq a$, then it follows that $\sigma_1^2(\bJ_{t+1})\leq\sigma_1^2(\bJ_t)\leq a$.
         \item If $\sigma_1^2(\bJ_{t})\leq\lambda_{r+1}\leq a$, then
	    \$
	    \sigma_1^2(\bJ_{t+1})\leq(1+\eta(\lambda_{r+1}-\lambda_{r+1}))^2\lambda_{r+1}\leq a.
	    \$
     This uses the observation that $g_2(s)=(1+\eta(\lambda_{r+1}-s^2))s$ is increasing on $[0,1/\sqrt{3\eta}]$.
     \end{itemize}
	This concludes the proof.
	\end{proof}
	
Lemma \ref{lma:inv3} is the last piece needed to show that region $\cR$ is an absorbing set for GD.

\begin{lemma}\label{lma:inv3}
{
    Suppose $\eta\leq\frac{\Delta^2}{32\lambda_1^{3}}$, $\sigma_1(\bX_t)\leq\sqrt{2\lambda_1}$, and $\sigma_1^2(\bJ_t)\leq\lambda_r-\Delta/2$. If $\sigma_r^2(\bU_t)\geq \Delta/4$, then $\sigma_r^2(\bU_{t+1})\geq\Delta/4$.
}
\end{lemma}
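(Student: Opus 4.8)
The plan is to control $\sigma_r(\bU_{t+1})$ from below using the update rule \eqref{equ:2.1}. Write $\bU_{t+1} = \bU_t + \eta\bLambda_r\bU_t - \eta\bU_t\bX_t^\top\bX_t = \bU_t(\bI_r - \eta\bX_t^\top\bX_t) + \eta\bLambda_r\bU_t$. Since $\sigma_1^2(\bX_t)\leq 2\lambda_1^*$ and $\eta\leq \Delta^2/(32\lambda_1^{*3}) \leq 1/(2\lambda_1^*)$ (as $\Delta\leq\lambda_1^*$), the matrix $\bI_r - \eta\bX_t^\top\bX_t$ is positive semi-definite, so a clean way to proceed is to lower bound $\sigma_r(\bU_{t+1})$ via a Weyl-type inequality: for any unit vector $v\in\RR^r$, $\|\bU_{t+1}v\| \geq \|\bU_t(\bI_r-\eta\bX_t^\top\bX_t)v\| - \eta\|\bLambda_r\bU_t v\|$ is the wrong direction, so instead I would pick $v$ to be the bottom right-singular vector of $\bU_{t+1}$ and instead bound things the other way, or — cleaner — use $\sigma_r(\bU_{t+1}) = \min_{\|v\|=1}\|\bU_{t+1}v\|$ and for each such $v$ write $\bU_{t+1}v = \bU_t\big[(\bI_r-\eta\bX_t^\top\bX_t) + \eta\bU_t^{-1}\bLambda_r\bU_t\big]v$ — but $\bU_t$ need not be symmetric. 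So the pragmatic route is: $\|\bU_{t+1}v\| \geq \|\bU_t v\| - \eta\|\bU_t\bX_t^\top\bX_t v\| + (\text{contribution of }\eta\bLambda_r\bU_t v)$, handled by noting $\bLambda_r \succeq \lambda_r^*\bI_r$ so $\langle \bU_{t+1}v, \bU_t v\rangle \geq \|\bU_t v\|^2 + \eta\lambda_r^*\|\bU_t v\|^2 - \eta\langle \bU_t\bX_t^\top\bX_t v, \bU_t v\rangle$.

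Concretely, I would take $v$ a unit vector achieving $\sigma_r(\bU_{t+1}) = \|\bU_{t+1}v\|$ and lower bound $\|\bU_{t+1}v\|\cdot\|\bU_tv\| \geq \langle\bU_{t+1}v,\bU_tv\rangle = \|\bU_tv\|^2 + \eta v^\top\bU_t^\top\bLambda_r\bU_tv - \eta v^\top\bU_t^\top\bU_t\bX_t^\top\bX_tv$. Bounding $v^\top\bU_t^\top\bLambda_r\bU_tv \geq \lambda_r^*\|\bU_tv\|^2$ and $|v^\top\bU_t^\top\bU_t\bX_t^\top\bX_tv| \leq \sigma_1^2(\bU_t)\sigma_1^2(\bX_t) \leq (\lambda_r^* - \Delta/2 + \text{?})\cdot 2\lambda_1^*$ — wait, $\sigma_1^2(\bU_t)$ is bounded by $\sigma_1^2(\bX_t)\leq 2\lambda_1^*$, not the tighter quantity; but I can instead keep one factor as $\|\bU_tv\|^2$: $v^\top\bU_t^\top\bU_t\bX_t^\top\bX_tv \leq \|\bU_tv\|^2\cdot\sigma_1^2(\bX_t) \leq 2\lambda_1^*\|\bU_tv\|^2$ is too lossy since $2\lambda_1^* > \lambda_r^*$. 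The fix: split $\bX_t^\top\bX_t = \bU_t^\top\bU_t + \bJ_t^\top\bJ_t$, so the cross term is $\|\bU_t\bU_tv\|^2$-ish plus $v^\top\bU_t^\top\bU_t\bJ_t^\top\bJ_tv \leq \|\bU_tv\|^2\sigma_1^2(\bJ_t) \leq \|\bU_tv\|^2(\lambda_r^*-\Delta/2)$. That gives $\langle\bU_{t+1}v,\bU_tv\rangle \geq \|\bU_tv\|^2(1 + \eta\lambda_r^* - \eta\sigma_1^2(\bJ_t)) - \eta\|\bU_t^\top\bU_tv\|^2 \geq \|\bU_tv\|^2(1+\eta\Delta/2) - \eta\|\bU_t^\top\bU_tv\|^2$. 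Now I need to handle the $\|\bU_t^\top\bU_tv\|^2$ term; bounding it crudely by $\sigma_1^2(\bU_t)\|\bU_tv\|^2 \leq 2\lambda_1^*\|\bU_tv\|^2$ reintroduces the loss, so the real work is to argue that when $\sigma_r(\bU_t)$ is near the boundary $\sqrt{\Delta/4}$, the relevant singular value direction has $\|\bU_t^\top\bU_tv\|$ comparable to $\sigma_r^2(\bU_t)\|v\|$, i.e.\ the "bad" direction $v$ is (approximately) a bottom singular vector of $\bU_t$ too.

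This is where the main obstacle lies: one must argue that if $\sigma_r^2(\bU_t)$ is just at or above $\Delta/4$, then $\sigma_r^2(\bU_{t+1})\geq\Delta/4$ as well, which requires a genuine two-case or monotonicity argument. The natural split is: (i) if $\sigma_r^2(\bU_t) \geq \sqrt{2\lambda_1^*}\cdot\text{(something)}$, far enough above $\Delta/4$, then even a one-step decrease bounded by $O(\eta\lambda_1^*\sigma_r^2(\bU_t))$ keeps us above $\Delta/4$, using $\eta \leq \Delta^2/(32\lambda_1^{*3})$; (ii) if $\sigma_r^2(\bU_t)$ is in a narrow band just above $\Delta/4$, one shows $\sigma_r(\bU_{t+1}) \geq \sigma_r(\bU_t)$ — i.e.\ the dynamics push $\sigma_r(\bU_t)$ upward there — by showing the effective per-step factor $1 + \eta\lambda_r^* - \eta\sigma_1^2(\bJ_t) - \eta\sigma_1^2(\bU_t) \geq 1 + \eta\Delta/2 - \eta\sigma_1^2(\bU_t) \geq 1$ as long as $\sigma_1^2(\bU_t) \leq \lambda_r^* - \Delta/2$, which holds when $\bU_t$ itself is small enough, but need NOT hold in general. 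So the genuinely delicate point is disentangling $\sigma_r(\bU_t)$ (small) from $\sigma_1(\bU_t)$ (possibly large), and I expect the proof to invoke a companion technical lemma (analogous to Lemma \ref{lma:3.2}, \ref{lma:3.5}) giving a sharp one-step lower bound on $\sigma_r(\bU_{t+1})$ of the form $\sigma_r(\bU_{t+1}) \geq (1 + \eta\lambda_r^* - \eta\sigma_1^2(\bX_t))\sigma_r(\bU_t)$ when $\sigma_r(\bU_t)$ is the relevant minimal direction, then checking that the function $s \mapsto (1+\eta\lambda_r^* - \eta\cdot 2\lambda_1^*)s$ — evaluated with the worst-case magnitude — still lands above $\sqrt{\Delta/4}$ under the stated step-size restriction, combined with the fact that $g(s) = (1+\eta\lambda_r^* - \eta s^2)s$-type maps are increasing on the relevant interval so the boundary case $\sigma_r^2(\bU_t) = \Delta/4$ is the extremal one to check. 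Once that extremal inequality $(1 + \eta\lambda_r^* - 2\eta\lambda_1^*)\sqrt{\Delta/4} \geq \sqrt{\Delta/4}$-with-correction is verified using $\eta \leq \Delta^2/(32\lambda_1^{*3})$, the claim follows by induction.
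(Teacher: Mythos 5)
You correctly identify the crux — the cross term couples $\sigma_r(\bU_t)$ with $\sigma_1(\bU_t)$ (or $\sigma_1(\bX_t)$), which can be as large as $\sqrt{2\lambda_1^*}$ — but your proposal does not actually resolve it, and the resolution you gesture at would fail. The companion bound you posit, $\sigma_r(\bU_{t+1})\geq(1+\eta\lambda_r^*-\eta\sigma_1^2(\bX_t))\,\sigma_r(\bU_t)$ evaluated at the worst case $\sigma_1^2(\bX_t)=2\lambda_1^*$, has multiplicative factor $1+\eta(\lambda_r^*-2\lambda_1^*)<1$, so the extremal check $(1+\eta\lambda_r^*-2\eta\lambda_1^*)\sqrt{\Delta/4}\geq\sqrt{\Delta/4}$ you propose to "verify using $\eta\leq\Delta^2/(32\lambda_1^{*3})$" is simply false: the deficit is of order $\eta(2\lambda_1^*-\lambda_r^*)$, i.e.\ first order in $\eta$, and shrinking $\eta$ cannot fix a per-step multiplicative loss when the starting point is exactly at the boundary $\sigma_r^2(\bU_t)=\Delta/4$. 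Your case (ii) is the essential case of the lemma, and you yourself note that the condition you would need there ($\sigma_1^2(\bU_t)\leq\lambda_r^*-\Delta/2$) need not hold; so the argument as sketched has a genuine gap precisely at the point the lemma is about.

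The paper closes this gap differently (its Lemma \ref{lma:3.8a}): instead of working with a minimizing vector for $\bU_{t+1}$, it expands $\bU_{t+1}\bU_{t+1}^\top$ and decomposes it into positive semi-definite blocks of the form $\bU_t\bM\bU_t^\top$, where $\bM$ is built from $\bU_t^\top\bU_t$ and where $\bJ_t^\top\bJ_t$ is replaced by $\sigma_1^2(\bJ_t)\bI_r$ at the cost of adding another PSD matrix. Because each such block has eigenvalues $g(\sigma_i^2(\bU_t))$ for a scalar function $g(s)$ of the form $(c-2\eta s)s$ that is increasing on the relevant range, the $r$-th eigenvalue is controlled by $\sigma_r^2(\bU_t)$ alone — the large singular values of $\bU_t$ never enter the subtracted term. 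This yields $\sigma_r^2(\bU_{t+1})\geq\bigl(1+2\eta(\lambda_r^*-\sigma_1^2(\bJ_t)-\sigma_r^2(\bU_t))\bigr)\sigma_r^2(\bU_t)-4\eta^2\lambda_1^{*3}$, and with $\sigma_1^2(\bJ_t)\leq\lambda_r^*-\Delta/2$ the per-step factor becomes $1+\eta\Delta-2\eta\sigma_r^2(\bU_t)$, which at the extremal boundary $\sigma_r^2(\bU_t)=\Delta/4$ equals $1+\eta\Delta/2>1$; the genuine gain $\eta\Delta^2/8$ then absorbs the second-order loss $4\eta^2\lambda_1^{*3}$ exactly under the hypothesis $\eta\leq\Delta^2/(32\lambda_1^{*3})$. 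This PSD-decomposition step, which decouples $\sigma_r(\bU_t)$ from $\sigma_1(\bU_t)$, is the idea missing from your proposal.
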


\begin{proof}
	    Since $\eta\leq\frac{1}{32\lambda_1}$ and $\sigma_1^2(\bJ_t)\leq\lambda_{r}-\Delta/2$, by Lemma~\ref{lma:3.8a}, we have
	    \$
	    \sigma_r^2(\bU_{t+1})\geq(1+\eta\Delta-2\eta\sigma_r^2(\bU_t))\cdot \sigma_r^2(\bU_t)-4\eta^2\lambda_1^{3}.
	    \$
	    Since $g_3(s)=(1+\eta\Delta-2\eta s)s$ is increasing on $(-\infty,\frac{1}{4\eta}]$ and $\frac{\Delta}{4}\leq\sigma_r^2(\bU_t)\leq2\lambda_1\leq\frac{1}{4\eta}$, we have
	    \$
	    \sigma_{r}^2(\bU_{t+1})&\geq(1+\frac{\eta\Delta}{2})\cdot \frac{\Delta}{4}-4\eta^2\lambda_1^{3} \geq\frac{\Delta}{4},
	    \$
	    where the last inequality uses $\eta\leq\frac{\Delta^2}{32\lambda_1^{3}}$.
	\end{proof}

The following lemmas give certain singular value analysis that are used in prior lemmas and subsequent analysis. Lemma \ref{lma:3.2} establishes an upper bound for $\sigma_1(\bX_{t+1})$. 

\begin{lemma}
	\label{lma:3.2}
	    { If $\sigma_1(\bX_t)\leq 1/\sqrt{3\eta}$, then we have
	    \$
	    \sigma_1(\bX_{t+1})\leq(1+\eta\lambda_1-\eta\sigma_1^2(\bX_t))\cdot\sigma_1(\bX_t).
	    \$
	    }
	\end{lemma}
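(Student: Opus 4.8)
The plan is to reduce the estimate on $\sigma_1(\bX_{t+1})$ to an operator-norm bound for a single matrix, and then to recover the ``self-damping'' term $-\eta\sigma_1^2(\bX_t)$ by means of a symmetric square-root factorization. Write the update \eqref{equ:1.3} as $\bX_{t+1}=\bB_t\bX_t$ with the symmetric matrix $\bB_t:=\bI_d+\eta\bSigma-\eta\bX_t\bX_t^\top$. Then $\bX_{t+1}\bX_{t+1}^\top=\bB_t\bX_t\bX_t^\top\bB_t=C_tC_t^\top$, where $C_t:=\bB_t(\bX_t\bX_t^\top)^{1/2}$ and $(\bX_t\bX_t^\top)^{1/2}$ denotes the symmetric positive semidefinite square root. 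Comparing the largest eigenvalues of the two sides gives the identity $\sigma_1(\bX_{t+1})=\sigma_1(C_t)$, which is the crucial move: it makes the factor $-\eta\bX_t\bX_t^\top$ inside $\bB_t$ act exactly along the top left singular direction of $\bX_t$.

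Next I would split off the part of $C_t$ that commutes with $\bX_t\bX_t^\top$, namely $C_t=(\bX_t\bX_t^\top)^{1/2}(\bI_d-\eta\bX_t\bX_t^\top)+\eta\,\bSigma(\bX_t\bX_t^\top)^{1/2}$. By the triangle inequality and submultiplicativity of the spectral norm, $\sigma_1(\bX_{t+1})\le\sigma_1\big((\bX_t\bX_t^\top)^{1/2}(\bI_d-\eta\bX_t\bX_t^\top)\big)+\eta\,\sigma_1(\bSigma)\,\sigma_1\big((\bX_t\bX_t^\top)^{1/2}\big)$. Let $s_1\ge s_2\ge\cdots\ge0$ denote the singular values of $\bX_t$, so $(\bX_t\bX_t^\top)^{1/2}$ has eigenvalues $s_i$. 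Since $(\bX_t\bX_t^\top)^{1/2}$ and $\bI_d-\eta\bX_t\bX_t^\top$ are simultaneously diagonalizable, the first term equals $\max_i s_i|1-\eta s_i^2|$, and the hypothesis $\sigma_1(\bX_t)\le 1/\sqrt{3\eta}$ forces $\eta s_i^2\le 1/3<1$ (so $1-\eta s_i^2>0$) and makes $s\mapsto s-\eta s^3$ nondecreasing on $[0,1/\sqrt{3\eta}]$ (its derivative $1-3\eta s^2$ is nonnegative there); hence $\max_i s_i(1-\eta s_i^2)=s_1(1-\eta s_1^2)$. Using $\sigma_1(\bSigma)=\lambda_1^*$ (as $\bSigma$ is positive semidefinite) and $\sigma_1\big((\bX_t\bX_t^\top)^{1/2}\big)=s_1=\sigma_1(\bX_t)$, we conclude $\sigma_1(\bX_{t+1})\le s_1(1-\eta s_1^2)+\eta\lambda_1^*s_1=(1+\eta\lambda_1^*-\eta\sigma_1^2(\bX_t))\sigma_1(\bX_t)$.

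The main obstacle — essentially the only nontrivial point — is the first step. The naive submultiplicative bound $\sigma_1(\bX_{t+1})\le\sigma_1(\bB_t)\sigma_1(\bX_t)$ only yields $(1+\eta\lambda_1^*)\sigma_1(\bX_t)$ and throws away precisely the damping we need, because the largest eigenvalue of $\bB_t$ can be close to $1+\eta\lambda_1^*$ (it is attained near the top eigenspace of $\bSigma$), which need not align with the top singular direction of $\bX_t$. The rewriting $\sigma_1(\bX_{t+1})=\sigma_1(\bB_t(\bX_t\bX_t^\top)^{1/2})$ couples the two subspaces and isolates the contribution of $-\eta\bX_t\bX_t^\top$ along the direction where $\bX_t$ is largest; everything afterward is routine bookkeeping, with the monotonicity of $s\mapsto s-\eta s^3$ on $[0,1/\sqrt{3\eta}]$ being the one place where the hypothesis $\eta\sigma_1^2(\bX_t)\le 1/3$ is used. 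An equivalent route avoids the square root by testing $\bX_{t+1}^\top u$ against the top left singular vector $u$ of $\bX_{t+1}$ and expanding $u^\top\bX_t$ in the right singular basis of $\bX_t$, but the factorization above is cleaner.
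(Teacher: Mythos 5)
Your proof is correct and takes essentially the same approach as the paper's: both split the update into a self-damped term plus an $\eta\bSigma(\cdot)$ term via the triangle inequality, bound the latter by $\eta\lambda_1^*\sigma_1(\bX_t)$, and use the monotonicity of $s\mapsto s-\eta s^3$ on $[0,1/\sqrt{3\eta}]$ to identify the top singular value of the damped factor as $(1-\eta\sigma_1^2(\bX_t))\sigma_1(\bX_t)$. The only difference is cosmetic: the paper works directly with $\bX_t(\bI_r-\eta\bX_t^\top\bX_t)$, whose nonzero singular values are exactly the $(1-\eta\sigma_i^2(\bX_t))\sigma_i(\bX_t)$ you obtain, so your square-root factorization $(\bX_t\bX_t^\top)^{1/2}(\bI_d-\eta\bX_t\bX_t^\top)$ is a harmless but unnecessary detour.
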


    \begin{proof}
	    By the singular value inequality and \eqref{equ:9}, 
	    \#
	    \sigma_1(\bX_{t+1})&\leq\sigma_1(\bX_t(\bI_r-\eta\bX_t^\top\bX_t))+\eta\sigma_1(\bSigma\bX_t)\notag\\
	    &\leq\sigma_1(\bX_t(\bI_r-\eta\bX_t^\top\bX_t))+\eta\lambda_1\sigma_1(\bX_t),\label{equ:3.8}
	    \#
	    where we use $\sigma_1(\bSigma)=\lambda_1$.
		Observe that all $r$ singular values of $\bX_t(\bI_r-\eta\bX_t^\top\bX_t)$ are given by
		\$
		(1-\eta\sigma_i^2(\bX_t))\cdot\sigma_i(\bX_t),\ i=1,\ldots,r,
		\$ 
		since $\eta\sigma_1^2(\bX_t)\leq 1$. The function $g_4(s)=(1-\eta s^2)s$ is increasing on $[0,1/\sqrt{3\eta}]$. Hence, the fact $0\leq\sigma_i(\bX_t)\leq\sigma_1(\bX_t)\leq1/\sqrt{3\eta}$ implies that
		\$
		\sigma_1(\bX_t(\bI_r-\eta\bX_t^\top\bX_t))=(1-\eta\sigma_1^2(\bX_t))\cdot\sigma_1(\bX_t).
		\$
        Substituting this equality into \eqref{equ:3.8}, we conclude the proof.
	\end{proof}

Lemma \ref{lma:3.5} gives an upper bound for $\sigma_1(\bJ_{t+1})$.

 \begin{lemma}
	\label{lma:3.5}
	    { Suppose $\eta \leq \frac{1}{12\lambda_1}$ and $\sigma_1(\bX_t)\leq\sqrt{2\lambda_1}$, then we have
	    \$
	    \sigma_1(\bJ_{t+1})\leq(1+\eta(\lambda_{r+1}-\sigma_1^2(\bJ_t)-\sigma_r^2(\bU_t)) )\cdot\sigma_1(\bJ_t).
	    \$}
	\end{lemma}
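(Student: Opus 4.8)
\textbf{Proof plan for Lemma \ref{lma:3.5}.}
The plan is to start from the update rule \eqref{equ:2.2}, namely $\bJ_{t+1}=\bJ_t+\eta\bLambda_{\res}\bJ_t-\eta\bJ_t\bX_t^\top\bX_t$, and bound $\sigma_1(\bJ_{t+1})$ by splitting it into a ``contraction'' part involving $\bJ_t(\bI_r-\eta\bX_t^\top\bX_t)$ and a ``residual'' part $\eta\bLambda_{\res}\bJ_t$. First I would write
\$
\sigma_1(\bJ_{t+1})\leq\sigma_1\bigl(\bJ_t(\bI_r-\eta\bX_t^\top\bX_t)\bigr)+\eta\sigma_1(\bLambda_{\res}\bJ_t)\leq\sigma_1\bigl(\bJ_t(\bI_r-\eta\bX_t^\top\bX_t)\bigr)+\eta\lambda_{r+1}^*\sigma_1(\bJ_t),
\$
using $\sigma_1(\bLambda_{\res})=\lambda_{r+1}^*$. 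Under the hypothesis $\sigma_1(\bX_t)\leq\sqrt{2\lambda_1^*}$ and $\eta\leq\frac{1}{12\lambda_1^*}$, we get $\eta\sigma_1^2(\bX_t)\leq\frac{1}{6}<1$, so $\bI_r-\eta\bX_t^\top\bX_t\succ0$ and $\sigma_1\bigl(\bJ_t(\bI_r-\eta\bX_t^\top\bX_t)\bigr)=\norm{\bJ_t(\bI_r-\eta\bX_t^\top\bX_t)}_{\rm op}$.

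The key step is to bound $\norm{\bJ_t(\bI_r-\eta\bX_t^\top\bX_t)}_{\rm op}$ by $(1-\eta\sigma_r^2(\bU_t)-\eta\sigma_1^2(\bJ_t))\sigma_1(\bJ_t)$, or something sufficient for the stated conclusion. For any unit vector $\bv\in\RR^r$, I would estimate $\norm{\bJ_t(\bI_r-\eta\bX_t^\top\bX_t)\bv}^2$. Writing $\bX_t^\top\bX_t=\bU_t^\top\bU_t+\bJ_t^\top\bJ_t$, on the subspace where $\bJ_t\bv$ is large the factor $\bI_r-\eta\bX_t^\top\bX_t$ acts as a contraction by at least $1-\eta\lambda_{\min}(\bX_t^\top\bX_t)$; more carefully, taking the top right-singular vector $\bv_1$ of $\bJ_t$, one has $\norm{\bJ_t(\bI_r-\eta\bX_t^\top\bX_t)\bv_1}\le(1-\eta\bv_1^\top\bX_t^\top\bX_t\bv_1)\sigma_1(\bJ_t)$ when the bracket is nonnegative, and $\bv_1^\top\bX_t^\top\bX_t\bv_1=\bv_1^\top\bU_t^\top\bU_t\bv_1+\bv_1^\top\bJ_t^\top\bJ_t\bv_1\ge\sigma_r^2(\bU_t)+\sigma_1^2(\bJ_t)\cdot\frac{\norm{\bJ_t\bv_1}^2}{\sigma_1^2(\bJ_t)}$. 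Since $\bv_1$ is the top right-singular vector of $\bJ_t$, $\norm{\bJ_t\bv_1}=\sigma_1(\bJ_t)$, so $\bv_1^\top\bJ_t^\top\bJ_t\bv_1=\sigma_1^2(\bJ_t)$, giving $\bv_1^\top\bX_t^\top\bX_t\bv_1\ge\sigma_r^2(\bU_t)+\sigma_1^2(\bJ_t)$. One must check the bracket $1-\eta(\sigma_r^2(\bU_t)+\sigma_1^2(\bJ_t))$ is nonnegative: indeed $\sigma_r^2(\bU_t)+\sigma_1^2(\bJ_t)\le\sigma_1^2(\bX_t)\le2\lambda_1^*\le\frac{1}{6\eta}<\frac1\eta$, using the interlacing/sub-additivity $\sigma_r^2(\bU_t)+\sigma_1^2(\bJ_t)\le\sigma_1^2(\bX_t)$ which follows from $\bX_t^\top\bX_t=\bU_t^\top\bU_t+\bJ_t^\top\bJ_t\succeq\bJ_t^\top\bJ_t$ and $\succeq\sigma_r^2(\bU_t)$ on a suitable vector — I would make this precise by noting $\bv_1^\top\bX_t^\top\bX_t\bv_1\le\sigma_1^2(\bX_t)$. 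Actually the cleanest route is: $\sigma_1\bigl(\bJ_t(\bI_r-\eta\bX_t^\top\bX_t)\bigr)=\sigma_1(\bJ_t)\cdot\frac{\norm{\bJ_t(\bI-\eta\bX_t^\top\bX_t)\bv_1}}{\norm{\bJ_t\bv_1}}$ evaluated at the maximizing $\bv_1$, then bound using the displayed eigenvalue estimate. Combining with the residual term $\eta\lambda_{r+1}^*\sigma_1(\bJ_t)$ yields
\$
\sigma_1(\bJ_{t+1})\leq\bigl(1-\eta\sigma_r^2(\bU_t)-\eta\sigma_1^2(\bJ_t)+\eta\lambda_{r+1}^*\bigr)\sigma_1(\bJ_t)=\bigl(1+\eta(\lambda_{r+1}^*-\sigma_1^2(\bJ_t)-\sigma_r^2(\bU_t))\bigr)\sigma_1(\bJ_t),
\$
which is the claim.

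The main obstacle I anticipate is making the operator-norm bound on $\bJ_t(\bI_r-\eta\bX_t^\top\bX_t)$ rigorous: the naive bound $\sigma_1(\bJ_t)\sigma_1(\bI_r-\eta\bX_t^\top\bX_t)=\sigma_1(\bJ_t)(1-\eta\sigma_r^2(\bX_t))$ is too weak because $\sigma_r^2(\bX_t)$ need not exceed $\sigma_r^2(\bU_t)+\sigma_1^2(\bJ_t)$. The fix is to not decouple the two matrices but instead work with the quadratic form $\bv_1^\top(\bI_r-\eta\bX_t^\top\bX_t)^2\bv_1$ weighted implicitly through $\bJ_t$, i.e.\ to evaluate $\norm{\bJ_t(\bI_r-\eta\bX_t^\top\bX_t)}_{\rm op}$ directly by choosing the right test vector and using that the top right-singular direction of $\bJ_t$ automatically makes $\bv_1^\top\bJ_t^\top\bJ_t\bv_1$ as large as possible ($=\sigma_1^2(\bJ_t)$) while $\bv_1^\top\bU_t^\top\bU_t\bv_1\ge\sigma_r^2(\bU_t)$. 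The monotonicity argument (that $s\mapsto(1-\eta s)$ restricted to the relevant range is a genuine contraction since the bracket stays in $[0,1]$) needs the learning-rate and boundedness hypotheses, exactly as in Lemma \ref{lma:3.2}; I would cite $\eta\le\frac1{12\lambda_1^*}$ and $\sigma_1(\bX_t)\le\sqrt{2\lambda_1^*}$ to close this gap.
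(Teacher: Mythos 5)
The target you set up is the right one, and the outer step (peeling off $\eta\bLambda_{\res}\bJ_t$ and reducing to the claim $\sigma_1\bigl(\bJ_t(\bI_r-\eta\bX_t^\top\bX_t)\bigr)\le(1-\eta\sigma_r^2(\bU_t)-\eta\sigma_1^2(\bJ_t))\sigma_1(\bJ_t)$) is sound — that intermediate inequality is in fact true. But your justification of it has a genuine gap: you bound an operator norm by evaluating at a single test vector, the top right-singular vector $\bv_1$ of $\bJ_t$. Plugging in one vector only gives a \emph{lower} bound on $\sigma_1\bigl(\bJ_t(\bI_r-\eta\bX_t^\top\bX_t)\bigr)$; the maximizing direction of the product need not be $\bv_1$ (take $\bJ_t$ diagonal and $\bU_t^\top\bU_t$ much larger along $\bv_1$ than along the other directions — then the norm of $\bJ_t(\bI_r-\eta\bX_t^\top\bX_t)$ is attained away from $\bv_1$, where your computation says nothing). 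The identity you call the ``cleanest route,'' $\sigma_1\bigl(\bJ_t(\bI_r-\eta\bX_t^\top\bX_t)\bigr)=\sigma_1(\bJ_t)\,\norm{\bJ_t(\bI_r-\eta\bX_t^\top\bX_t)\bv_1}/\norm{\bJ_t\bv_1}$, is false in general. Moreover, even at $\bv_1$ the asserted inequality $\norm{\bJ_t(\bI_r-\eta\bX_t^\top\bX_t)\bv_1}\le(1-\eta\,\bv_1^\top\bX_t^\top\bX_t\bv_1)\sigma_1(\bJ_t)$ is unproved and generally fails at order $\eta^2$: writing $\bA=\bX_t^\top\bX_t$, one has $\norm{(\bI_r-\eta\bA)\bv_1}^2=(1-\eta\,\bv_1^\top\bA\bv_1)^2+\eta^2(\norm{\bA\bv_1}^2-(\bv_1^\top\bA\bv_1)^2)\ge(1-\eta\,\bv_1^\top\bA\bv_1)^2$, so Cauchy--Schwarz pushes in the wrong direction whenever $\bA\bv_1$ is not parallel to $\bv_1$; and since the lemma's conclusion has no $O(\eta^2)$ slack, these error terms cannot simply be absorbed.

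The repair is the decomposition the paper uses: split the identity into pieces so that each negative correction is attached to its own copy of $\bJ_t$, namely
\[
\bJ_{t+1}=\Bigl(\tfrac12\bJ_t-\eta\bJ_t\bJ_t^\top\bJ_t\Bigr)+\Bigl(\tfrac14\bI_{d-r}+\eta\bLambda_{\res}\Bigr)\bJ_t+\bJ_t\Bigl(\tfrac14\bI_r-\eta\bU_t^\top\bU_t\Bigr),
\]
and bound the three top singular values separately. The first term's singular values are $\sigma_i(\bJ_t)/2-\eta\sigma_i^3(\bJ_t)$, and since $s\mapsto s/2-\eta s^3$ is increasing on $[0,1/\sqrt{6\eta}]$ (here $\sigma_1(\bJ_t)\le\sqrt{2\lambda_1^*}\le1/\sqrt{6\eta}$ by the hypotheses), its norm equals $\sigma_1(\bJ_t)/2-\eta\sigma_1^3(\bJ_t)$, which is exactly where the $-\eta\sigma_1^2(\bJ_t)$ gain comes from; the second is at most $(1/4+\eta\lambda_{r+1}^*)\sigma_1(\bJ_t)$; and the third is at most $(1/4-\eta\sigma_r^2(\bU_t))\sigma_1(\bJ_t)$ because $\eta\sigma_1^2(\bU_t)\le2\eta\lambda_1^*\le1/4$ makes $\tfrac14\bI_r-\eta\bU_t^\top\bU_t$ positive semi-definite with norm $1/4-\eta\sigma_r^2(\bU_t)$. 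Summing gives the stated bound with the exact constants. So the conditions $\eta\le1/(12\lambda_1^*)$ and $\sigma_1(\bX_t)\le\sqrt{2\lambda_1^*}$ enter precisely through these monotonicity/PSD checks, not through a test-vector argument; as written, your key step does not go through and needs to be replaced by an argument of this type.
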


 \begin{proof}
	   The update rule \eqref{equ:J} of $\bJ_{t+1}$ can be decomposed as follows:
	   \$
	   \bJ_{t+1}=\underbrace{\frac{1}{2}\bJ_t-\eta\bJ_t\bJ_t^\top\bJ_t}_{\bB}+\underbrace{(\frac{1}{4}\bI_{d-r}+\eta\bLambda_{\rm res})\bJ_t}_{\bC}+\underbrace{\bJ_t(\frac{1}{4}\bI_r-\eta\bU_t^\top\bU_t)}_{\bD}.
	   \$
	   By the singular value inequality,
	   \$
	   \sigma_1(\bJ_{t+1})\leq\sigma_1(\bB)+\sigma_1(\bC)+\sigma_1(\bD). 
	   \$
	   Observe that all singular values of $\bB$ are given by
	   \$
	   \sigma_i(\bJ_t)/2-\eta\sigma_i^3(\bJ_t),\quad i=1,\ldots,d-r.
	   \$
	   Since $g_5(s)=s/2-\eta s^3$ is increasing on $[0,1/\sqrt{6\eta}]$, the condition $\sigma_i(\bJ_t)\leq\sigma_1(\bJ_t)\leq\sqrt{2\lambda_1}\leq1/\sqrt{6\eta}$ implies that
	   \$
	   \sigma_1(\bB)=\sigma_1(\bJ_t)/2-\eta\sigma_1^3(\bJ_t).
	   \$
	   For the second term $\bC$, it follows from the singular value inequality that
	   \$
	   \sigma_1(\bC)\leq\sigma_1(\frac{1}{4}\bI_{d-r}+\eta\bLambda_{\rm res})\sigma_1(\bJ_t)\leq(1/4+\eta\lambda_{r+1})\sigma_1(\bJ_t),
	   \$
	   where the second inequality uses $\eta\sigma_1(\bLambda_{\rm res})\leq\eta\lambda_1\leq1/4.$ For the third term $\bD$, since $\eta\sigma_1^2(\bU_t)\leq2\eta\lambda_1\leq1/4$, we have
	   \$
	   \sigma_1(\bD)\leq(1/4-\eta\sigma_r^2(\bU_t))\sigma_1(\bJ_t).
	   \$
	   Finally, we conclude the proof by combining the analysis of $\bB,\bC,$ and $\bD$.
	\end{proof}

Lemma \ref{lma:3.8a} provides an lower bound for $\sigma_r^2(\bU_{t+1})$. 
 
	\begin{lemma}\label{lma:3.8a}
	    { 
     Suppose $\eta\leq\frac{1}{32\lambda_1}$ and $\sigma_1(\bX_t)\leq\sqrt{2\lambda_1}$, then we have
	    \$
	    \sigma_r^2(\bU_{t+1})\geq(1+2\eta(\lambda_r-\sigma_1^2(\bJ_t)-\sigma_r^2(\bU_t)) )\cdot\sigma_r^2(\bU_t)-4\eta^2\lambda_1^{3}.
	    \$
	    }
\end{lemma}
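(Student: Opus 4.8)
The plan is to estimate $\sigma_r^2(\bU_{t+1})=\lambda_{\min}(\bU_{t+1}^\top\bU_{t+1})$ through a first–order-in-$\eta$ expansion whose linear part reproduces the claimed coefficient, with the quadratic remainder absorbed into the $\eta^2\lambda_1^{*3}$ slack. From \eqref{equ:2.1}, $\bU_{t+1}=\bU_t+\eta\bD$ with $\bD\coloneqq\bLambda_r\bU_t-\bU_t\bA$, where I abbreviate $\bA\coloneqq\bX_t^\top\bX_t=\bB+\bC$, $\bB\coloneqq\bU_t^\top\bU_t$, $\bC\coloneqq\bJ_t^\top\bJ_t$. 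Expanding and using that $\bLambda_r$ and $\bA$ are symmetric,
\$
\bU_{t+1}^\top\bU_{t+1}=\bB+\eta\bigl(2\,\bU_t^\top\bLambda_r\bU_t-\bB\bA-\bA\bB\bigr)+\eta^2\bD^\top\bD .
\$
Since $\bD^\top\bD\succeq\zero$, we may drop the last term for a lower bound; it then suffices to control the Rayleigh quotient of $\bB+\eta(2\bU_t^\top\bLambda_r\bU_t-\bB\bA-\bA\bB)$ at its minimal eigenvector.

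Let $v$ be a unit eigenvector realizing that minimum. From $\bLambda_r\succeq\lambda_r^*\bI_r$ we get $v^\top\bU_t^\top\bLambda_r\bU_t v=(\bU_t v)^\top\bLambda_r(\bU_t v)\ge\lambda_r^*\|\bU_t v\|^2=\lambda_r^*\,v^\top\bB v$, and since $\bB,\bC$ are symmetric and $\bA=\bB+\bC$, $v^\top(\bB\bA+\bA\bB)v=2\langle\bB v,\bA v\rangle=2\|\bB v\|^2+2\langle\bB v,\bC v\rangle$. Hence
\$
\sigma_r^2(\bU_{t+1})\ \ge\ (1+2\eta\lambda_r^*)\,v^\top\bB v-2\eta\|\bB v\|^2-2\eta\langle\bB v,\bC v\rangle .
\$
Now diagonalize $\bB$: write $v=\sum_i c_i q_i$ in an orthonormal eigenbasis with eigenvalues $\mu_i=\sigma_i^2(\bU_t)\in[0,2\lambda_1^*]$, so $v^\top\bB v=\sum_i c_i^2\mu_i=:S$ and $\|\bB v\|^2=\sum_i c_i^2\mu_i^2$; since $\mu_r=\sigma_r^2(\bU_t)$ is the smallest eigenvalue, $S\ge\mu_r$, and set $\delta\coloneqq S-\mu_r\ge0$.

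For the ``signal'' contribution, $(1+2\eta\lambda_r^*)S-2\eta\|\bB v\|^2=\sum_i c_i^2\psi(\mu_i)$ with $\psi(s)=(1+2\eta\lambda_r^*)s-2\eta s^2$; since $\mu_i+\mu_r\le 4\lambda_1^*$ and $\eta\le 1/(32\lambda_1^*)$, $\psi(\mu_i)-\psi(\mu_r)=(\mu_i-\mu_r)\bigl((1+2\eta\lambda_r^*)-2\eta(\mu_i+\mu_r)\bigr)\ge\tfrac34(\mu_i-\mu_r)$, so this contribution is at least $\psi(\sigma_r^2(\bU_t))+\tfrac34\delta=(1+2\eta\lambda_r^*)\sigma_r^2(\bU_t)-2\eta\sigma_r^4(\bU_t)+\tfrac34\delta$. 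For the ``coupling'' contribution, Cauchy--Schwarz gives $|\langle\bB v,\bC v\rangle|\le\|\bB v\|\,\|\bC v\|\le\sigma_1^2(\bJ_t)\|\bB v\|$, and from $\|\bB v\|^2=\sum_i c_i^2\mu_i^2\le\mu_r^2+(\mu_1+\mu_r)\delta\le\sigma_r^4(\bU_t)+4\lambda_1^*\delta$ we get $\|\bB v\|\le\sigma_r^2(\bU_t)+2\sqrt{\lambda_1^*\delta}$; hence, using $\sigma_1^2(\bJ_t)\le 2\lambda_1^*$, $-2\eta\langle\bB v,\bC v\rangle\ge-2\eta\sigma_1^2(\bJ_t)\sigma_r^2(\bU_t)-8\eta\lambda_1^{*3/2}\sqrt\delta$. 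Adding the two,
\$
\sigma_r^2(\bU_{t+1})\ \ge\ \bigl(1+2\eta(\lambda_r^*-\sigma_1^2(\bJ_t)-\sigma_r^2(\bU_t))\bigr)\sigma_r^2(\bU_t)+\tfrac34\delta-8\eta\lambda_1^{*3/2}\sqrt\delta ,
\$
and minimizing $\tfrac34\delta-8\eta\lambda_1^{*3/2}\sqrt\delta$ over $\delta\ge0$ (Young's inequality) leaves a residual of order $\eta^2\lambda_1^{*3}$, which is the desired form.

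The step I expect to be the main obstacle is getting this residual small enough to match the stated $-4\eta^2\lambda_1^{*3}$. The naive bound $\|\bU_t\bJ_t^\top\bJ_t\|\le\sqrt{2\lambda_1^*}\,\sigma_1^2(\bJ_t)$ would only yield an $\cO(\eta\lambda_1^{*3/2})$ correction to $\sigma_r(\bU_{t+1})$, far larger than $2\eta\sigma_1^2(\bJ_t)\sigma_r^2(\bU_t)$ when $\sigma_r(\bU_t)$ is small; the mechanism above instead exploits that the minimal singular direction of $\bU_{t+1}$ differs from that of $\bU_t$ only by an $\cO(\sqrt\delta)$ amount, so the noise coupling effectively acts through $\sigma_r^2(\bU_t)$ and the misalignment loss is dominated by the $\tfrac34\delta$ gain from the monotone quadratic $\psi$. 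Tightening the constant then amounts to careful bookkeeping of this cancellation, for instance using that $\sigma_1^2(\bU_t)$ and $\sigma_1^2(\bJ_t)$ cannot both be close to $2\lambda_1^*$ without forcing $\sigma_r(\bU_t)$ — hence the entire right-hand side — to be essentially nonpositive.
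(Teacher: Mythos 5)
Your Rayleigh--quotient argument is internally sound as far as it goes: the expansion of $\bU_{t+1}^\top\bU_{t+1}$, the pointwise bound $\psi(\mu_i)-\psi(\mu_r)\ge\tfrac34(\mu_i-\mu_r)$ under $\eta\le1/(32\lambda_1^*)$, and the estimate $\|\bB v\|\le\sigma_r^2(\bU_t)+2\sqrt{\lambda_1^*\delta}$ all check out. But it does not prove the lemma as stated. Optimizing $\tfrac34\delta-8\eta\lambda_1^{*3/2}\sqrt{\delta}$ over $\delta\ge0$ leaves a residual of $-\tfrac{64}{3}\eta^2\lambda_1^{*3}\approx-21.3\,\eta^2\lambda_1^{*3}$, not $-4\eta^2\lambda_1^{*3}$. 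You flag this yourself, but the constant is not cosmetic here: the value $4$ is consumed exactly downstream (in Lemma \ref{lma:inv3} and Lemma \ref{lma:2.3}, the condition $\eta\le\Delta^2/(32\lambda_1^{*3})$ is what absorbs $4\eta^2\lambda_1^{*3}$ against the gain $\eta\Delta/2\cdot\Delta/4$), so your weaker version would force retuning the step-size thresholds throughout Section \ref{sec:2}. Moreover, the tightening route you sketch is doubtful: $\sigma_1^2(\bU_t)$ and $\sigma_1^2(\bJ_t)$ can in fact both be close to $2\lambda_1^*$ simultaneously, since their top singular directions need not coincide; in that extreme $\sigma_r(\bU_t)$ is indeed small when $r\ge2$, but converting this observation into a proof that recovers the constant $4$ is not carried out and is not obviously routine.

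For comparison, the paper avoids the misalignment penalty entirely, which is why it gets the clean constant. It writes $\bU_{t+1}\bU_{t+1}^\top=\bB+\bC-\eta^2\bR_1+\eta^2\bR$ with $\bB=\bU_t(\tfrac12\bI_r-2\eta\bX_t^\top\bX_t)\bU_t^\top$, $\bC=(\tfrac1{\sqrt2}\bI_r+\sqrt2\eta\bLambda_r)\bU_t\bU_t^\top(\tfrac1{\sqrt2}\bI_r+\sqrt2\eta\bLambda_r)$, $\bR_1=2\bLambda_r\bU_t\bU_t^\top\bLambda_r$, and $\bR$ the Gram matrix of the gradient term; under the stated step size all four blocks are positive semi-definite, so superadditivity of the smallest eigenvalue yields $\sigma_r^2(\bU_{t+1})\ge\sigma_r(\bB)+\sigma_r(\bC)-\eta^2\sigma_1(\bR_1)$. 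Crucially, the noise coupling is handled inside a congruence, $\bU_t(\sigma_1^2(\bJ_t)\bI_r-\bJ_t^\top\bJ_t)\bU_t^\top\succeq\zero$, which peels off exactly $-2\eta\sigma_1^2(\bJ_t)\sigma_r^2(\bU_t)$ with no $\sqrt{\delta}$-type loss, and the only $\eta^2$ price paid is $\sigma_1(\bR_1)\le4\lambda_1^{*3}$. To salvage your write-up you would either have to state and propagate the weaker constant, or replace the Cauchy--Schwarz treatment of $v^\top\bB\bC v$ by a congruence-style bound --- which essentially reproduces the paper's decomposition.
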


\begin{proof}
	    Substituting the update rule \eqref{equ:U} of $\bU_{t+1}$ into $\bU_{t+1}\bU_{t+1}^\top$, we get 
	    \$
	    \bU_{t+1}\bU_{t+1}^\top&=(\bU_t-\eta\bU_t\bX_t^\top\bX_t+\eta\bLambda_r\bU_t)\cdot(\bU_t-\eta\bU_t\bX_t^\top\bX_t+\eta\bLambda_r\bU_t)^\top\\
	    &=\bB+\bC-\eta^2\bR_1+\eta^2\bR
	    \$
	    where
	    \$
	    \bB&=\bU_t(\frac{1}{2}\bI_r-2\eta\bX_t^\top\bX_t)\bU_t^\top,\\
	    \bC&=(\frac{1}{\sqrt{2}}\bI_r+\sqrt{2}\eta\bLambda_r)\bU_t\bU_t^\top(\frac{1}{\sqrt{2}}\bI_r+\sqrt{2}\eta\bLambda_r),\\
	    \bR_1&=2\bLambda_r\bU_t\bU_t^\top\bLambda_r,\\
	    \bR&=(\bLambda_r\bU_t-\bU_t\bX_t^\top\bX_t)(\bLambda_r\bU_t-\bU_t\bX_t^\top\bX_t)^\top.
	    \$
	    Here $\bB$ is positive semi-definite (PSD) since $2\eta\sigma_1^2(\bX_t)\leq4\eta\lambda_1\leq1/2$ and $\bC,\bR_1,\bR$ are all PSD. By the eigenvalue inequality and the equivalence between eigenvalues and singular values of a PSD matrix, we have
	    \#
	    \sigma_r^2(\bU_{t+1})&\geq\sigma_r(\bB)+\sigma_r(\bC)-\eta^2\sigma_1(\bR_1)+\eta^2\sigma_r(\bR)\notag\\
	    &\geq\sigma_r(\bB)+\sigma_r(\bC)-\eta^2\sigma_1(\bR_1)\label{equ:3.9}.
	    \#
	    For the first term $\bB$, we decompose it into two terms:
	    \$
	    \bB=\underbrace{\bU_t((\frac{1}{2}-2\eta\sigma_1^2(\bJ_t))\cdot\bI_r-2\eta\bU_t^\top\bU_t)\bU_t^\top}_{\bB_1}+2\eta\cdot\underbrace{\bU_t(\sigma_1^2(\bJ_t)\cdot\bI_r-\bJ_t^\top\bJ_t)\bU_t^\top}_{\bB_2}.
	    \$
	    The inequality $2\eta(\sigma_1^2(\bJ_t)+\sigma_1^2(\bU_t))\leq8\eta\lambda_1\leq1/2$ implies that $\bB_1$ is PSD. Since $\bB_2$ is also PSD, we have $\sigma_r(\bB)\geq\sigma_r(\bB_1)$.
	    To determine $\sigma_r(\bB_1)$, we write the singular values of $\bB_1$ as
	    \$
	    (\frac{1}{2}-2\eta\sigma_1^2(\bJ_t))\cdot\sigma_i^2(\bU_t)-2\eta\sigma_i^4(\bU_t),\ i=1,\ldots,r.
	    \$
	    Since $1/2-2\eta\sigma_1^2(\bJ_t)\geq1/4$, the function $g_6(s)=(1/2-2\eta\sigma_1^2(\bJ_t))s-2\eta s^2$ is increasing on $(-\infty,\frac{1}{16\eta}]$. Then the inequality $\sigma_i^2(\bU_t)\leq\sigma_1^2(\bU_t)\leq 2\lambda_1\leq\frac{1}{16\eta}$ implies that
	    \$
	    \sigma_r(\bB_1)=(\frac{1}{2}-2\eta(\sigma_1^2(\bJ_t)+\sigma_r^2(\bU_t)))\cdot\sigma_r^2(\bU_t).
	    \$
	    For the second term $\bC$, we have
	    \$
	    \sigma_r(\bC)\geq\sigma_r^2(\frac{1}{\sqrt{2}}\bI_r+\sqrt{2}\eta\bLambda_r)\sigma_r^2(\bU_t)\geq(\frac{1}{2}+2\eta\lambda_r)\sigma_r^2(\bU_t).
	    \$
	    For the third term $\bR_1$, since $\sigma_1^2(\bX_t)\leq 2\lambda_1$, we have
	    \$
	    \sigma_1(\bR_1)\leq4\lambda_1^{3}.
	    \$
	    Finally, substituting the analysis of $\bB,\bC,\bR_1$ into \eqref{equ:3.9} gives the desired result.
	\end{proof}
	
\subsection{$\sigma_1(\mathbf{J}_t)$ converges to zero linearly via an SNR argument}\label{sec:J}

Lemma \ref{lma:2.3} shows that if $\bX_0\in\cR$, then $\sigma_1(\bJ_t)$ will diminish to zero at a geometric rate. A key step of the analysis is to examine the SNR $\frac{\sigma_r^2(\bU_{t})}{\sigma_{1}^2(\bJ_t)}$. Our analysis extends the rank-one case in Section \ref{sec:2} to a general rank scenario.

\begin{lemma}\label{lma:2.3}
    Suppose $\eta\leq {\Delta^2}/({32\lambda_1^{3}})$ and $\bX_0\in\cR$. Then, for all $t\geq 0$, we have
    \$
	\frac{\sigma_1^2(\bJ_{t+1})}{\sigma_r^2(\bU_{t+1})}\leq(1-\eta\Delta/3) \cdot \frac{\sigma_1^2(\bJ_t)}{\sigma_r^2(\bU_t)}.
    \$
    Hence, $\sigma_1^2(\bJ_{t})\leq  {8\lambda_1^{2}}  (1-\eta\Delta/3)^t /{\Delta}$ for all $t$ and $\sigma_1^2(\bJ_t)<\epsilon$ after
    \$
    T_{\bJ}^\epsilon=\cO\left(\frac{3}{\eta\Delta}\log\frac{8\lambda_1^{2}}{\epsilon\Delta}\right) ~~\text{iterations.}
    \$
\end{lemma}

\begin{proof}
    By Lemma \ref{lma:2.2}, we have $\bX_t\in\cR$ for all $t\geq0$.
    Then by Lemma \ref{lma:3.5}, 
        \$
        \sigma_1^2(\bJ_{t+1})&\leq(1+2\eta(\lambda_{r+1}-\sigma_1^2(\bJ_t)-\sigma_r^2(\bU_t))+16\eta^2\lambda_1^{2} )\cdot\sigma_1^2(\bJ_t)\\
        &\leq(1-\eta\Delta/2+2\eta(\lambda_r-\Delta/2-\sigma_1^2(\bJ_t)-\sigma_r^2(\bU_t)))\cdot\sigma_1^2(\bJ_t),
        \$
        where the second inequality follows from $\eta\leq\frac{\Delta}{32\lambda_1^{2}}$.
        By Lemma \ref{lma:3.8a}, 
        \$
        \sigma_r^2(\bU_{t+1})&\geq(1+\eta\Delta+2\eta(\lambda_{r}-\Delta/2-\sigma_1^2(\bJ_t)-\sigma_r^2(\bU_t)))\cdot\sigma_r^2(\bU_t)-4\eta^2\lambda_1^{3}\\
        &\geq(1+\eta\Delta/2+2\eta(\lambda_{r}-\Delta/2-\sigma_1^2(\bJ_t)-\sigma_r^2(\bU_t)))\cdot\sigma_r^2(\bU_t),
        \$
        where we use $\sigma_r^2(\bU_t)\geq\Delta/4$ and $\eta\leq\frac{\Delta^2}{32\lambda_1^{3}}$ in the second inequality. A combination of the above two inequalities gives that
        \$
        \frac{\sigma_1^2(\bJ_{t+1})}{\sigma_r^2(\bU_{t+1})}\leq\frac{1-\eta\Delta/2+2\eta(\lambda_{r}-\Delta/2-\sigma_1^2(\bJ_t)-\sigma_r^2(\bU_t))}{1+\eta\Delta/2+2\eta(\lambda_{r}-\Delta/2-\sigma_1^2(\bJ_t)-\sigma_r^2(\bU_t))}\cdot\frac{\sigma_1^2(\bJ_{t})}{\sigma_r^2(\bU_{t})}.
        \$
        Since the function $g_7(s)=\frac{1-\eta\Delta/2+s}{1+\eta\Delta/2+s}$ is increasing on $[-1/2,1/2]$, the condition $-1/2\leq2\eta(\lambda_{r}-\Delta/2-\sigma_1^2(\bJ_t)-\sigma_r^2(\bU_t))\leq1/2$ implies that
        \$
        \frac{\sigma_1^2(\bJ_{t+1})}{\sigma_r^2(\bU_{t+1})}\leq\frac{3/2-\eta\Delta/2}{3/2+\eta\Delta/2}\cdot\frac{\sigma_1^2(\bJ_{t})}{\sigma_r^2(\bU_{t})}\leq(1-\eta\Delta/3)\cdot\frac{\sigma_1^2(\bJ_{t})}{\sigma_r^2(\bU_{t})}.
        \$
        By deduction, we have
        \$
        \sigma_1^2(\bJ_{t})&\leq(1-\eta\Delta/3)^t\cdot\sigma_r^2(\bU_{t})\frac{\sigma_1^2(\bJ_0)}{\sigma_r^2(\bU_0)}\leq(1-\eta\Delta/3)^t\cdot\frac{8\lambda_1^{2}}{\Delta},
        \$
        where the second inequality follows from $\sigma_r^2(\bU_t)\leq2\lambda_1$, $\sigma_1^2(\bJ_0)\leq\lambda_1$, and $\sigma_r^2(\bU_0)\geq\Delta/4$. Therefore, for any $\epsilon>0$, it takes at most $T_{\bJ}^{\epsilon}=\cO(\frac{3}{\eta\Delta}\log\frac{8\lambda_1^{2}}{\epsilon\Delta})$ iterations to have $\sigma_1^2(\bJ_t)\leq \epsilon$.
\end{proof}

\subsection{Final convergence}

For the convergence of $\bX_t\bX_t^\top$ to $\bSigma_r$, It remains to show that $\bU_t\bU_t^\top$ converges to $\bLambda_r$ fast, where $\bLambda_r=\diag(\lambda_1,\ldots,\lambda_r)$. Equivalently, it suffices to show that $\sigma_1(\bP_t)$ converges to zero linearly, where $\bP_t=\bLambda_t-\bU_t\bU_t^\top$. This is established in Lemma \ref{lma:2.4}. 

\begin{lemma}\label{lma:2.4}
    Suppose $\eta\leq {\Delta^2}/({36\lambda_1^{3}})$ and $\bX_0\in\cR$. Then, for all $t\geq0$, we have
	\$
	    \sigma_1(\bP_{t+1})\leq  \frac{100\lambda_1^{2}}{\eta\Delta^2}    (1-\eta\Delta/4)^{t+1}.
	\$
Hence, for any $\epsilon>0$, it takes $T_{\bP}^{\epsilon}=\cO\left(\frac{4}{\eta\Delta}\log\frac{100\lambda_1^{2}}{\eta\Delta^2\epsilon}\right)$ iterations to reach $\sigma_1(\bP_{t})\leq \epsilon$.
\end{lemma}

\begin{proof}
 By Lemma \ref{lma:2.2}, $\bX_t\in\cR$ for all $t\geq 0$.
	    Using the notation of $\bP_t$, \eqref{equ:U} can be rewritten as
	    \$
	    \bU_{t+1}=\bU_t+\eta\bP_t\bU_t-\eta\bU_t\bJ_t^\top\bJ_t.
	    \$
	    By direct calculation, we have
	    \$
	    \bP_{t+1}=(\bI_r-\eta\bU_t\bU_t^\top)\bP_t(\bI_r-\eta\bU_t\bU_t^\top)-\eta^2(\bP_t\bU_t\bU_t^\top\bP_t+\bU_t\bU_t^\top\bP_t\bU_t\bU_t^\top)+\bR_t,
	    \$
	    where
	    \$
	    \bR_t=\eta(\bI_r+\eta\bP_t)\bU_t\bJ_t^\top\bJ_t\bU_t^\top+\eta\bU_t\bJ_t^\top\bJ_t\bU_t^\top(\bI_r+\eta\bP_t)-\eta^2\bU_t(\bJ_t^\top\bJ_t)^2\bU_t^\top.
	    \$
	    By the singular value inequality, 
	    \$
	    \sigma_1(\bP_{t+1}) &\leq((1-\eta\Delta/4)^2+8\eta^2\lambda_1^{2})\cdot \sigma_1(\bP_t)+\sigma_1(\bR_t)\\
	    &\leq(1-\eta\Delta/4)\cdot \sigma_1(\bP_t)+\sigma_1(\bR_t),
	    \$
	    where we use $\Delta/4\leq\sigma_r^2(\bU_t)\leq\sigma_1^2(\bU_t)\leq2\lambda_1$ in the first inequality and $\eta\leq\frac{\Delta}{36\lambda_1^{2}}$ in the second inequality. For the remainder term $\bR_t$, by the singular value inequality and the condition $\eta\leq\frac{\Delta^2}{36\lambda_1^{3}}$, we have
	    \$
	    \sigma_1(\bR_t)\leq\sigma_1^2(\bJ_t)\leq(1-\eta\Delta/3)^t\cdot\frac{8\lambda_1^{2}}{\Delta},
	    \$
	    where the last inequality follows from Lemma \ref{lma:2.3}.
	 Then by deduction, we have
  \$
  \frac{\sigma_1(\bP_{t+1})}{(1-\eta\Delta/4)^{t+1}}&\leq\frac{\sigma_1(\bP_t)}{(1-\eta\Delta/4)^t}+\left(\frac{1-\eta\Delta/3}{1-\eta\Delta/4}\right)^{t}\frac{8\lambda_1^{2}}{(1-\eta\Delta/4)\Delta}\\
  &\leq \sigma_1(\bP_0) + \sum_{i=1}^t\left(\frac{1-\eta\Delta/3}{1-\eta\Delta/4}\right)^{i}\frac{8\lambda_1^{2}}{(1-\eta\Delta/4)\Delta}\\
  &\leq \sigma_1(\bP_0)+\frac{96\lambda_1^{2}}{\eta\Delta^2}\leq \frac{100\lambda_1^{2}}{\eta\Delta^2},
  \$
  where the last inequality follows from $\sigma_1(\bP_0)\leq2\lambda_1$. Therefore, it takes $T_{\bP}^{\epsilon}=\cO(\frac{4}{\eta\Delta}\log\frac{100\lambda_1^{2}}{\eta\Delta^2\epsilon})$ iterations to achieve $\sigma_1(\bP_t)\leq \epsilon$.
\end{proof}

\subsection{Proof of Theorem \ref{thm:1}}

By combining Lemma \ref{lma:2.3} and Lemma \ref{lma:2.4}, we can prove Theorem \ref{thm:1}. 

\begin{proof}
    Observe that
\$
\norm{\bSigma_r-\bX_t\bX_t^\top}_{\rF}\leq\norm{\bP_t}_{\rF}+2\norm{\bJ_t\bX_t^\top}_{\rF}\leq r\sigma_1(\bP_t)+2r\sqrt{2\lambda_1}\sigma_1(\bJ_t),\quad \forall \bX_t\in\cR,
\$
where we use the fact that $\norm{\bA}_{\mathrm{F}}\leq r\sigma_1(\bA)$ for any rank-$r$ matrix $\bA$. Let
\$
T^\epsilon=\max\left\{T_{\bJ}^{\epsilon^2/(32r^2\lambda_1^{})},T_{\bP}^{\epsilon/(2r)}\right\}.
\$ 
Then, $\norm{\bSigma_r-\bX_t\bX_t^\top}_{\rF}\leq\epsilon$ for all $t\geq T^\epsilon$. Theorem \ref{thm:1} follows from $T^\epsilon=\cO(\frac{6}{\eta\Delta}\log\frac{200r\lambda_1^{2}}{\eta\Delta^2\epsilon})$.
\end{proof}

\section{Proof of Proposition \ref{prop:asp}}

\begin{proof}
    Consider $\bX$ with $\sigma_1(\bX)\leq\frac{1}{\sqrt{3\eta}}$. Let $\bX_t$ be the GD sequence initialized by $\bX$. By Corollary 2 of \cite{lee2019first}, we know GD sequence almost surely avoids the strict saddle points. By \cite{zhu2021global}, we know all the saddle points are strict and all the local minima are global minima. Therefore, we conclude that the GD sequence converges to the global minima almost surely.

    Now it remains to show that Assumption \ref{asp:1} must hold if the GD sequence converges to the global minima. Indeed, if we suppose Assumption \ref{asp:1} does not hold, then the GD sequence will converge with $\lim_{t\to\infty}\sigma_1(\bu_{k,t})=0$ for some $k\leq r$. This means the GD sequence converges to a saddle point, since any stationary point with some $\bu_{k,t}=\zero$ ($k\leq r$) is a saddle point, rather than a global minimum. This leads to the contradiction.
\end{proof}

\section{Analysis of large initialization}

In this section, we will prove Theorem \ref{lma:r'} as well as the results in Section \ref{sec:proof}. Before delving further, we first write down the update rules of $\bu_{k,t}$ and $\bK_{k,t}$. Recall that $\bu_{k,t}$ and $\bK_{k,t}$ are the $k$-th and $(k+1)$-to-$d$-th rows of $\bX_t$. The update rules are given by
\#
\bu_{k,t+1}&=\bu_{k,t}+\eta \lambda_k\bu_{k,t}-\eta \bu_{k,t}\bX_t^\top\bX_t,\label{equ:u} \\
\bK_{k,t+1}&=\bK_{k,t}+\eta\bGamma_k\bK_{k,t}-\eta\bK_{k,t}\bX_t^\top\bX_t,\label{equ:K}
\#
where $\bGamma_{k}=\diag(\lambda_{k+1},\ldots,\lambda_d)$. We also remind readers that $\bu_{k,t}\in\RR^{1\times r}$ is a row vector. Moreover, we let $\bPi_{\bu_k,t}$ denote the projection matrix associated with $\bu_{k,t}$, that is,
\$
\bPi_{\bu_{k,t}}=\bu_{k,t}^\top(\bu_{k,t}\bu_{k,t}^\top)^{-1}\bu_{k,t}\in\RR^{r\times r}.
\$
Also, we let $\bG_{k,t}$ denote the first $k$ rows of $\bX_t$.

\subsection{Proofs for Section \ref{sec:proof}} 

In this section, we collect proofs related to the rank-two matrix approximation. 

\subsubsection{Proof of Lemma \ref{lma:1}}

\begin{proof}
Note that $t_{{\rm init},1}\leq T_1+T_{\bK}$, where 
\$
T_1=\min\{t\geq 0\mid \sigma_1^2(\bX_t)\leq 2\lambda_1\}
\$ 
is the first time when $\sigma_1^2(\bX_t)$ is smaller than $2\lambda_1$, and
\$
     T_{\bK}&=\min\{t\geq 0\mid \sigma_1^2(\bK_{k,t+T_1})\leq \lambda_k-\frac{3\Delta}{4},\forall k\leq r\}.
     \$
     To prove the lemma, it suffices to analyze $T_1$ and $T_{\bK}$ separately. 

First, we analyze $T_1$ as follows.
\begin{itemize}
    \item If $\sigma_1^2(\bX_0)\leq 2\lambda_1$, then $T_1=0$.
    \item If $2\lambda_1<\sigma_1^2(\bX_0)<1/(3\eta)$, then by Lemma \ref{lma:inv1}, $\sigma_1^2(\bX_t)\leq 1/(3\eta)$ for all $t$. Furthermore, it follows from Lemma \ref{lma:3.2} that
    \$
	    \sigma_1(\bX_{t+1})&\leq (1+\eta\lambda_1-\eta\sigma^2_1(\bX_t))\cdot\sigma_1(\bX_t)\\
	    &\leq(1-\eta\lambda_1)\cdot\sigma_1(\bX_t),\quad \forall t<T_1,
	    \$
     where the second inequality uses $\sigma_1^2(\bX_t)>2\lambda_1$ for all $t<T_1$. It implies that 
     \$
     \sigma_1(\bX_{t})\leq(1-\eta\lambda_1)^{t}\cdot\sigma_1(\bX_0)
     \$ 
     for all $t\leq T_1$ and 
     \$
     T_{1}=\cO\left(\frac{1}{\eta\lambda_1}\log\frac{\sigma_1(\bX_0)}{\sqrt{2\lambda_1}}\right).
     \$
\end{itemize}

     By Lemma \ref{lma:inv1}, we have $\sigma_1^2(\bX_t)\leq2\lambda_1$ for all $t\geq T_1$.
     
     Next, we analyze $T_{\bK}$ and the following quantities
     \$
     T_{\bK_{k}}&=\min\{t\geq 0\mid\sigma_1^2(\bK_{k,t+T_1})\leq\lambda_k-\frac{3\Delta}{4}\}.
     \$
     Recall that $\bK_{k,t}$ is the $(k+1)$-to-$d$-th rows of $\bX_t$. 
     Then by \eqref{equ:K}, we have
    \#
	    \bK_{k,t+1}&=\bK_{k,t}+\eta\bGamma_k\bK_{k,t}-\eta\bK_{k,t}\bX_t^\top\bX_t\notag\\
     &=\underbrace{\frac{1}{2}\bK_{k,t}-\eta\bK_{k,t}\bK_{k,t}^\top\bK_{k,t}}_{\bB}+\underbrace{(\frac{1}{4}\bI_{d-k}+\eta\bGamma_k)\bK_{k,t}}_{\bC}+\underbrace{\bK_{k,t}(\frac{1}{4}\bI_{k}-\eta\bG_{k,t}^\top\bG_{k,t})}_{\bD},\notag
	    \#
     where $\bGamma_k=\diag(\lambda_{k+1},\ldots,\lambda_d)$ and $\bG_{k,t}\in\RR^{k\times r}$ is the first $k$ rows of $\bX_t$. By the singular value inequality, we obtain
    \$
	    \sigma_1(\bK_{k,t+1})\leq\sigma_1(\bB)+\sigma_1(\bC)+\sigma_1(\bD).
	    \$
    For the first term $\bB$, similar to Lemma \ref{lma:3.5}, we can show that
	    \$
	    \sigma_1(\bB)=\sigma_1(\bK_{k,t})/2-\eta\sigma_1^3(\bK_{k,t}),\quad \forall t\geq T_1.
	    \$
For the second term $\bC$, by the singular value inequality,
	    \$
	    \sigma_1(\bC)\leq(\frac{1}{4}+\eta\lambda_{k+1})\cdot\sigma_1(\bK_{k,t}).
	    \$
     For the third term $\bD$, since $\bG_{k,t}^\top\bG_{k,t}$ is PSD and $\eta\sigma_1^2(\bG_{k,t})\leq \frac{1}{4}$ for all $t\geq T_1$, we have
	    \$
	    \sigma_1(\bD)\leq\sigma_1(\bK_{k,t})/4,\quad \forall t\geq T_1.
	    \$
     Combining,
     \#\label{equ:a4.1}
	    \sigma_1(\bK_{k,t+1})\leq(1+\eta\lambda_{k+1}-\eta\sigma_1^2(\bK_{k,t}))\cdot\sigma_1(\bK_{k,t}),\quad \forall t\geq T_1,\quad \forall k\leq r.
	\#
      Since $\lambda_{k+1}\leq \lambda_k -\Delta$ for $ k\leq r$, \eqref{equ:a4.1} implies that 
     \$
	    \sigma_1(\bK_{k,t+T_1+1})\leq(1-\eta\Delta/4)\cdot\sigma_1(\bK_{k,t+T_1}),\quad\forall t <T_{\bK_k},\quad \forall k\leq r.
	    \$
     Hence, $\sigma_1(\bK_{k,t+T_1})\leq(1-\eta\Delta/4)^{t}\cdot\sigma_1(\bK_{k,T_1})$ for all $t\leq T_{\bK_k}$. In particular, 
     \$
     T_{\bK_k}=\cO\left(\frac{2}{\eta\Delta}\log\frac{\sigma_1^2(\bK_{k,T_1})}{\lambda_k-\frac{3\Delta}{4}}\right)\textnormal{ and } T_{\bK}=\cO\left(\frac{2}{\eta\Delta}\log\frac{8\lambda_1}{\Delta}\right),
     \$
     where we use $\sigma_1^2(\bK_{k,T_1})\leq2\lambda_1$ and $\lambda_k-\frac{3\Delta}{4}\geq \frac{\Delta}{4}$. 
     
     Finally, similar to Lemma \ref{lma:inv1} and \ref{lma:inv2}, for any $a\geq \lambda_{k+1}$, if $\sigma_1^2(\bK_{k,t+T_1})\leq a$, then $\sigma_1^2(\bK_{k,t'+T_1})\leq a$ for all $t'\geq t$. This implies that $\sigma_1^2(\bK_{k,t+T_1})\leq\lambda_k-\frac{3\Delta}{4}$ for all $t\geq T_{\bK}$ for $k\leq r$.  
\end{proof}

\subsubsection{Proof of Lemma \ref{lma:2}}

\begin{proof}
    This lemma is a special case of Lemma \ref{lma:d1}, where we take $k=1$ and $t_{\rm init}=t_{{\rm init},1}$. Notice that $\bG_{0,t}=\zero$ and $\bX_t\in\cS$ for all $t\geq t_{{\rm init},1}$ by Lemma \ref{lma:1}. Thus, the conditions in Lemma \ref{lma:d1} trivially hold. Then Lemma~\ref{lma:2} immediately follows from Lemma \ref{lma:d1}.
\end{proof}

\subsubsection{Proof of Lemma \ref{lma:3}}

\begin{proof}
    The lemma is a special case of Lemma \ref{lma:d2} and Lemma \ref{lma:25}. In Lemma \ref{lma:d2}, we take $k=1$ and $t_{\rm init}=t_{{\rm init},1}+T_{\bu_1}$. In Lemma \ref{lma:25}, we take $k=1$ and $t_{\rm init}=t_{{\rm init},1}+T_{\bu_1}+t^*$.
\end{proof}

\subsubsection{Proof of Lemma \ref{lma:10}}

\begin{proof}
    This lemma is a special case of Lemma \ref{lma:d1}, where we take $k=2$ and $t_{\rm init}=t_1+t_1^*$. 
\end{proof}



 \subsection{Proof of Theorem \ref{lma:r'}}

 \begin{proof}
    To prove this theorem, we will use an inductive argument. Our induction hypotheses are listed below:
     
        \begin{itemize}

	        \item H($k,1$). $\sigma_1(\bu_{k,t}\bG_{k-1,t}^\top)\leq \sqrt{\frac{\Delta}{8}}\min\{\sigma_1(\bu_{k,t_{{\rm init},k}}),\sqrt{\frac{\Delta}{2}}\}\cdot (1-\eta\Delta/6)^{t-t_{{\rm init},k}}$ for all $t\geq t_{{\rm init},k}$.
	        
	        \item H($k,2$).  $T_{\bu_k}=\cO\left(\frac{4}{\eta\Delta}\log\frac{\Delta}{2\sigma_1^2(\bu_{k,t_{{\rm init},k}})}\right)$ and $\sigma_1^2(\bu_{k,t})\geq \frac{\Delta}{2}$ for all $t\geq t_{{\rm init},k}+T_{\bu_k}$.
         
	       \item H($k,3$). $\sigma_1(\bu_{k,t}\bK_{k,t}^\top)\leq(1-\eta\Delta/6)^{t-t_{k}}$ for all $t\geq t_{k}$.
	   \end{itemize}
    Note that H(1,1) trivially holds because $\bG_{0,t}=\zero$. Then we prove H($k,1$), H($k,2$), H($k,3$), H($k+1,1$) successively until H($r,3$). 

\begin{itemize}
    \item $\{\textnormal{H}(j,\cdot)\}_{j<k}$ + H($k,1$) $\to$ H($k,2$) 
    
\vspace{6pt}

    This follows from Lemma~\ref{lma:d1}, where we take $t_{\rm init}=t_{{\rm init},k}$.

    \vspace{6pt}
    
\item $\{\textnormal{H}(j,\cdot)\}_{j<k}$ + H($k,1$) + H($k,2$) $\to$ H($k,3$) 

\vspace{6pt}

This follows from Lemma \ref{lma:d2}, where we take $t_{\rm init}=t_{{\rm init},k}+T_{\bu_k}$. 

\vspace{6pt}

\item $\{\textnormal{H}(j,\cdot)\}_{j\leq k}$ $\to$ H($k+1,1$) 

\vspace{6pt}

By $\{\textnormal{H}(j,3)\}_{j\leq k}$,
\$
\sigma_1(\bu_{k+1,t}\bG_{k,t}^\top)\leq \sum_{j\leq k}\sigma_1(\bu_{j,t}\bK_{j,t}^\top)\leq r(1-\eta\Delta/6)^{t-t_{k}},
\$
for all $t\geq t_{k}$. By definition of $t_{k}^*$, we have
\$
r(1-\eta\Delta/6)^{t_{k}^*}\leq \sqrt{\frac{\Delta}{8}}\min\{\sigma_1(\bu_{k,t_{k}+t_{k}^*}),\sqrt{\frac{\Delta}{2}}\}.
\$ 
Then H($k+1,1$) follows from the definition $t_{{\rm init},k+1}=t_{k}+t_{k}^*$. 
\end{itemize}

By induction, H$(k,\cdot)$ holds for all $k\leq  r$.

For all $t\geq t_k$, \eqref{equ:pkt} follows from Lemma \ref{lma:25}, where $t_{{\rm init}}$ is taken as $t_k$.

For all $t\geq t_{{\rm init},r}+T_{\bu_r}$, we have $\sigma_1^2(\bu_{k,t})\geq \frac{\Delta}{2}$ for all $k\leq r$. Simultaneously,  
\$
\sum_{j\leq r}\sigma_1(\bu_{j,t}\bK_{j,t}^\top)\leq r(1-\eta\Delta/6)^{t-(t_{{\rm init},r}+T_{\bu_r}+t^*)}
\$
holds for all $t\geq t_{{\rm init},r}+T_{\bu_r}+t^*$.
Let $\bU_{t}$ be the first $r$ rows of $\bX_t$. Viewing $\bU_t\bU_t^\top$ as the sum of diagonal elements and off-diagonal elements, we find that 
\$
\sigma_r^2(\bU_t)\geq \Delta/2-r(1-\eta\Delta/6)^{t-(t_{{\rm init},r}+T_{\bu_r}+t^*)}
\$
for all $t\geq t_{{\rm init},r}+T_{\bu_r}+t^*$. Hence, $\sigma_r^2(\bU_t)\geq \Delta/4$ for all $t\geq t_{{\rm init},r}+T_{\bu_r}+t^* + t^{\sharp}$, where
\$
t^{\sharp}=\frac{\log(\Delta/(4r))}{\log(1-\eta\Delta/6)}.
\$
This implies that $\bX_t\in\cR$ for $t\geq t_{\cR}\coloneqq t_{{\rm init},r}+T_{\bu_r}+t^* + t^{\sharp}$.

The property (2) is merely an application of Theorem \ref{thm:1}.
\end{proof}

\subsection{Proof of Theorem \ref{thm:8}}

\begin{proof}
    This property immediately follows from Theorem \ref{lma:r'}.
\end{proof}
 
\subsection{Technical lemmas}

This section collects technical lemmas that are used in previous sections. Let us recall that $\bu_{k,t}$ and $\bK_{k,t}$ are the $k$-th and the $(k+1)$-to-$d$-th rows of $\bX_t$ respectively. The projection matrix associated with $\bu_{k,t}$ is denoted by 
\$
\bPi_{\bu_k,t}=\bu_{k,t}^\top(\bu_{k,t}\bu_{k,t}^\top)^{-1}\bu_{k,t}.
\$  
The first $k$ rows of $\bX_t$ are denoted by $\bG_{k,t}$, and $\bG_{0,t}=\zero$ by definition. 

\subsubsection{Dynamics}

This subsection contains lemmas describing the dynamics of the GD sequence. 

Lemma \ref{lma:d1} shows that when $\sigma_1(\bu_{k,t}\bG_{k-1,t}^\top)$ is sufficiently small, the signal term $\sigma_1^2(\bu_{k,t+1})$ can rise above $\Delta/2$ quickly. Moreover, as shown in Lemma \ref{lma:d1}, the term $\sigma_1^2(\bu_{k,t+1})$ will remain larger than $\Delta/2$. 
	
	\begin{lemma}\label{lma:d1}
  
    Suppose $\eta\leq\frac{1}{12\lambda_1}$, $\bX_{t}\in\gS$, and for some $t_{\rm init}\geq 0$ and $k\leq r$, the condition
    \$
    \sigma_1(\bu_{k,t}\bG_{k-1,t}^\top)\leq \sqrt{\frac{\Delta}{8}}\min\{\sigma_1(\bu_{k,t_{\rm init}}),\sqrt{\frac{\Delta}{2}}\}\cdot (1-\eta\Delta/6)^{t-t_{\rm init}}
    \$ 
    holds for all $t\geq t_{\rm init}$. Then $\sigma_1^2(\bu_{k,t})\geq \frac{\Delta}{2}$ for all $t\geq 
t_{\rm init}+ T_{\bu_{k}}$, where
\$
T_{\bu_k}=\cO\left(\frac{4}{\eta\Delta}\log\frac{\Delta}{2\sigma_1^2(\bu_{k,t_{\rm init}})}\right).
\$
In addition, for all $t\geq t_{\rm init}$, we have
    \#\label{equ:d1}
    \sigma_1^2(\bu_{k,t+1})\geq(1+2\eta\lambda_k-\eta\Delta/4-2\eta\sigma_1^2(\bu_{k,t})-2\eta\sigma_1^2(\bK_{k,t}\bPi_{\bu_k,t})))\cdot\sigma_1^2(\bu_{k,t}),
    \#
    where $\bPi_{\bu_k,t}=\bu_{k,t}^\top(\bu_{k,t}\bu_{k,t}^\top)^{-1}\bu_{k,t}$ is the projection matrix associated with $\bu_{k,t}$.
\end{lemma}
\begin{proof}
    First, we show that $\sigma_1^2(\bu_{k,t})\geq \min\{\sigma_1^2(\bu_{k,t_{\rm init}}),\frac{\Delta}{2}\}$ for all $t\geq t_{\rm init}$ by induction.
    
    This is true when $t=t_{\rm init}$. Now suppose $\sigma_1^2(\bu_{k,t})\geq\min\{\sigma_1^2(\bu_{k,t_{\rm init}}),\frac{\Delta}{2}\}$ for some $t\geq t_{\rm init}$. By assumption, $\sigma_1^2(\bu_{k,t}\bG_{k-1,t}^\top)\leq\frac{\Delta}{8}\min\{\sigma_1^2(\bu_{k,t_{\rm init}}),\frac{\Delta}{2}\}\leq \frac{\Delta}{8}\sigma_1^2(\bu_{k,t})$. Then by Lemma~\ref{lma:4.2} and $\bX_t\in\gS$, we have
    \#
    \sigma_1^2(\bu_{k,t+1})&\geq(1+2\eta\lambda_k-2\eta\sigma_1^2(\bu_{k,t})-2\eta\sigma_1^2(\bK_{k,t}\bPi_{\bu_k,t})))\cdot\sigma_1^2(\bu_{k,t})-\frac{\eta\Delta}{4} \sigma_1^2(\bu_{k,t})\label{equ:H2}\\
    &\geq(1+5\eta\Delta/4-2\eta\sigma_1^2(\bu_{k,t}))\cdot\sigma_1^2(\bu_{k,t}).\label{equ:H3}
    \#
    Then we consider two cases.
    \begin{itemize}
        \item If $\sigma_1^2(\bu_{k,t})\leq\frac{5\Delta}{8}$, then $\sigma_1^2(\bu_{k,t+1})\geq\sigma_1^2(\bu_{k,t})\geq\min\{\sigma_1^2(\bu_{k,t_{\rm init}}),\frac{\Delta}{2}\}$. 
        \item If $\sigma_1^2(\bu_{k,t})\geq\frac{5\Delta}{8}$, then 
        \$
        \sigma_1^2(\bu_{k,t+1})&\geq (1+\frac{5\eta\Delta}{4}-\frac{5\eta\Delta}{4})\cdot\frac{5\Delta}{8}=\frac{5\Delta}{8}\\
        &\geq\min\{\sigma_1^2(\bu_{k,t_{\rm init}}),\frac{\Delta}{2}\},
        \$
        where the first inequality uses the fact that  $g_{8}(s)=(1+\frac{5\eta\Delta}{4}-2\eta s)s$ is increasing on $(-\infty,1/4\eta]$.
    \end{itemize}
    In both cases, we have $\sigma_1^2(\bu_{k,t+1})\geq \min\{\sigma^2_1(\bu_{k,{\rm init}}),\frac{\Delta}{2}\}$. The claim then follows by induction. 
    
    Furthermore, the above analysis shows that inequalities \ref{equ:H2} and \ref{equ:H3} hold for all $t\geq t_{\rm init}$, which leads to the inequality \ref{equ:d1}.
    
    Let 
    \$
    T_{\bu_k}=\min\{t\geq 0\mid\sigma_1^2(\bu_{k,t+t_{\rm init}})\geq \frac{\Delta}{2}\}.
    \$
    Then for $t<T_{\bu_k}$, we have $\sigma_1^2(\bu_{k,t+t_{\rm init}})<\frac{\Delta}{2}$ and by inequality \ref{equ:H3},
    \$
    \sigma_1^2(\bu_{k,t+1+t_{\rm init}})\geq(1+\eta\Delta/4)\cdot\sigma_1^2(\bu_{k,t+t_{\rm init}}).
    \$
    Hence, for all $t\leq T_{\bu_{k}}$, we have
    \$
    \sigma_1^2(\bu_{k,t+t_{\rm init}})\geq (1+\eta\Delta/4)^t\cdot\sigma_1^2(\bu_{k,t_{\rm init}}),
    \$  
    and 
    \$
    T_{\bu_k}=\cO\left(\frac{4}{\eta\Delta}\log\frac{\Delta}{2\sigma_1^2(\bu_{k,t_{\rm init}})}\right).
    \$
    Finally, by inequality \ref{equ:H3}, we have for any $a\leq \frac{5\Delta}{8}$, if $\sigma_1^2(\bu_{k,t})\geq a$, then $\sigma_1^2(\bu_{k,t+1})\geq a$. Thus, by induction, $\sigma_1^2(\bu_{k,t})\geq\frac{\Delta}{2}$ for all $t\geq t_{\rm init}+T_{\bu_{k}}$.
\end{proof}

Lemma \ref{lma:d2} shows that when the noise terms $\sigma_1(\bu_{j,t}\bK_{j,t}^\top)$ converge linearly to zero for all $j<k$ and the $k$-th signal term $\sigma_1^2(\bu_{k,t})\geq\frac{\Delta}{2}$, the noise term $\sigma_1(\bu_{k,t}\bK_{k,t}^\top)$ will also converge linearly to zero. The key component is to analyze the SNR 
\$
\frac{\sigma_1^2(\bu_{k,t})}{\sigma_1(\bu_{k,t}\bK_{k,t}^\top)}.
\$

\begin{lemma}\label{lma:d2}

    Suppose $\eta\leq\frac{\Delta}{100\lambda_1^{2}}$, $\bX_t\in\gS$, and for some $t_{\rm init}\geq 0$ and $k\leq r$, the conditions
    \#
    \sigma_1(\bu_{j,t}\bK_{j,t}^\top)&\leq(1-\eta\Delta/6)^{t-t_{\rm init}}, \quad \forall j<k,\label{cond:uk}\\
    \sigma_1(\bu_{k,t}\bG_{k-1,t}^\top)&\leq\frac{\Delta}{4}(1-\eta\Delta/6)^{t-t_{\rm init}},\label{cond:ug}\\
    \sigma_1^2(\bu_{k,t})&\geq\frac{\Delta}{2}\label{cond:u}
    \#
    hold for all $t\geq t_{\rm init}$.
    Then we have 
    \$
    \sigma_1(\bu_{k,t}\bK_{k,t}^\top)\leq(1-\eta\Delta/6)^{t-t_{\rm init}-t^*}
    \$
    for all $t\geq t_{\rm init}+t^*$, where 
    \$
    t^*=\log\left(\frac{\Delta^2}{8\lambda_1^3+144r^2\lambda_1}\right)/\log(1-\eta\Delta/6).
    \$
\end{lemma}

	\begin{proof}
By condition \ref{cond:ug}, we can apply Lemma \ref{lma:d1} to obtain
	\$
    \sigma_1^2(\bu_{k,t+1})\geq(1+2\eta\lambda_k-\eta\Delta/4-2\eta\sigma_1^2(\bu_{k,t})-2\eta\sigma_1^2(\bK_{k,t}\bPi_{\bu_k,t})))\cdot\sigma_1^2(\bu_{k,t})
    \$
    for all $t\geq t_{\rm init}$.
    By Lemma \ref{lma:4.3}, we have
	\$
    &\ \sigma_1(\bu_{k,t+1}\bK_{k,t+1}^\top)\\
    \leq\ &(1+\eta\lambda_k+\eta\lambda_{k+1}-2\eta\sigma_1^2(\bu_{k,t})-2\eta\sigma_1^2(\bK_{k,t}\bPi_{\bu_k,t})+25\eta^2\lambda_1^{2})\cdot\sigma_1(\bu_{k,t}\bK_{k,t}^\top)\\
    &+3\eta\sigma_1(\bu_{k,t}\bG_{k-1,t}^\top)\sigma_1(\bK_{k,t}\bG_{k-1,t}^\top)
    \$
    for all $t\geq t_{\rm init}$. 
    Divide both sides of the inequality by $\sigma_1^2(\bu_{k,t+1})$. By Lemma \ref{lma:d3} and $\sigma_1^2(\bu_{k,t+1})\geq\frac{\Delta}{2}$,  we have
    \#
    \frac{\sigma_1(\bu_{k,t+1}\bK_{k,t+1}^\top)}{\sigma_1^2(\bu_{k,t+1})}&\leq(1-\eta\Delta/6)\frac{\sigma_1(\bu_{k,t}\bK_{k,t}^\top)}{\sigma_1^2(\bu_{k,t})}+\frac{6\eta}{\Delta}\sigma_1(\bu_{k,t}\bG_{k-1,t}^\top)\sigma_1(\bK_{k,t}\bG_{k-1,t}^\top)\label{equ:H4}
    \#
    for all $t\geq t_{\rm init}$.
    Observe that by condition \ref{cond:uk} and definitions of $\bu_{k,t},\bK_{k,t},$ and $\bG_{k-1,t}$, we have
    \#
    \max\{\sigma_1(\bu_{k,t}\bG_{k-1,t}^\top),\sigma_1(\bK_{k,t}\bG_{k-1,t}^\top)\}\leq\sum_{j<k}\sigma_1(\bu_{k,t}\bK_{k,t}^\top)\leq r(1-\eta\Delta/6)^{t-t_{\rm init}}\label{equ:H5}
    \#
    for all $t\geq t_{\rm init}$. 
    Combining \eqref{equ:H4} and \eqref{equ:H5},
    \$
     \frac{\sigma_1(\bu_{k,t+1}\bK_{k,t+1}^\top)}{\sigma_1^2(\bu_{k,t+1})}
    \leq(1-\eta\Delta/6)\frac{\sigma_1(\bu_{k,t}\bK_{k,t}^\top)}{\sigma_1^2(\bu_{k,t})}+\frac{6\eta r^2}{\Delta}(1-\eta\Delta/6)^{2(t-t_{\rm init})}
    \$
    for all $t\geq t_{\rm init}$.
    Therefore, for all $t\geq t_{\rm init}$,
    \$
    {\rm Q}_{t+1}\leq (1-\eta\Delta/6)\cdot {\rm Q}_{t},
    \$
    where the quantity $Q_t$ is given by
    \$ 
    {\rm Q}_t=\frac{\sigma_1(\bu_{k,t}\bK_{k,t}^\top)}{\sigma_1^2(\bu_{k,t})}+\frac{36r^2}{\Delta^2}(1-\eta\Delta/6)^{2(t-t_{\rm init})-1}.
    \$
    By induction, we have
    \$
    \frac{\sigma_1(\bu_{k,t}\bK_{k,t}^\top)}{\sigma_1^2(\bu_{k,t})}
    \leq(1-\eta\Delta/6)^{t-t_{\rm init}}\left(\frac{\sigma_1(\bu_{k,t_{\rm init}}\bK_{k,t_{\rm init}}^\top)}{\sigma_1^2(\bu_{k,t_{\rm init}})}+\frac{36r^2}{\Delta^2}(1-\eta\Delta/6)^{-1}\right).
    \$
    This implies that
    \$
    \sigma_1(\bu_{k,t}\bK_{k,t}^\top)\leq\frac{8\lambda_1^3+144r^2\lambda_1}{\Delta^2}\cdot(1-\eta\Delta/6)^{t-t_{\rm init}},
    \$
    where we use $1-\eta\Delta/6\geq 1/2$, $\sigma_1^2(\bX_t)\leq2\lambda_1$, and $\sigma_1^2(\bu_{k,t})\geq \frac{\Delta}{2}$ for all $t\geq t_{\rm init}$.
    By definition of $t^*$, we have 
    \$
    (1-\eta\Delta/6)^{t^*}\leq\frac{\Delta^2}{8\lambda_1^3+144r^2\lambda_1}.
    \$
    Thus, for all $t\geq t_{\rm init}+t^*$, we have 
    \$
    \sigma_1(\bu_{k,t}\bK_{k,t}^\top)\leq
    (1-\eta\Delta/6)^{t-t_{\rm init} -t^*},
    \$
    which concludes the proof.   
\end{proof}

Let $p_{k,t}=\lambda_k-\sigma_1^2(\bu_{k,t})$ be the error term associated with the $k$-th signal. Lemma \ref{lma:25} shows that when the noise terms $\sigma_1(\bu_{j,t}\bK_{j,t}^\top)$ converge linearly to zero for all $j\leq k$ and the $k$-th signal term $\sigma_1^2(\bu_{k,t})\geq \frac{\Delta}{2}$, this signal term will converge fast to $\lambda_k$. Specifically, the error term $|p_{k,t}|$ will converge to zero at a linear rate. The analysis is similar to Lemma \ref{lma:2.4}. 

\begin{lemma}\label{lma:25}
    Suppose $\eta\leq \frac{\Delta}{100\lambda_1^2}$, $\bX_t\in\cS$, and for some $t_{\rm init}\geq 0$ and $k\leq r$, the conditions
    \#
    \sigma_1(\bu_{j,t}\bK_{j,t}^\top)&\leq (1-\eta\Delta/6)^{t-t_{\rm init}},\quad \forall j\leq k,\label{cond:uk-2}\\
    \sigma_1^2(\bu_{k,t})&\geq\frac{\Delta}{2}
    \#
    hold for all $t\geq t_{\rm init}$. Then for all $t\geq t_{\rm init}$, we have
    \$
    |p_{k,t}|\leq (2\lambda_1+\frac{24r}{\eta\Delta})\cdot(1-\eta\Delta/8)^{t-t_{\rm init}},
    \$
    where $p_{k,t}=\lambda_k-\sigma_1^2(\bu_{k,t})$.
\end{lemma}

\begin{proof}
    Using the notation of $p_{k,t}$, \eqref{equ:u} can be rewritten as
    \$
    \bu_{k,t+1}=\bu_{k,t}+\eta p_{k,t}\bu_{k,t}-\eta\bu_{k,t}\bW_t.
    \$
    where
    \$
    \bW_t=\bG_{k-1,t}^\top\bG_{k-1,t}+\bK_{k,t}^\top\bK_{k,t}.
    \$
    By direction calculation, we have
    \$
    p_{k,t+1}=p_{k,t}\cdot((1-\eta\sigma_1^2(\bu_{k,t}))^2+\eta^2\lambda_k\sigma_1^2(\bu_{k,t})) + {\ \rm res}_{t}
    \$
    where
    \$
    {\rm res}_t=2\eta(1+\eta p_{k,t})\bu_{k,t}\bW_t\bu_{k,t}^\top-\eta^2\bu_{k,t}\bW_t^2\bu_{k,t}^\top.
    \$
    By the singular value inequality, for all $t\geq t_{\rm init}$, we have
    \#
    |p_{k,t+1}|&\leq|p_{k,t}|\cdot((1-\eta\sigma_1^2(\bu_{k,t}))^2+\eta^2\lambda_k\sigma_1^2(\bu_{k,t}))+|{\rm res}_t|\notag\\
    &\leq |p_{k,t}|\cdot((1-\eta\Delta/2)^2+2\eta^2\lambda_1^2)+|{\rm res}_t|\notag\\
    &\leq |p_{k,t}|\cdot(1-\eta\Delta/2)+|{\rm res}_t|,\label{equ:p}
    \#
    where the second inequality uses $\Delta/2\leq \sigma_1^2(\bu_{k,t})\leq 2\lambda_1$ and the third inequality use $\eta\leq \frac{\Delta}{100\lambda_1^2}$. Using $\eta\leq\frac{\Delta}{100\lambda_1^2}$ and $\sigma_1^2(\bX_t)\leq 2\lambda_1$, we have
    \$
    |{\rm res}_t|\leq \sum_{j\leq k}\sigma_1(\bu_{k,t}\bK_{k,t}^\top)\leq r(1-\eta\Delta/6)^{t-t_{\rm init}}
    \$
    for all $t\geq t_{\rm init}$. Substituting this into \eqref{equ:p}, we obtain
    \$
    |p_{k,t+1}|\leq |p_{k,t}|\cdot (1-\eta\Delta/2)+r(1-\eta\Delta/6)^{t-t_{\rm init}}\\
    \leq |p_{k,t}|\cdot (1-\eta\Delta/8)+r(1-\eta\Delta/6)^{t-t_{\rm init}}.
    \$
    This implies that for all $t\geq t_{\rm init}$, 
    \$
    Q_{t+1}\leq Q_t+\frac{r}{1-\eta\Delta/8}\left(\frac{1-\eta\Delta/6}{1-\eta\Delta/8}\right)^{t-t_{\rm init}},
    \$
    where 
    \$
    Q_t=\frac{|p_{k,t}|}{(1-\eta\Delta/8)^{t-t_{\rm init}}}.
    \$
    By induction, for all $t\geq t_{\rm init}$, we have
    \$
    Q_{t}&\leq Q_{t_{\rm init}}+\frac{r}{1-\eta\Delta/8}\sum_{i=0}^{t-1-t_{\rm init}}\left(\frac{1-\eta\Delta/6}{1-\eta\Delta/8}\right)^{i}\\
    &\leq |p_{k,t_{\rm init}}|+\frac{24r}{\eta\Delta}\\
    &\leq 2\lambda_1+\frac{24r}{\eta\Delta}.
    \$
    Hence, for all $t\geq t_{\rm init}$, we have 
    \$
    |p_{k,t}|\leq (2\lambda_1+\frac{24r}{\eta\Delta})\cdot (1-\eta\Delta/8)^{t-t_{\rm init}},
    \$
    which concludes the proof.
\end{proof}

\subsubsection{Technical calculations}

The following lemmas provide calculations related to an SNR argument, where the SNR refers to the ratio 
\$
\frac{\sigma_1^2(\bu_{k,t})}{\sigma_1(\bu_{k,t}\bK_{k,t}^\top)}.
\$
Recall that $\bu_{k,t}$ is the $k$-th row of $\bX_t$ and $\bK_{k,t}$ represents the $(k+1)$-to-$d$-th rows of $\bX_t$. Moreover, we recall that
\$
\bPi_{\bu_k,t}=\bu_{k,t}^\top(\bu_{k,t}\bu_{k,t}^\top)^{-1}\bu_{k,t}
\$ 
is the projection matrix associated with $\bu_{k,t}$.  $\bG_{k,t}$ collects the first $k$ rows of $\bX_t$.

Lemma \ref{lma:4.2} provides a lower bound on $\sigma_1^2(\bu_{k,t+1})$ in terms of the preceding iteration. 

\begin{lemma}\label{lma:4.2}
 
    For any $k$ and $t\geq 0$, we have
    \$
    \sigma_1^2(\bu_{k,t+1})\geq(1+2\eta\lambda_k-2\eta\sigma_1^2(\bu_{k,t})-2\eta\sigma_1^2(\bK_{k,t}\bPi_{\bu_k,t}))\cdot\sigma_1^2(\bu_{k,t})-2\eta\sigma_1^2(\bu_{k,t}\bG_{k-1,t}^\top).\label{equ:uu}
    \$
    
\end{lemma}

	\begin{proof}
	    Substituting \eqref{equ:u} into $\sigma_1^2(\bu_{k,t+1})$ gives that
	    \$
	    \sigma_1^2(\bu_{k,t+1}) 
     =\ & \bu_{k,t+1}\bu_{k,t+1}^\top\\
     =\ &\bu_{k,t}(\bI_r+\eta\lambda_k\bI_r-\eta\bX_t^\top\bX_t)^2\bu_{k,t}^{\top}\\
	    =\ &\bu_{k,t}(\bI_r+2\eta\lambda_k\bI_r-2\eta\bX_t^\top\bX_t)\bu_{k,t}^\top+\eta^2\bR_{k,t}\\
	    =\ &\bu_{k,t}(\bI_r+2\eta\lambda_k\bI_r-2\eta\sigma_1^2(\bu_{k,t})\bI_r-2\eta\sigma_1^2(\bK_{k,t}\bPi_{\bu_k,t})\bI_r-2\eta\bG_{k-1,t}^\top\bG_{k-1,t})\bu_{k,t}^\top\\
	    &+2\eta\bR_{k,t}'+\eta^2\bR_{k,t},
	    \$
	    where $\bR_{k,t}$ and $\bR_{k,t}'$ are non-negative real numbers given by
	    \$
	    \bR_{k,t}&=\bu_{k,t}(\lambda_k\bI_r-\bX_t^\top\bX_t)^2\bu_{k,t}^\top,\\
	    \bR_{k,t}'&=\bu_{k,t}(\sigma_1^2(\bK_{k,t}\bPi_{u_k,t})\bI_r-\bPi_{\bu_k,t}\bK_{k,t}^\top\bK_{k,t}\bPi_{\bu_k,t})\bu_{k,t}^\top.
	    \$
	    It then follows that
	    \$
	    \sigma_1^2(\bu_{k,t+1})\geq(1+2\eta\lambda_k-2\eta\sigma_1^2(\bu_{k,t})-2\eta\sigma_1^2(\bK_{k,t}\bPi_{\bu_k,t}))\cdot\sigma_1^2(\bu_{k,t})-2\eta\sigma_1^2(\bu_{k,t}\bG_{k-1,t}^\top),
	    \$
	    which concludes the proof.
	\end{proof}
	
	Lemma \ref{lma:4.3} provides an upper bound on $\sigma_1(\bu_{k,t+1}\bK_{k,t+1}^\top)$ in terms of the preceding iteration.

\begin{lemma}\label{lma:4.3}

    Suppose $\eta\leq\frac{1}{12\lambda_1}$ and $\sigma_1^2(\bX_t)\leq 2\lambda_1$. For any $k\leq r$, if $\sigma_1^2(\bu_{k,t})>0$, then we have
    \$
    &\ \sigma_1(\bu_{k,t+1}\bK_{k,t+1}^\top)\\
    \leq\ &\left(1+\eta\lambda_k+\eta\lambda_{k+1}-2\eta\sigma_1^2(\bu_{k,t})-2\eta\sigma_1^2(\bK_{k,t}\bPi_{\bu_k,t})+25\eta^2\lambda_1^{2}\right)\cdot\sigma_1(\bu_{k,t}\bK_{k,t}^\top)\notag\\
    &+3\eta\sigma_1(\bu_{k,t}\bG_{k-1,t}^\top)\sigma_1(\bK_{k,t}\bG_{k-1,t}^\top).
    \$
\end{lemma}

	\begin{proof}
	    Substituting \eqref{equ:u} and \eqref{equ:K} into $\bu_{k,t+1}\bK_{k,t+1}^\top$ gives that
	    \$
	    \bu_{k,t+1}\bK_{k,t+1}^\top&=\bu_{k,t}\bK_{k,t}^\top+\eta\lambda_k\bu_{k,t}\bK_{k,t}^\top+\eta\bu_{k,t}\bK_{k,t}^\top\bGamma_k-2\eta\bu_{k,t}\bX_t^\top\bX_t\bK_{k,t}^\top+\eta^2\bE,\\
	    &=\bB+\bC-2\eta\bD+\eta^2\bE,
	    \$
	    where
	    \$
	    \bB&=\bu_{k,t}\bK_{k,t}^\top\left(\frac{1}{2}\bI_{d-k}-2\eta\bK_{k,t}\bPi_{\bu_{k,t}}\bK_{k,t}^\top\right)\\
	    \bC&=\bu_{k,t}\bK_{k,t}^\top\left(\frac{1}{2}\bI_{d-k}+\eta\lambda_k\bI_{d-k}-2\eta\sigma_1^2(\bu_{k,t})\bI_{d-k}+\eta\bGamma_k-2\eta\bK_{k,t}(\bI_r-\bPi_{\bu_k,t})\bK_{k,t}^\top\right),\\
	    \bD&=\bu_{k,t}\bG_{k-1,t}^\top\bG_{k-1,t}\bK_{k,t}^\top,\\
	    \bE&=\lambda_k\bu_{k,t}\bK_{k,t}^\top\bGamma_k-\bu_{k,t}\bX_t^\top\bX_t\bK_t^\top\bGamma_k-\lambda_k\bu_{k,t}\bX_t^\top\bX_t\bK_{k,t}^\top+\bu_{k,t}(\bX_t^\top\bX_t)^2\bK_{k,t}^\top.
	    \$
	    By the singular value inequality,
	    \$
	    \sigma_1(\bu_{k,t+1}\bK_{k,t+1}^\top)\leq\sigma_1(\bB)+\sigma_1(\bC)+2\eta\sigma_1(\bD)+\eta^2\sigma_1(\bE).
	    \$
	    For the first term $\bB$, observe that
	    \$
	    (\bu_{k,t}\bu_{k,t}^\top)^{-1/2}\bB&=(\bu_{k,t}\bu_{k,t}^\top)^{-1/2}\bu_{k,t}\bK_{k,t}^\top\left(\frac{1}{2}\bI_{d-k}-\bK_{k,t}\bPi_{\bu_k,t}\bK_{k,t}^\top\right)\\
        &=\left(1/2-\sigma_1^2((\bu_{k,t}\bu_{k,t}^\top)^{-1/2}\bu_{k,t}\bK_{k,t}^\top\right)\cdot (\bu_{k,t}\bu_{k,t}^\top)^{-1/2}\bu_{k,t}\bK_{k,t}^\top\\
	    &=\left(1/2-\sigma_1^2(\bK_{k,t}\bPi_{\bu_k,t})\right)\cdot (\bu_{k,t}\bu_{k,t}^\top)^{-1/2}\bu_{k,t}\bK_{k,t}^\top.
	    \$
	    where we use the equality $\sigma_1(\bK_{k,t}\bPi_{\bu_k,t})=\sigma_1((\bu_{k,t}\bu_{k,t}^\top)^{-1/2}\bu_{k,t}\bK_{k,t}^\top)$.
	    Thus,
	    \$
	    \sigma_1(\bB)=(1/2-\sigma_1^2(\bK_{k,t}\bPi_{\bu_k,t}))\cdot\sigma_1(\bu_{k,t}\bK_{k,t}^\top).
	    \$
	    For the second term $\bC$, by the singular value inequality,
	    \$
	    &\sigma_1(\bC)\\
     \leq\ &\sigma_1\left(\frac{1}{2}\bI_{d-k}+\eta\lambda_k\bI_{d-k}-2\eta\sigma_1^2(\bu_{k,t})\bI_{d-r}+\eta\bGamma_k-2\eta\bK_{k,t}(\bI_r-\bPi_{\bu_k,t})\bK_{k,t}^\top\right)\cdot\sigma_1(\bu_{k,t}\bK_{k,t}^\top)\\
	    \leq\ &(1/2+\eta\lambda_k-2\eta\sigma_1^2(\bu_{k,t})+\eta\lambda_{k+1})\cdot\sigma_1(\bu_{k,t}\bK_{k,t}^\top).
	    \$
	    For the third term $\bD$,  $\sigma_1(\bD)\leq\sigma_1(\bu_{k,t}\bG_{k-1,t}^\top)\sigma_1(\bK_{k,t}\bG_{k-1,t}^\top)$.
	    For the fourth term $\bE$, since $\sigma_1^2(\bX_t)\leq2\lambda_1$, we have
	    \$
	    \sigma_1(\bE)\leq25\lambda_1^{2}\sigma_1(\bu_{k,t}\bK_{k,t})+8\lambda_1 \sigma_1(\bu_{k,t}\bG_{k-1,t}^\top)\sigma_1(\bK_{k,t}\bG_{k-1,t}^\top).
	    \$
	    Combining, we prove the lemma.
	\end{proof}
	
Lemma \ref{lma:d3} provides an upper bound on a specific ratio, which is used in the proof of Lemma \ref{lma:d2}. It serves as a new variant of the SNR argument. 

\begin{lemma}\label{lma:d3}

    Suppose $\eta\leq\frac{\Delta}{100\lambda_1^{2}}$, $\sigma_1^2(\bX_t)\leq2\lambda_1$, and $\lambda_{k+1}\leq \lambda_k-\Delta$. Let
    \$
    {\rm ratio}\coloneqq\frac{1+\eta\lambda_k+\eta\lambda_{k+1}-2\eta\sigma_1^2(\bu_{k,t})-2\eta\sigma_1^2(\bK_{k,t}\bPi_{\bu_k,t})+25\eta^2\lambda_1^{2}}{1+2\eta\lambda_k-\eta\Delta/4-2\eta\sigma_1^2(\bu_{k,t})-2\eta\sigma_1^2(\bK_{k,t}\bPi_{\bu_k,t})}.
    \$
    Then ${\rm ratio}\leq 1-\eta\Delta/6$.
\end{lemma}
	\begin{proof}
 Since $\eta\leq\frac{\Delta}{100\lambda_1^{2}}$ and $\lambda_{k+1}<\lambda_k-\Delta$, we have 
 \$
 {\rm ratio}\leq \frac{1-\eta\Delta/4+s_0}{1+\eta\Delta/4+s_0},
 \$
 where 
 \$
 s_0=2\eta\lambda_k-\eta\Delta/2-2\eta\sigma_1^2(\bu_{k,t})-2\eta\sigma_1^2(\bK_{k,t}\bPi_{k,t})\in[-1/2,1/2].
 \$ 
 Since the function $g_9(s)=\frac{1-\eta\Delta/4+s}{1+\eta\Delta/4+s}$ is increasing on $[-1/2,1/2]$, we have
	    \$
	    {\rm ratio}\leq\frac{1-\eta\Delta/4+1/2}{1+\eta\Delta/4+1/2}\leq1-\eta\Delta/6,
	    \$
     which concludes the proof.
\end{proof}


\section{Additional experiments}

In this section provide additional experiments to support and illustrate our theoretical results. 

\subsection{Rank-two matrix approximation}\label{sec:e1}

Our first extended experiment examines rank-two matrix approximation with varying dimension $d$ and initial magnitude $\varpi$. Specifically, we will choose $d$ from the set $\{1000, 2000, 4000\}$ and choose $\varpi$ from the set $\{0.001,0.5,2\}$. For each $d$, we set $\bSigma=\diag(\ba,\be)$, where $\ba\in\RR^r$ is a decreasing arithmetic sequence starting from 1 to 0.5 and $\be\in\RR^{d-r}$ is an arithmetic sequence transitioning from 0.3 to zero. Let $\bX_0=\varpi\bN_0$ with the entries of $\bN_0$ independently drawn from $\cN(0,\frac{1}{d})$. We compute the GD sequence $\bX_t$ with a step size of 0.1 and evaluate the errors $\norm{\bSigma_r-\bX_t\bX_t^\top}_{\rF}$, where $\bSigma_r=\diag(\ba,\zero)$ is the best rank-r approximation to $\bSigma$. The error curves of GD for different settings are displayed in Figure \ref{fig:2}. 

Figure \ref{fig:2} demonstrate that all the error curves exhibit the similar behaviors. The only differences lie on the first stage. 
\begin{itemize}
    \item When we use a small $\varpi=0.001$, the error does not rapidly change at the beginning. This is because $\norm{\bX_t}_{\rF}$ is close to zero and the error $\norm{\bSigma_r-\bX_t\bX_t}_{\rF}$ is approximately $\norm{\bSigma_r}_{\rF}$. This period of time corresponds to the second  property of Theorem 6. 
    \item When we use $\varpi=2$, we find the error first drops rapidly from a large value to $\norm{\bSigma_r}$. This corresponds to the Lemma \ref{lma:1} and the first property in Theorem \ref{lma:r'}. 
    \item When we use $\varpi=0.5$, the first stage nearly disappears. This means that $T_{\bu_1}$ in Theorem \ref{lma:r'} is small, especially compared with the case where $\varpi=0.001$. 
\end{itemize} 
In addition, we want to mention that if we use $\varpi=10$ to initialize the algorithm and keep other settings unchanged, then the GD sequence will diverge. This serves as a supplementary to the above experimental results. 

\begin{figure}[t]
    \centering
    \includegraphics[width = 0.7 \textwidth]{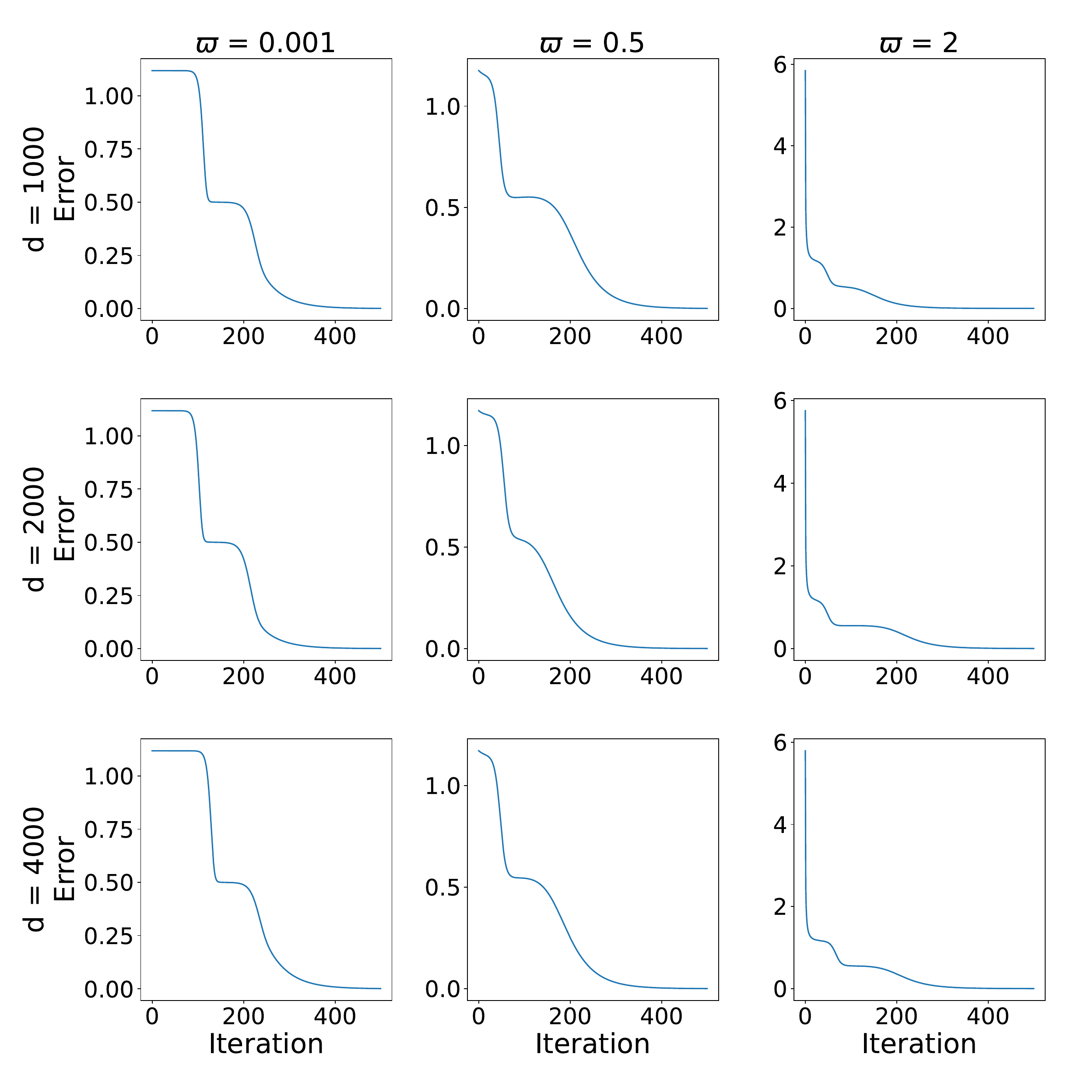}
    \caption{Error curves of GD, measured by $\norm{\bSigma_r-\bX_t\bX_t^\top}_{\rF}$, for rank-two matrix approximation. The columns represent different initial magnitudes $\varpi=0.001,0.5,2$. The rows represent different dimensions $d=1000,2000,4000$.}
    \label{fig:2}
\end{figure}

\begin{figure}[t]
    \centering
    \includegraphics[width = 0.7\textwidth]{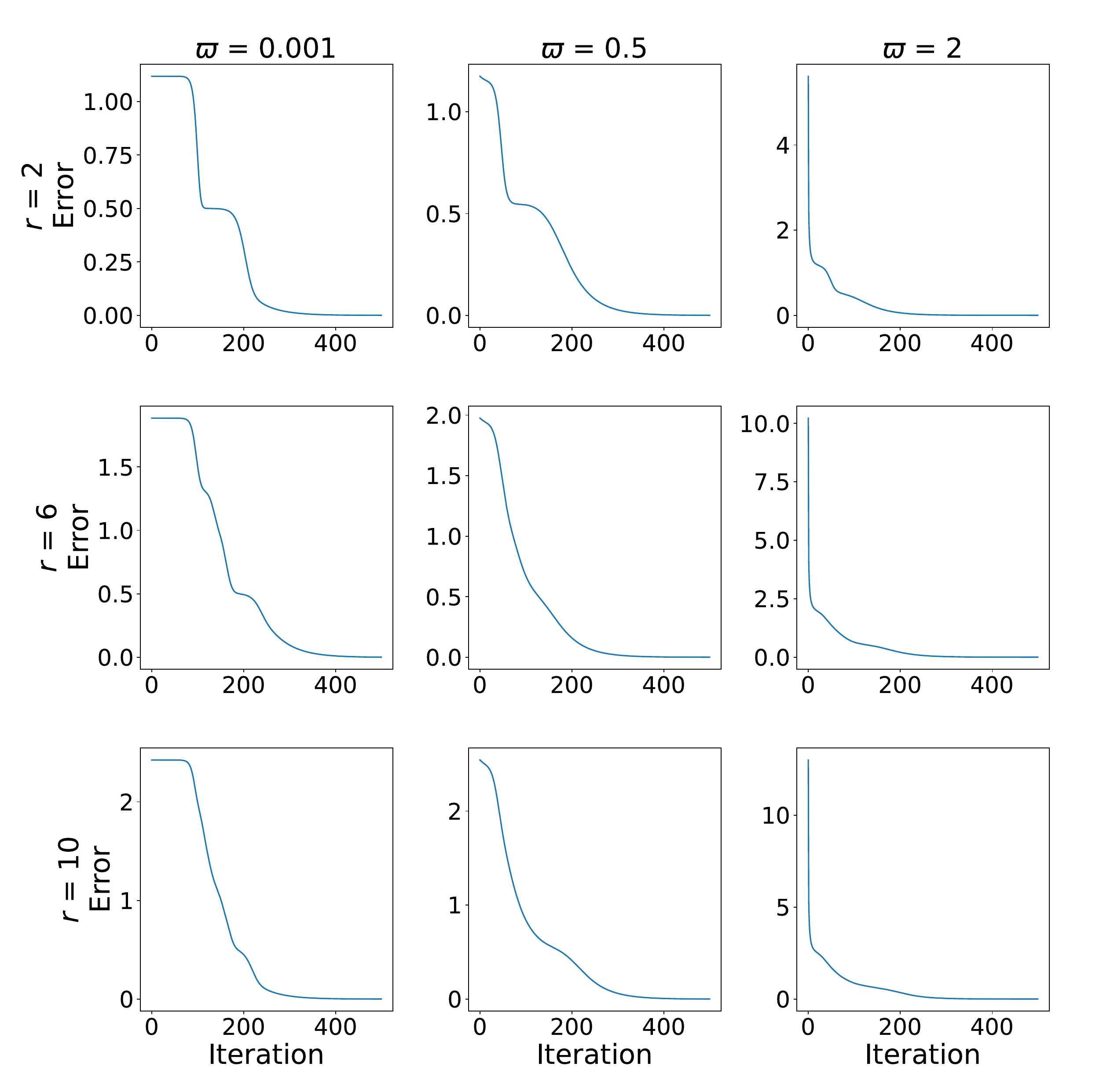}
    \caption{Error curves of GD, measured by $\norm{\bSigma_r-\bX_t\bX_t^\top}_{\rF}$, for general rank matrix approximation. The dimension $d$ is set as 1000. Different rows represent different rank $r$. Different columns represent different initial magnitudes $\varpi$.}
    \label{fig:3}
\end{figure}

\subsection{General rank matrix approximation}

Our second experiment examines general rank matrix approximation, where we fix dimension $d=1000$ and vary the rank $r$ across $\{2,6,10\}$. In addition, for each setting, we examine different initial magnitudes $\varpi\in\{0.001,0.5,2\}$. Our  setting for $\bSigma$ is the same as before, that is, $\bSigma=\diag(\ba,\be)$ with $\ba\in\RR^{r}$ and $\be^{d-r}$ being two arithmetic sequences. We initialize GD using $\bx_0=\varpi\bN_0$ and we compute the GD sequence and the errors $\norm{\bSigma_r-\bX_t\bX_t}_{\rF}$. The results are displayed in Figure~\ref{fig:3}. 

As the results demonstrate, the effects of $\varpi$ is similar to the one in Section \ref{sec:e1}. Moreover, we observe another interesting phenomenon that may need additional explanations. Figure \ref{fig:3} shows that the error curve for larger rank $r$ is smoother than the one for smaller rank $r$. Our explanation is that for larger rank $r$, the differences between successive eigenvalues are smaller. Thus, it is harder to distinguish the associated eigenvectors, and all the eigenvectors may be learned together. As a result, the error curve remains decreasing along the iterations.

\end{document}